\documentclass[12pt, twoside]{article}

\usepackage{amssymb, amsmath, amsthm, graphicx,enumitem,color}
\usepackage[left=1in,top=1in,right=1in]{geometry}

\usepackage{subfigure}

      \newtheorem{theorem}{Theorem}[section]
      \newtheorem{lemma}[theorem]{Lemma}

      \newtheorem{observation}[theorem]{Observation}
      \newtheorem{corollary}[theorem]{Corollary}

      \newtheorem*{partitionVThmEnv}{Theorem \ref{th:partitionV}}
      \newtheorem*{IncRdThmEnv}{Theorem \ref{th:IncRd}}

\theoremstyle{definition}

\theoremstyle{remark}

	\newcommand{\RR}{{\mathbb R}}

\renewcommand{\dim}{\operatorname{dim}}

\newcommand{\RP}{\mathbb{R} \mathbf{P}}
\newcommand{\CC}{\mathbb C}

\newcommand{\pts}{P}

\def\eps{{\varepsilon}}
\newcommand{\qvec}[1]{\textbf{\textit{#1}}}

\def\cp{{C_\text{part}}}
\def\cells{{C_\text{cells}}}
\def\inter{{C_\text{inter}}}
\def\chol{{C_\text{H\"old}}}

\newcommand{\vrts}{\mathcal V}
\newcommand{\curves}{\Gamma}

\newcommand{\ignore}[1]{}

\def\subjclass#1{{\renewcommand{\thefootnote}{}%
\footnote{\emph{Mathematics Subject Classification (2010):} #1}}}
\def\keywords#1{\par\medskip
\noindent\textbf{Keywords.} #1}

\begin{document}

\title{A semi-algebraic version of Zarankiewicz's problem}
\author{Jacob Fox\thanks{Stanford University, Stanford, CA. Supported by a Packard Fellowship, by NSF CAREER award DMS 1352121, and by an Alfred P. Sloan Fellowship. Email: {\tt jacobfox@stanford.edu}.} \and J\'anos Pach\thanks{EPFL, Lausanne and Courant Institute, New York, NY. Supported
by Hungarian Science Foundation EuroGIGA Grant OTKA NN 102029, by Swiss National Science Foundation Grants 200020-144531 and 200021-137574. Email:
{\tt pach@cims.nyu.edu}.}\and Adam Sheffer\thanks{Corresponding author. California Institute of Technology, Pasadena, CA. Supported by Grant 338/09 from the Israel Science Fund and by the Israeli Centers of Research Excellence (I-CORE) program (Center No. 4/11). Email: {\tt adamsh@gmail.com}
.} \and Andrew Suk\thanks{Massachusetts Institute of Technology, Cambridge, MA. Supported by an NSF Postdoctoral
Fellowship and by Swiss National Science Foundation Grant 200021-137574. Email: {\tt asuk@math.mit.edu}.} \and Joshua Zahl\thanks{Massachusetts Institute of Technology, Cambridge, MA. Supported by an NSF Postdoctoral Fellowship. Email: {\tt jzahl@mit.edu}}
}

\date{}

\maketitle

\subjclass{Primary 05D10; Secondary 52C10}

\begin{abstract}
A bipartite graph $G$ is \emph{semi-algebraic in} $\mathbb{R}^d$ if its vertices are represented by point sets $P,Q \subset \mathbb{R}^d$ and its edges are defined as pairs of points $(p,q) \in P\times Q$ that satisfy a Boolean combination of a fixed number of polynomial equations and inequalities in $2d$ coordinates.  We show that for fixed $k$, the maximum number of edges in a $K_{k,k}$-free semi-algebraic bipartite graph $G = (P,Q,E)$ in $\mathbb{R}^2$ with $|P| = m$ and $|Q| = n$ is at most $O((mn)^{2/3} + m + n)$, and this bound is tight. In dimensions $d \geq 3$, we show that all such semi-algebraic graphs have at most $C\left((mn)^{  \frac{d}{d+1} + \eps} + m + n\right)$ edges, where here $\eps$ is an arbitrarily small constant and $C = C(d,k,t,\eps)$. This result is a far-reaching generalization of the classical Szemer\'edi-Trotter incidence theorem. The proof combines tools from several fields: VC-dimension and shatter functions, polynomial partitioning, and Hilbert polynomials.

We also present various applications of our theorem. For example, a general point-variety incidence bound in $\RR^d$, an improved bound for a $d$-dimensional variant of the Erd\H os unit distances problem, and more.

\keywords{Semi-algebraic graph, extremal graph theory, VC-dimension, polynomial partitioning, incidences.}

\end{abstract}

\section{Introduction}

The problem of Zarankiewicz \cite{Za} is a central problem in extremal graph theory. It asks for the maximum number of edges in a bipartite graph which has $m$ vertices in the its first class, $n$ vertices in the second class, and does not contain the complete bipartite graph $K_{k,k}$ with $k$ vertices in each part. In 1954, K\H ov\'ari, S\'os, and Tur\'an \cite{kovari} proved a general upper bound of the form $c_k(mn^{1-1/k}+n)$ edges, where $c_k$ only depends on $k$.
Well-known constructions of Reiman and Brown shows that this bound is tight for $k =  2,3$ (see \cite{pach}).  However, the Zarankiewicz problem for $k \geq 4$ remains one of the most challenging unsolved problems in extremal graph theory.  A recent result of Bohman and Keevash \cite{BK} on random graph processes gives the best known lower bound for $k \geq 5$ and $m=n$ of the form  $\Omega\left(n^{2- 2/(k+1)}(\log k)^{1/(k^2-1)}\right)$.
In this paper, we consider Zarankiewicz's problem for \emph{semi-algebraic}\footnote{A real semi-algebraic set in $\mathbb{R}^{d_1+d_2}$ is the locus of all points that satisfy a given finite Boolean combination of polynomial equations and inequalities in the $d_1+d_2$ coordinates.} bipartite graphs, that is, bipartite graphs where one vertex set is a collection of points in $\RR^{d_1}$, the second vertex set is a collection of points in $\RR^{d_2}$, and edges are defined as \emph{pairs} of points that satisfy a Boolean combination of polynomial equations and inequalities in $d_1+d_2$ coordinates. This framework captures many of the well-studied incidence problems in combinatorial geometry (see, e.g., \cite{PaSh}).

Let $G = (P,Q,E)$ be a semi-algebraic bipartite graph in $(\mathbb{R}^{d_1},\RR^{d_2})$ with $|P| =m$ and $|Q| = n$.  Then there are polynomials $f_1,f_2,\ldots,f_{t} \in \mathbb{R}[x_1,\ldots,x_{d_1+d_2}]$ and a Boolean function $\Phi(X_1,X_2,\ldots,X_{t})$ such that for $(p,q) \in P\times Q \subset \mathbb{R}^{d_1}\times \mathbb{R}^{d_2}$,

\begin{equation*}
(p,q) \in E \hspace{.5cm}\Leftrightarrow \hspace{.5cm}\Phi\big(f_1(p,q) \geq 0,\ldots,f_t(p,q) \geq 0\big) = 1.
\end{equation*}

We say that the edge set $E$ has \emph{description complexity} at most $t$ if $E$ can be described with at most $t$ polynomial equations and inequalities, and each of them has degree at most $t$.  If $G = (P,Q,E)$ is $K_{k,k}$-free, then by the  K\H ov\'ari-S\'os-Tur\'an theorem we know that $|E(G)|=~O(mn^{1-1/k}~+~n)$. However, our main result gives a much better bound if $G$ is semi-algebraic of bounded description complexity.  In particular, we show that Zarankiewicz's problem for semi-algebraic bipartite graphs primarily depends on the dimension.

\begin{theorem}
\label{2dim}
Let $G = (P,Q,E)$ be a semi-algebraic bipartite graph in $(\mathbb{R}^{d_1},\mathbb{R}^{d_2})$ such that $E$ has description complexity at most $t$, $|P| = m$, and $|Q| = n$.  If $G$ is $K_{k,k}$-free, then

\begin{align}
&|E(G)| \leq c_1\left((mn)^{2/3}   + m + n\right)\quad\textrm{for}\ d_1=d_2 = 2,\label{d1d22Bd}\\
&|E(G)| \leq c_2\left((mn)^{d/(d+1)+\eps}+ m + n\right)\quad\textrm{for}\ d_1=d_2 = d,\label{d1d2dBd}
\end{align}
\noindent and more generally,
\begin{align}
&|E(G)| \leq c_3\left(m^{\frac{d_2(d_1-1)}{d_1d_2-1}+\eps}n^{\frac{d_1(d_2-1)}{d_1d_2-1}} + m + n\right)\quad\textrm{for all}\ d_1,d_2.\label{d1d2GenBd}
\end{align}

\noindent Here, $\eps$ is an arbitrarily small constant and $c_1 = c_1(t,k), c_2 = c_2(d,t,k,\eps), c_3 = c_3(d_1,d_2,t,k,\eps)$.
\end{theorem}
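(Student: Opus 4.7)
I would prove the symmetric bound (\ref{d1d2dBd}) by induction on the dimension $d$ with $d=1$ (where Kővári-S\'os-Tur\'an already gives a linear bound) as the base case, recover the cleaner two-dimensional bound (\ref{d1d22Bd}) as the $d=2$ instance of the same argument, and obtain the asymmetric bound (\ref{d1d2GenBd}) by running the argument asymmetrically with respect to $P$ and $Q$. The central tool is polynomial partitioning of one vertex set, combined with a semi-algebraic control of neighborhoods via Milnor-Thom, and a further induction on the dimension that uses Hilbert polynomials to run the partitioning on varieties.

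\textbf{Polynomial partitioning and edge decomposition.} Apply the Guth-Katz partitioning theorem to $Q \subset \mathbb{R}^{d_2}$ at a parameter $r$ to be chosen: one obtains a polynomial $f$ with $\deg f \leq r$ whose zero set $Z(f)$ splits $\mathbb{R}^{d_2}$ into $O(r^{d_2})$ open cells $\Omega_1,\dots,\Omega_s$, each containing $O(n/r^{d_2})$ points of $Q$. Decompose $E(G) = E_0 \sqcup E_1$, where $E_0$ collects the edges incident to $Q_0 := Q \cap Z(f)$ and $E_1$ the remaining edges.

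\textbf{Bounding $E_1$.} For each $p \in P$, the neighborhood $N(p) \subset \mathbb{R}^{d_2}$ is a semi-algebraic set of description complexity $O(t)$, so by a Milnor-Thom estimate its boundary meets at most $O_t(r^{d_2-1})$ cells, each contributing at most $O(n/r^{d_2})$ edges, giving $O(mn/r)$ in total. Any remaining cell that $N(p)$ meets is entirely contained in $N(p)$; the $K_{k,k}$-free hypothesis then forces at most $k-1$ points of $P$ to fully contain any given cell $\Omega_i$ with $|Q\cap \Omega_i|\geq k$, and a simple bookkeeping handles cells containing fewer than $k$ points. This yields $|E_1| = O(mn/r + n)$.

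\textbf{Bounding $E_0$ and closing the induction.} The set $Q_0$ lies on the variety $Z(f)$ of dimension $d_2-1$ and degree at most $r$, so we need an inductive statement for semi-algebraic bipartite graphs whose $Q$-side is constrained to a bounded-degree variety. This is where Hilbert polynomials enter: we perform polynomial partitioning \emph{on} $Z(f)$, counting monomials modulo the ideal of $Z(f)$ to construct a partitioning polynomial whose zero set cuts $Z(f)$ into $O(s^{d_2-1})$ pieces rather than $O(s^{d_2})$. Iterating peels off one dimension at a time and the recursion terminates in dimension one; balancing the parameters across levels produces the target exponent $d/(d+1)+\eps$, where the $\eps$ absorbs the multiplicative slack accumulating through the recursion. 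In two dimensions the recursion bottoms out at a curve on the very first step, and a direct argument closes the count with no $\eps$ loss, giving the clean exponent $2/3$ in (\ref{d1d22Bd}).

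\textbf{Main obstacle.} The hardest component is the partitioning on varieties that drives the induction. One must simultaneously ensure that the partitioning polynomial is not identically zero on $Z(f)$ (controlled by a Hilbert polynomial estimate on the quotient ring), that the Milnor-Thom bound on boundary crossings survives restriction to a variety of controlled degree, and that the description complexity of the induced semi-algebraic system at each inductive level stays bounded so the constants do not blow up. Keeping the $K_{k,k}$-freeness usable through every level of the recursion, while gracefully losing only an $\eps$ in the exponent, is the technical heart of the proof.
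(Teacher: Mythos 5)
Your high-level plan (polynomial partitioning plus an induction on dimension driven by a Hilbert-polynomial partitioning theorem on varieties) matches the paper's philosophy, but the one-shot way you execute it has a genuine gap at the containment term. You claim $|E_1|=O(mn/r+n)$, dismissing cells with fewer than $k$ points of $Q$ as ``simple bookkeeping.'' They are not: a cell containing between $1$ and $k-1$ points of $Q$ can be fully contained in $N(p)$ for \emph{every} one of the $m$ points $p$, and with $\Theta(r^{d_2})$ cells the trivial bound on these edges is $\Theta(kmr^{d_2})$. Even using the fact that the point--cell containment graph is $K_{k,k}$-free (via K\H ov\'ari--S\'os--Tur\'an, or via the shatter-function bound applied to one representative point of $Q$ per cell), one only gets roughly $m\,C^{1-1/k}$ or $m\,C^{1-1/d_2}$ containment pairs with $C=\Theta(r^{d_2})$ cells; for the value of $r$ that makes the crossing term $mn/r$ match the target (e.g.\ $r\sim n^{2/(d+1)}$ when $d_1=d_2=d$, $m=n$), both bounds exceed the claimed main term for every $k\ge 2$. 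This is precisely why the paper does \emph{not} take $r$ polynomially large: in Theorem \ref{moreGeneralThm} the partition parameter $r$ is a large constant, the number of cells is $O(r)=O(1)$, the containment term $I_2$ is crudely bounded by $O_r(k(m+n))$, and the cells are handled by a second induction on $m+n$ rather than a one-shot cell-by-cell count. A second, related gap: your recursion onto $Z(f)$ needs an inductive statement for graphs whose $Q$-side lies on a variety of degree up to $\deg f\le r$, but with $r$ polynomially large this is unavailable --- all constants in the Hilbert-polynomial machinery (Giusti's regularity bound, the lower bound on the leading coefficient, the number and degrees of the irreducible components of $V\cap Z(f)$, and the cell-crossing counts) depend on the degree of the ambient variety, so the claimed partition of $Z(f)$ into $O(s^{d_2-1})$ pieces with uniform constants is unjustified when $\deg f$ grows with $n$. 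The paper's constant-$r$ choice keeps every variety appearing in the dimension induction of constant degree, which is exactly what Theorem \ref{th:partitionV} requires.

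Two further ingredients are missing from your sketch. The shatter-function/VC bound (Corollary \ref{app}), giving $|E|=O(mn^{1-1/d_2}+n)$, is what the paper uses to dispose of the unbalanced ranges $n\ge m^{d_2}$ and $m\ge n^{d_1}$ and to absorb the linear terms; a one-dimensional K\H ov\'ari--S\'os--Tur\'an base case is not a substitute when $k>d_2$. The asymmetric exponents in the general bound come out of a H\"older-inequality accounting of how many cells each set $q\in Q$ properly crosses, not merely from ``running the argument asymmetrically,'' and the clean exponent $2/3$ in the plane is obtained by a genuinely different argument (the cutting lemma of Chazelle et al., a densest cell, and a minimum-degree deletion step), not as the $d=2$ case of the partitioning recursion.
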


\ignore{ 
{\bf Lower bounds.} When $d_1=d_2=2$ or $d_1=d_2=3$ and $m\approx n$, Theorem \ref{2dim} is tight up to the $\eps$ loss in the exponent and to the constant $c$. To see this we set $d=d_1=d_2$, and consider a large positive integer $N$ and the following two sets of integers.
\begin{align*}
P &= \left\{ (p_1,\ldots,p_{d}) \mid 1 \le p_1,\ldots,p_{d-1} \le N \text{ and } 1 \le p_{d} \le N^2 \right\}, \\[2mm]
Q &= \left\{ (q_1,\ldots,q_{d}) \mid 1 \le q_1,\ldots,q_{d-1} \le N \text{ and } -N^2 \le q_{d} \le dN^2 \right\}.
\end{align*}
Notice that we have $m=|P| = N^{d+1}$ and $n=|Q| = \Theta_d (N^{d+1})$.
Let $G$ be the semi-algebraic graph defined by the functions
\begin{align*}
f_1(p,q)& =(p_{d}-q_{d})-\left((p_1-q_1)^2+\ldots+ (p_{d-1}-q_{d-1})^2\right), \\[2mm]
f_2(p,q) &= -f_1(p,q).
\end{align*}
For every $p\in P$, there at least $N^{d-1}$ points $q\in Q$ with $f_1(p,q)=f_2(p,q)=0$. Thus, the number of edges of the graph $G$ is $\Omega(N^{2d}) = \Omega\left( (mn)^{d/(d+1)}\right)$. We can verify that the graph $G$ has no $K_{d+1,d+1}$; when $d=2$, this is just the statement that two points define a unit parabola (i.e.,~a translate of the graph of the function $y=x^2$), and when $d=3$, three points define a unit paraboloid. Thus, the graph $G$ satisfies the hypothesis of Theorem \ref{2dim}. When $d_1\neq d_2$, or when $d_1=d_2=d$ and $d>3$, there are no known lower bounds to match the upper bounds from Theorem \ref{2dim}. }

To prove the theorem, we combine ideas from the study of VC-dimension with ideas from incidence theory. In the latter, we rely on the concept of polynomial partitioning (as introduced by Guth and Katz \cite{GK15}) and combine it with a technique that relies on Hilbert polynomials. Recently, similar polynomial partitioning techniques were also studied by Matou\v{s}ek and Safernov\'a \cite{MS14} and Basu and Sombra \cite{BasuSombra14}. However, each of the three papers presents different proofs and very different results.

The planar case of Theorem \ref{2dim} (i.e., \eqref{d1d22Bd})
is a generalization of the famous Szemer\'edi-Trotter point-line theorem \cite{ST83}.
Indeed, in the case of $d_1=d_2=2$, by taking $P$ to be the point set, $Q$ to be the dual of
the lines, and the relationship to be the incidence relationship, we get
that $G$ is $K_{2,2}$-free as two distinct lines intersect in at most one point. As we will see below, there
are many further applications of Theorem \ref{2dim}.

\medskip

\textbf{Previous work and lower bounds.}  Several authors have studied this extremal problem in a more restricted setting: on bounding the number of incidences between an $m$-element point set $P$ and a set of $n$ hyperplanes $H$ in $\mathbb{R}^d$ where no $k$ points of $P$ lies on $k$ hyperplanes of $H$.  Since each hyperplane $h\subset \mathbb{R}^d$ dualizes\footnote{Given a hyperplane $h = \{(x_1,\ldots,x_d): a_1x_1 + \cdots + a_dx_d = 1\}$ in $\mathbb{R}^d$, the dual of $h$ is the point $h^{\ast} = (a_1,\ldots,a_d)$.} to a point in $\mathbb{R}^d$, this problem is equivalent to determining the maximum number of edges in a $K_{k,k}$-free semi-algebraic bipartite graph $G = (P,Q,E)$ in $(\mathbb{R}^d,\mathbb{R}^d)$, where $(p,q) \in E$ if and only if $\langle p,q\rangle =1$.  In this special case, the works of Chazelle \cite{chaz}, Brass and Knauer \cite{brass}, and Apfelbaum and Sharir \cite{sharir} implies that $|E(G)| \leq c'((mn)^{  \frac{d}{d+1}} +  m + n)$, where $c'$ depends only of $k$ and $d$.

On the other hand,  Brass and Knauer \cite{brass} gave a construction of an $m$-element point set $P$ and a set of $n$ hyperplanes $H$ in $\mathbb{R}^3$, with no $k$ points from $P$ lying on $k$ hyperplanes of $H$, with at least $\Omega((mn)^{7/10})$ incidences. For any $d\ge 4$ and $\varepsilon>0$, Sheffer \cite{Sheffer15} presented a construction of an $m$-element point set $P$ and a set of $n=\Theta(m^{(3-3\varepsilon)/(d+1)})$ hyperplanes $H$ in $\mathbb{R}^d$, with no two points from $P$ lying on $(d-1)/\eps$ hyperplanes of $H$, with $\Omega((mn)^{1-\frac{2}{d+4}-\eps})$ incidences.
These are the best known lower bounds for Theorem \ref{2dim} that we are aware of. Notice that gap between these bounds and the upper bound of \eqref{d1d2dBd} becomes rather small for large values of $d$.
\medskip

\textbf{Applications.} After proving Theorem \ref{2dim}, we provide a variety of applications.
First, we show how a minor change in our proof leads to the following general incidences bound.

\begin{theorem} \label{th:IncRd}
Let $\pts$ be a set of $m$ points and let $\vrts$ be a set of $n$ constant-degree algebraic varieties, both in $\RR^d$, such that the incidence graph of $\pts\times\vrts$ does not contain a copy of $K_{s,t}$ (here we think of $s,t,$ and $d$ as being fixed constants, and $m$ and $n$ are large). Then for every $\eps>0$, we have
$$I(\pts,\vrts) = O\left(m^{\frac{(d-1)s}{ds-1}+\eps}n^{\frac{d(s-1)}{ds-1}}+m+n\right).$$
\end{theorem}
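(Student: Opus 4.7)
The plan is to adapt the proof of Theorem~\ref{2dim} with essentially one change: inside each cell of a polynomial partition of $\pts$, invoke the K\H ov\'ari--S\'os--Tur\'an bound tailored to the asymmetric $K_{s,t}$-free condition rather than the $K_{k,k}$-version. The target exponents are precisely what \eqref{d1d2GenBd} would yield upon setting $d_1=d$ and $d_2=s$; heuristically $s$ plays the role of the ``effective dimension of the variety side'' because a $K_{s,t}$-free bipartite graph on parts of sizes $m,n$ has at most $O(mn^{1-1/s}+n)$ edges, and the exponent $1-1/s$ is exactly what drives the $d_2=s$ behaviour in the calculation behind \eqref{d1d2GenBd}. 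A naive reduction that parameterizes each variety as a point in its (possibly high-dimensional) coefficient space and then calls \eqref{d1d2GenBd} as a black box would force $d_2$ much larger than $s$ and yield a bound far weaker than claimed, so the proof has to be replayed rather than invoked.

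Concretely, I apply the Guth--Katz polynomial partitioning theorem to $\pts$ with a polynomial $f\in\RR[x_1,\ldots,x_d]$ of degree $r$, obtaining $O(r^d)$ open cells $\Omega_i$, each containing $O(m/r^d)$ points of $\pts$. Write $P_i = \pts\cap\Omega_i$ and $\mathcal{V}_i = \{V\in\vrts : V\cap\Omega_i\neq\emptyset\}$. We may assume every $V\in\vrts$ has codimension at least one, and then a B\'ezout-type estimate gives that $V$ meets $O(r^{d-1})$ cells, so $\sum_i|\mathcal{V}_i|=O(nr^{d-1})$. Because the induced bipartite graph on $P_i\times\mathcal{V}_i$ inherits the $K_{s,t}$-free hypothesis, K\H ov\'ari--S\'os--Tur\'an yields $I(P_i,\mathcal{V}_i)=O(|P_i|\cdot|\mathcal{V}_i|^{1-1/s}+|\mathcal{V}_i|)$. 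Combining $|P_i|\leq m/r^d$ with H\"older (conjugate exponents $(s/(s-1),s)$ applied to $\sum_i|\mathcal{V}_i|^{1-1/s}$ over $O(r^d)$ summands of total mass $O(nr^{d-1})$), the total cellular contribution is $O\bigl(mn^{(s-1)/s}r^{-(s-1)/s}+nr^{d-1}\bigr)$. Choosing $r=m^{s/(ds-1)}n^{-1/(ds-1)}$ balances the two summands and produces $O\bigl(m^{s(d-1)/(ds-1)}n^{d(s-1)/(ds-1)}\bigr)$; the degenerate regimes in which this $r$ falls outside $[1,m^{1/d}]$ are absorbed in the additive $O(m+n)$ term.

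The hard part, and the source of the $\eps$ loss, is handling incidences on the zero set $Z(f)$. Split $\vrts$ into those varieties contained in some irreducible component of $Z(f)$ and those not; the latter meet $Z(f)$ only in a variety of dimension $\leq d-2$, and their contribution is absorbed by an induction on the ambient dimension. Varieties contained in $Z(f)$ force a genuine recursion: we restrict the incidence problem to the hypersurface $Z(f)$, still a $K_{s,t}$-free semi-algebraic incidence structure, and apply the theorem inductively there. Exactly as in the proof of Theorem~\ref{2dim}, the Hilbert-polynomial and constant-degree-stratification machinery developed in that proof is what permits the recursion to proceed cleanly despite the singularities of $Z(f)$; the standard geometric-series bookkeeping of the recursion depth then costs an arbitrarily small $\eps$ in the exponent of $m$. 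No genuinely new ingredient beyond Theorem~\ref{2dim} is needed: replacing the $K_{k,k}$-version of K\H ov\'ari--S\'os--Tur\'an inside cells by its $K_{s,t}$-version is precisely the ``minor change'' promised in the paragraph preceding the statement.
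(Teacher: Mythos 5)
Your cell-by-cell computation is sound (the K\H ov\'ari--S\'os--Tur\'an bound inside cells, the crossing estimate $\sum_i|\mathcal{V}_i|=O(nr^{d-1})$, and the H\"older step do produce the target exponents after balancing), and you correctly identified the ``minor change'': replacing Corollary \ref{app} by the $K_{s,t}$-version of K\H ov\'ari--S\'os--Tur\'an, which preserves \eqref{eq:nElim} though not \eqref{eq:mElim}. The genuine gap is in the part you yourself call the hard part: the points on $Z(f)$. Because you use a single partitioning polynomial of degree $r=m^{s/(ds-1)}n^{-1/(ds-1)}$, the hypersurface $Z(f)$ has degree $\poly(m,n)$, while every tool you propose to invoke on it --- Theorem \ref{th:partitionV}, the count $\gamma_1$ of irreducible components of $V\cap Z(f)$, the bound on how many cells a variety can cross, and the inductive constants $\alpha_{1,e}$ in the statement you would recurse into --- has ``constants'' depending on the degree of the ambient variety. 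With that degree growing polynomially in $m$ and $n$, the recursion does not close and the claimed bound does not follow. In particular, all $m$ points may lie on $Z(f)$, and their incidences with the varieties of $\vrts$ \emph{not} contained in $Z(f)$ are precisely a lower-dimensional incidence problem on a hypersurface of enormous degree; asserting that this is ``absorbed by an induction on the ambient dimension'' presupposes an incidence theorem on high-degree hypersurfaces that neither you nor the paper proves. This second-level degree blow-up is the known obstruction to the one-shot large-degree partition (it is what Zahl and Basu--Sombra had to work around in $d=3,4$), and it is exactly what the paper's scheme is designed to avoid.

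The paper's actual proof keeps $r$ a large \emph{constant} and runs the double induction of Theorem \ref{moreGeneralThm}/\ref{varietiesInductThm}: an outer (wasteful) induction on the dimension $e$ of a constant-degree irreducible variety $V$ containing $\pts$, and an inner induction on $m+n$; the $\eps$ arises when closing the inner induction, from the factor $r^{-\eps}$ beating the H\"older constant $\chol$ --- not from any ``recursion-depth bookkeeping'' over dimensions. The incidences are split into three parts: $I_3$ (points in open cells), handled as you do but with constant $r$ and the induction hypothesis rather than a one-shot balance; $I_1$ (points on $Z(f)$ incident to varieties properly intersecting every component of $V\cap Z(f)$ through them), handled by the dimension induction applied to the constant-degree components of $V\cap Z(f)$; and $I_2$ (pairs where a component of $V\cap Z(f)$ is fully contained in the variety), bounded directly from the $K_{s,t}$-free condition, since each component either contains at most $s$ points or is contained in at most $t$ members of $\vrts$. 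To repair your argument you would need to adopt this constant-degree structure; as written, the large-degree partition plus the appeal to the Hilbert-polynomial machinery ``exactly as in Theorem \ref{2dim}'' is not a valid step.
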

Theorem \ref{th:IncRd} subsumes many known incidences results (up to the extra $\eps$ in the exponent), and extends them to $\RR^d$ (see Section \ref{applicationSection}). When $s=2$, the theorem is tight up to subpolynomial factors (see \cite{Sheffer15}).
We also derive an improved bound for a $d$-dimensional variant of the Erd\H os unit distances problem, a bound for incidences between points and tubes, and more.
\medskip

\textbf{Organization.}  In Section \ref{vcdim}, we give an upper bound on the maximum number of edges in a $K_{k,k}$-free bipartite graph with bounded VC-dimension.  In Section \ref{warmup}, we establish the bound \eqref{d1d22Bd} from Theorem \ref{2dim}. Then in Section \ref{polyMethodSection}, we prove the bounds \eqref{d1d2dBd} and \eqref{d1d2GenBd} from Theorem \ref{2dim}. The parts of this proof that concern Hilbert polynomials are deferred to Section \ref{sec:hilbert}.
In Section \ref{applicationSection}, we discuss applications of Theorem \ref{2dim}.
Finally, Section \ref{sec:disc} consists of a brief discussion concerning the tightness of our results.
\medskip

\textbf{Acknowledgments.}
The authors would like to thank Saugata Basu for comments and corrections to an earlier version of this manuscript and G\'abor Tardos for many valuable discussions. Work on this paper was performed while the authors were visiting the Institute for Pure and Applied Mathematics (IPAM), which is supported by the National Science Foundation.

\section{VC-dimension and shatter functions}\label{vcdim}

Given a bipartite graph $G = (P,Q,E)$ where $E\subset P\times Q$, for any vertex $q \in Q$, let $N_G(q)$ denote the neighborhood of $q$ in $G$, that is, the set of vertices in $P$ that are connected to $q$.  Then let $\mathcal{F} = \{N_G(q) \subset P: q \in Q\}$ be a set system with ground set $P$.  The dual of $(P,\mathcal{F})$ is the set system obtained by interchanging the roles of $P$ and $\mathcal{F}$, that is, it is the set system $(\mathcal{F},\mathcal{F}^{\ast})$, where $\mathcal{F}$ is the ground set and $\mathcal{F}^{\ast} = \{\{A\in \mathcal{F}: p\in A\}:p \in P\}$.  Obviously, $(\mathcal{F}^{\ast})^{\ast} = \mathcal{F}$.

The {\em Vapnik-Chervonenkis dimension} (in short, VC-dimension) of $(P,\mathcal{F})$ is the {\em largest} integer $d_0$ for which there exists a $d_0$-element set $S\subset P$ such that for every subset $B\subset S$, one can find a member $A\in \mathcal{F}$ with $A\cap S=B$.  The \emph{primal shatter function} of $(P,\mathcal{F})$ is defined as

\begin{equation*}
\pi_{\mathcal{F}}(z) = \max\limits_{P'\subset P, |P'| = z}|\{A\cap P':A \in \mathcal{F}\}|.
\end{equation*}

\noindent In other words, $\pi_{\mathcal{F}}(z)$ is a function whose value at $z$ is the maximum possible number of distinct intersections of the sets of $\mathcal{F}$ with a $z$-element subset of $P$.  The primal shatter function of $\mathcal{F}^{\ast}$ is often called the \emph{dual shatter function} of $\mathcal{F}$.

The VC-dimension of $\mathcal{F}$ is closely related to its shatter functions.  A result of Sauer and Shelah states that if $\mathcal{F}$ is a set system with VC-dimensions $d_0$, then

\begin{equation} \label{eq:SauerShelah}
\pi_{\mathcal{F}}(z) \leq   \sum_{i=0}^{d_0}{z\choose i}.
\end{equation}

\noindent On the other hand, suppose that the primal shatter function of $\mathcal{F}$ satisfies $\pi_{\mathcal{F}}(z) \leq cz^d$ for all $z$.  Then, if the VC-dimension of $\mathcal{F}$ is $d_0$, we have $2^{d_0} \leq c(d_0)^d$, which implies $d_0 \leq 4d\log (cd)$.

Most of this section is dedicated to proving the following result.

\begin{theorem}
\label{weak}

Let $G = (P,Q,E)$ be a bipartite graph with $|P| = m$ and $|Q| = n$ such that the set system $\mathcal{F}_1 = \{N(q): q\in Q\}$ satisfies $\pi_{\mathcal{F}_1}(z) \leq cz^d$ for all $z$.  Then, if $G$ is $K_{k,k}$-free, we have

\begin{equation*}
|E(G)| \leq c_1(mn^{1 - 1/d} + n),
\end{equation*}

\noindent where $c_1 = c_1(c,d,k)$.
\end{theorem}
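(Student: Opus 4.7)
The argument goes via a dyadic decomposition on degree combined with random sampling. For each $q \in Q$ write $d_q = |N_G(q)|$, and set $n_t := |\{q \in Q : d_q \geq t\}|$ so that $|E(G)| = \sum_{t \geq 1} n_t$. The trivial bound is $n_t \leq n$, but I aim to show the polynomial decay estimate $n_t \leq C_0 (m/t)^d$ for every $t$ above a constant multiple of $k$; substituting this into the degree sum and splitting at the crossover with $n$ then yields the claimed bound.

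To prove the decay estimate, I would pick a subset $S \subseteq P$ uniformly at random of size $s = \lceil 8km/t \rceil$. For each $q$ with $d_q \geq t$, the trace size $|N_G(q) \cap S|$ is hypergeometric with mean $\mu_q = sd_q/m \geq 8k$ and variance at most $\mu_q$, so Chebyshev's inequality gives $\Pr[|N_G(q) \cap S| \geq k] \geq 1/2$. By the first moment method there therefore exists a deterministic $S$ for which $Q_S := \{q \in Q : d_q \geq t \text{ and } |N_G(q) \cap S| \geq k\}$ has size at least $n_t/2$. On the other hand, $\mathcal{F}_1$ restricted to $S$ has at most $\pi_{\mathcal{F}_1}(s) \leq cs^d$ distinct traces, and the $K_{k,k}$-free hypothesis forces each trace of size at least $k$ to be realized by fewer than $k$ vertices of $Q$ (otherwise those vertices, together with any $k$-subset of the common trace, would form a $K_{k,k}$). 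Combining these two estimates gives $n_t/2 \leq (k-1)cs^d$, hence $n_t \leq 2(k-1)\,c\,(8k)^d (m/t)^d$, as needed.

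To conclude, split $|E(G)| = \sum_{t \geq 1} n_t$ at the critical scale $t_0 := (C_0/n)^{1/d} m$, where $C_0 := 2(k-1)c(8k)^d$ is the constant from the decay estimate. The range $t \leq t_0$ contributes at most $n t_0 = O(m n^{1-1/d})$, while for $d \geq 2$ the tail $\sum_{t > t_0} C_0 (m/t)^d$ converges to $O(C_0 m^d t_0^{1-d}) = O(m n^{1 - 1/d})$; the initial stretch $t \leq O(k)$, where the sampling bound is vacuous because $s$ would exceed $m$, contributes only $O(kn)$, which is absorbed into the $O(n)$ term. I expect the main obstacle to be the concentration step: the bound $\Pr[|N_G(q) \cap S| \geq k] \geq 1/2$ must survive the integer-valued nature of $k$, which is what forces the scaling $s$ proportional to $km/t$ and in turn determines the polynomial dependence of $c_1 = c_1(c,d,k)$ on $k$.
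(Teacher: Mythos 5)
Your proposal is correct, but it takes a genuinely different route from the paper. The paper proves Theorem \ref{weak} by first establishing a Chazelle-type packing lemma (Lemma \ref{packing}) for $(k,\delta)$-separated set systems, whose proof runs through Haussler's bound on the unit-distance graph of a bounded-VC set system and a Tur\'an-type weighting argument; it then finds a $k$-tuple of vertices of $Q$ crossed by few neighborhoods, deduces that some vertex of $Q$ has degree $O(m/n^{1/d})+k$, and iterates by vertex removal. You instead bound the upper tail of the degree sequence directly: setting $n_t=|\{q: d_q\ge t\}|$, a random $s$-set with $s\approx km/t$, Chebyshev for the hypergeometric trace sizes, the primal shatter bound $\pi_{\mathcal{F}_1}(s)\le cs^d$, and the observation that $K_{k,k}$-freeness forces each trace of size $\ge k$ to have multiplicity $<k$, together give $n_t=O_{c,d,k}((m/t)^d)$, and summing the tail yields the theorem. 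All the steps check out (the hypergeometric variance is at most the mean, so $\mu_q\ge 8k$ indeed gives success probability at least $1/2$, and the trace-multiplicity argument is exactly a $K_{k,k}$). What your argument buys is self-containedness and simplicity: it avoids Haussler's lemma and the separated-packing machinery entirely, incurs no logarithmic loss (unlike the $\eps$-net shortcut the paper mentions), and produces the degree-tail estimate $n_t\lesssim (m/t)^d$, which is slightly more refined information than the final count; what the paper's route buys is the packing lemma itself, a statement of independent interest. One shared caveat: your tail summation requires $d\ge 2$ (for $d=1$ it produces an extra $\log n$ factor), but the paper's final summation $\sum_{i} i^{-1/d}$ has exactly the same feature, and in all applications $d=d_2\ge 2$, so this does not distinguish the two arguments.
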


Let $f_1, \ldots, f_{\ell}$ be $d$-variate real polynomials with respective zero-sets $V_1, \ldots, V_{\ell}$; that is, $V_i = \{x \in \mathbb{R}^d: f_i(x) = 0\}$.  A vector $\sigma\in \{-1, 0, +1\}^{\ell}$ is a {\it sign pattern} of $f_1, \ldots, f_{\ell}$ if there exists an $x \in \mathbb{R}^d$ such that the sign of $f_j(x)$ is $\sigma_j$ for all $j = 1,\ldots, {\ell}$. The Milnor-Thom theorem (see \cite{basu, milnor, thom}) bounds the number of cells in the arrangement of the zero-sets $V_1, \ldots, V_{\ell}$ and, consequently, the number of possible sign patterns.

\begin{theorem}[Milnor-Thom]\label{milnor}
Let $f_1,\ldots,f_{\ell}$ be $d$-variate real polynomials of degree at most $t$.  The number of cells in the arrangement of their zero-sets $V_1,\ldots,V_{\ell}\subset \mathbb{R}^d$ and, consequently, the number of sign patterns of $f_1,\ldots,f_{\ell}$ is at most

\begin{equation*}
\left(\frac{50t\ell}{d}\right)^d
\end{equation*}

\noindent for $\ell \geq d \geq 2$.
\end{theorem}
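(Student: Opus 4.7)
The plan is to bound the number of cells of the arrangement (connected components of $\mathbb{R}^d \setminus \bigcup_i V_i$) via a Morse-theoretic critical-point count combined with Bezout's theorem; the bound on sign patterns then follows by applying the same scheme to the subarrangements induced on each stratum $\bigcap_{i \in S} V_i$ and combining the estimates through a union bound.

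Step one is a generic perturbation: replace $f_i$ by $\tilde{f}_i = f_i - \epsilon_i$ for small generic $(\epsilon_1,\ldots,\epsilon_\ell) \in \mathbb{R}^\ell$. By Sard's theorem, the new zero sets $\tilde{V}_i$ are smooth hypersurfaces and any $k$ of them meet transversally in a smooth codimension-$k$ submanifold. A standard semi-algebraic continuity argument shows that the cell count of the perturbed arrangement is at least that of the original, so one may assume from the outset that the arrangement is in general position.

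Step two is the Morse count. Fix a generic $a \in \mathbb{R}^d$ and a large radius $R$ (large enough to enclose all relevant geometry), and consider the Morse function $h(x) = \|x-a\|^2$. Each cell $C$ of the arrangement that meets $B_R$ contains a minimum of $h$ on $\overline{C}$; this minimum lies on some stratum $\bigcap_{i \in S} \tilde{V}_i$ with $|S| \leq d$ and is a critical point of $h$ restricted to that stratum. By the Lagrange multiplier criterion, such critical points are the solutions of the polynomial system
$$\tilde{f}_i(x) = 0 \quad (i \in S), \qquad \nabla h(x) = \sum_{i \in S} \lambda_i \nabla \tilde{f}_i(x),$$
which consists of $|S|+d$ equations of degree at most $t$ in the $|S|+d$ unknowns $(x,\lambda)$. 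By Bezout's theorem (together with the fact that the $\lambda_i$'s are determined by $x$ on the transverse locus), the number of solutions for a fixed $S$ is at most $(ct)^d$ for an absolute constant $c$.

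Summing over all subsets $S$ of size $k = 0, 1, \ldots, d$ yields
$$\sum_{k=0}^{d} \binom{\ell}{k}(ct)^d \leq (ct)^d \left(\frac{e\ell}{d}\right)^d \leq \left(\frac{50 t \ell}{d}\right)^d,$$
using the standard estimate $\sum_{k=0}^{d} \binom{\ell}{k} \leq (e\ell/d)^d$ valid for $\ell \geq d$, and choosing the constants so that $ce \leq 50$. The main obstacle I anticipate is the handling of unbounded cells and cells that touch $\partial B_R$: the cleanest remedy is to compactify $\mathbb{R}^d$ to $S^d$ via stereographic projection, which doubles the degrees of the $f_i$'s but leaves the form of the final bound unchanged (only the implicit constants move). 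A secondary technicality is the rigorous Bezout count on non-complete-intersection strata, which is handled by a further infinitesimal perturbation and a limiting argument that puts the system into generic position before applying the intersection-theoretic count.
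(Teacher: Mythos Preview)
The paper does not prove this statement: Theorem~\ref{milnor} is quoted as a black box from the literature (Milnor, Thom, and Basu--Pollack--Roy), and only its consequences are used. So there is no ``paper's own proof'' to compare against.

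That said, your sketch is exactly the classical Milnor--Thom argument: perturb to general position, use a distance-squared Morse function, charge each cell to a critical point on some stratum $\bigcap_{i\in S}\tilde V_i$, and bound critical points on each stratum via B\'ezout. The summation $\sum_{k\le d}\binom{\ell}{k}(ct)^d\le (e\ell/d)^d(ct)^d$ is the right way to arrive at a bound of the shape $(C t\ell/d)^d$. Two small cautions: (i) the B\'ezout count in the Lagrange system has $d+|S|$ equations whose degrees are a mix of $t$ (for $\tilde f_i=0$) and $t-1$ or $1$ (for the gradient conditions), so you should track the product of degrees rather than writing a blanket $(ct)^d$---Milnor's original version actually gives $t(2t-1)^{d-1}$ per stratum, and one has to be slightly careful to land on the constant $50$; (ii) the ``compactify via stereographic projection'' fix for unbounded cells is standard, but an easier route is to intersect with a large generic sphere and add it to the arrangement, which keeps you in $\mathbb{R}^d$. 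None of this is a genuine gap; your outline is the correct one.
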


We have the following consequence of Theorems \ref{weak} and \ref{milnor}.

\begin{corollary}\label{app}
Let $G = (P,Q,E)$ be a bipartite semi-algebraic graph in $(\mathbb{R}^{d_1},\RR^{d_2})$ with $|P| = m$ and $|Q| = n$ such that $E$ has complexity at most $t$.  If $G$ is $K_{k,k}$-free, then

\begin{equation*}
|E(G)| \leq c'(mn^{1- 1/d_2} + n),
\end{equation*}

\noindent where $c' = c'(d_1,d_2,t,k)$.

\end{corollary}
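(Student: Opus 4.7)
The plan is to verify the hypothesis of Theorem \ref{weak} for the set system $\mathcal{F}_1 = \{N_G(q):q\in Q\}$, with exponent $d=d_2$, and then invoke that theorem directly. So the task reduces to showing that there is a constant $C=C(d_1,d_2,t)$ with $\pi_{\mathcal{F}_1}(z)\le C z^{d_2}$ for all $z\ge 1$.

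To bound the primal shatter function, I would fix an arbitrary subset $P'\subset P$ with $|P'|=z$ and count the number of distinct traces $N_G(q)\cap P'$ that can occur as $q$ ranges over $\mathbb{R}^{d_2}$. By the definition of description complexity, there exist polynomials $f_1,\dots,f_t\in\mathbb{R}[x_1,\dots,x_{d_1+d_2}]$ of degree at most $t$ and a Boolean function $\Phi$ such that $(p,q)\in E$ is determined by the sign pattern of $(f_1(p,q),\dots,f_t(p,q))$. For each $p\in P'$, substituting the (fixed) coordinates of $p$ into $f_1,\dots,f_t$ yields $t$ polynomials in the $d_2$ variables of $q$, each of degree at most $t$. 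Ranging over all $p\in P'$ produces a family of $zt$ polynomials on $\mathbb{R}^{d_2}$. Whenever two points $q,q'\in\mathbb{R}^{d_2}$ induce the same sign vector on this family of $zt$ polynomials, they produce the same trace $N_G(q)\cap P'=N_G(q')\cap P'$, because the Boolean formula $\Phi$ evaluates the same way for every $p\in P'$.

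By the Milnor-Thom bound (Theorem \ref{milnor}), the number of sign patterns of $zt$ polynomials of degree $\le t$ on $\mathbb{R}^{d_2}$ is at most $\bigl(50t\cdot zt/d_2\bigr)^{d_2}=C z^{d_2}$, where $C=C(d_2,t)$ (with a trivial adjustment when $zt<d_2$). Hence $\pi_{\mathcal{F}_1}(z)\le C z^{d_2}$, which is exactly the shatter-function hypothesis of Theorem \ref{weak} with exponent $d_2$. Applying that theorem to the $K_{k,k}$-free graph $G$ gives $|E(G)|\le c'(mn^{1-1/d_2}+n)$, with $c'$ depending on $d_1,d_2,t$, and $k$, as desired.

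The only substantive step is the shatter-function bound, and the only subtlety there is the bookkeeping: one must remember that the sets in $\mathcal{F}_1$ are indexed by points $q$ living in $\mathbb{R}^{d_2}$ (not $\mathbb{R}^{d_1}$), so Milnor-Thom must be applied with ambient dimension $d_2$. This is what produces the asymmetric exponent $1-1/d_2$ in the conclusion, and it is essentially the only place the distinction between $d_1$ and $d_2$ enters the argument.
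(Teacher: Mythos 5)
Your proposal is correct and follows essentially the same route as the paper: reduce to the shatter-function hypothesis of Theorem \ref{weak} with exponent $d_2$, then bound $\pi_{\mathcal{F}_1}(z)$ by applying the Milnor--Thom theorem to the $zt$ polynomials $f_i(p,\cdot)$ of degree at most $t$ on $\mathbb{R}^{d_2}$, exactly as in the paper's proof (which phrases the same count via the dual of $\mathcal{F}_2$ and obtains the bound $\left(\frac{50t^2z}{d_2}\right)^{d_2}$). No gaps.
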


\begin{proof}

Let $\mathcal{F}_1 = \{N(q): q\in Q\}$ and $\mathcal{F}_2 = \{N(p): p \in P\}$.  By Theorem \ref{weak}, it suffices to show that $\pi_{\mathcal{F}_1}(z) \leq cz^{d_2}$ for all $z$ and a constant $c = c(d_1,d_2,t,k)$.

Since $E$ is semi-algebraic, there are polynomials $f_1,\ldots,f_t$ and a Boolean formula $\Phi$ such that for $(p,q) \in P\times Q$,

\begin{equation*}
(p,q) \in E \hspace{.5cm}\Leftrightarrow \hspace{.5cm}\Phi(f_1(p,q)\geq 0,\ldots,f_t(p,q)\geq 0) = 1.
\end{equation*}

\noindent Notice the \emph{dual} of $\mathcal{F}_2$ is isomorphic to the set system $\mathcal{F}_1$.  Since any set of $z$ points $p_1,\ldots,p_z \in P$ corresponds to $z$ semi-algebraic sets $Z_1,\ldots,Z_z\subset \RR^{d_2}$ such that $Z_i = \{x \in \mathbb{R}^{d_2}: \Phi(f_1(p_i,x)\geq 0,\ldots,f_t(p_i,x) \geq 0) = 1\}$ and $N_G(p_i) = Q \cap Z_i$, by the Milnor-Thom theorem we have

\begin{equation*}
\pi_{\mathcal{F}_1}(z) = \pi_{\mathcal{F}^{\ast}_2}(z) \leq \left(\frac{50t^2z}{d_2}\right)^{d_2}.
\end{equation*}

\noindent This completes the proof of Corollary \ref{app}.

\end{proof}

The rest of this section is devoted to proving Theorem \ref{weak}, which requires the following lemmas.  Let $(P,\mathcal{F})$ be a set system on a ground set $P$.  The \emph{distance} between two sets $A_1,A_2 \in \mathcal{F}$ is $|A_1\bigtriangleup A_2|$, where $A_1 \bigtriangleup A_2 = (A_1\cup A_2)\setminus (A_1\cap A_2)$ is the symmetric difference of $A_1$ and $A_2$.  The \emph{unit distance graph} $UD(\mathcal{F})$ is the graph with vertex set $\mathcal{F}$, and its edges are pairs of sets $(A_1,A_2)$ that have distance one.  We will use the following result of Haussler.

\begin{lemma}[\cite{haus}]\label{unit}
If $\mathcal{F}$ is a set system of VC-dimension $d_0$ on a ground set $P$, then the unit distance graph $UD(\mathcal{F})$ has at most $d_0|\mathcal{F}|$ edges.

\end{lemma}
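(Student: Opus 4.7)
The plan is to prove this inequality via a shifting (compression) reduction that reduces the problem to the case of a downward-closed family, where a simple counting argument yields the bound.

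First, for each element $x \in P$ I would introduce the shift operator $S_x$ on $\mathcal{F}$: for each set $A \in \mathcal{F}$ containing $x$, replace $A$ by $A \setminus \{x\}$ if $A \setminus \{x\} \notin \mathcal{F}$, and otherwise leave $A$ alone; sets not containing $x$ are unchanged. The standard (easy) verifications show that $S_x$ preserves cardinality, i.e.\ $|S_x(\mathcal{F})| = |\mathcal{F}|$, and does not increase the VC-dimension (any shattered subset of the shifted family lifts to a shattered subset of the same size in the original). Applying every $S_x$ repeatedly until the family is fixed under all shifts yields a downward-closed family $\mathcal{F}^\ast$ (a simplicial complex) with $|\mathcal{F}^\ast| = |\mathcal{F}|$ and VC-dimension at most $d_0$.

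Second, I would establish the key monotonicity property: passing from $\mathcal{F}$ to $S_x(\mathcal{F})$ does not decrease the number of unit-distance edges. This is proved by constructing an injection $E(UD(\mathcal{F})) \hookrightarrow E(UD(S_x(\mathcal{F})))$. For an edge $(A_1, A_2)$ with symmetric difference $\{y\}$, one case-analyzes on whether $y = x$ and on which of the $A_i$ contain $x$; in each case one exhibits a corresponding unit edge in $S_x(\mathcal{F})$, with the subtle case being when $y \ne x$, both $A_i$ contain $x$, and exactly one of them is actually moved by the shift (there the destroyed edge $(A_1,A_2)$ is charged to the new edge $(A_1 \setminus \{x\}, A_2 \setminus \{x\})$ in $S_x(\mathcal{F})$). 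Verifying this case analysis and the injectivity is the technical heart of the argument, and I expect it to be the main obstacle.

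Third, once the problem is reduced to a downward-closed family $\mathcal{F}^\ast$ of VC-dimension $\le d_0$ on $P$, the conclusion is almost immediate. Every set in $\mathcal{F}^\ast$ has cardinality at most $d_0$ (otherwise that set, together with all its subsets which lie in $\mathcal{F}^\ast$ by downward-closedness, would be shattered). Moreover, for every $A \in \mathcal{F}^\ast$ and every $y \in A$, the set $A \setminus \{y\}$ lies in $\mathcal{F}^\ast$, so $A$ participates in exactly $|A|$ unit-distance edges as the larger endpoint. Counting each edge once through its larger endpoint gives
\[
|E(UD(\mathcal{F}))| \,\le\, |E(UD(\mathcal{F}^\ast))| \,=\, \sum_{A \in \mathcal{F}^\ast} |A| \,\le\, d_0\, |\mathcal{F}^\ast| \,=\, d_0\, |\mathcal{F}|,
\]
as required. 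If the injection in the shifting step proves too delicate to write down cleanly, a fallback strategy is a random-sampling argument: sample $S \subseteq P$ by including each point independently with probability $\Theta(1/d_0)$, use the Sauer--Shelah inequality \eqref{eq:SauerShelah} on the trace $\mathcal{F}|_S$, and compare the expected trace size to the expected number of edges that collapse under the projection $A \mapsto A \cap S$; this yields the weaker bound $|E(UD(\mathcal{F}))| = O(d_0)|\mathcal{F}|$, which would still be sufficient for all downstream uses in the paper.
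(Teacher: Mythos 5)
Your argument is correct, but note that the paper does not actually prove Lemma \ref{unit}: it is quoted directly from Haussler \cite{haus}, whose original treatment (going back to the one-inclusion-graph arguments of Haussler--Littlestone--Warmuth) is different from your shifting approach, so what you have written is a genuine self-contained alternative rather than a reconstruction of anything in this paper. The step you flag as the technical heart does close. Concretely, map an edge $\{A_1,A_2\}$ of $UD(\mathcal{F})$ to itself when its direction is $x$, when neither endpoint contains $x$, or when both contain $x$ and neither is moved, and to $\{A_1\setminus\{x\},A_2\setminus\{x\}\}$ when both contain $x$ and at least one is moved; in every case the target is an edge of $UD(S_x(\mathcal{F}))$ (in the ``exactly one moved'' case the unmoved endpoint stayed put precisely because its $x$-removal already lies in $\mathcal{F}$, and that removal, containing no $x$, survives the shift), and the map is injective: a target edge $\{C,D\}$ with $x\notin C\cup D$ could a priori arise both from $\{C,D\}$ itself and from $\{C\cup\{x\},D\cup\{x\}\}$, but if $\{C,D\}$ is an edge of $UD(\mathcal{F})$ then $C,D\in\mathcal{F}$, so neither $C\cup\{x\}$ nor $D\cup\{x\}$ is moved and the latter edge maps to itself, not to $\{C,D\}$. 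Your endgame is also right: in a downward-closed family of VC-dimension at most $d_0$ every member is itself shattered, hence has size at most $d_0$, and counting each edge at its larger endpoint gives $\sum_{A}|A|\le d_0|\mathcal{F}^*|$; together with preservation of cardinality, monotonicity of the edge count, and non-increase of VC-dimension under shifting (all standard, and termination follows since $\sum_A|A|$ strictly drops at each nontrivial shift), this yields exactly the stated bound. Your fallback via random sampling would only give $O(d_0)|\mathcal{F}|$, but that weaker form would indeed suffice for the only use of the lemma here, namely extracting a low-degree vertex of $UD(\mathcal{T})$ in the proof of Lemma \ref{packing}, where constant factors are absorbed.
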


We say that the set system $\mathcal{F}$ is $(k,\delta)$-\emph{separated} if among any $k$ sets $A_1,\ldots,A_k \in \mathcal{F}$ we have

\begin{equation*}
|(A_1\cup \cdots \cup A_k)\setminus(A_1\cap \cdots \cap A_k)| \geq \delta.
\end{equation*}

\noindent The key tool used to prove Theorem \ref{weak} is the following \emph{packing lemma}, which was proved by Chazelle for set systems that are $(2,\delta)$-separated.  The proof of Lemma \ref{packing} can be regarded as a modification of Chazelle's argument (see \cite{mat2}), but we give a self-contained presentation.  We note that a weaker result, namely $|\mathcal{F}|\leq O\left((m/\delta)^d\log^d(m/\delta)\right)$, can be obtained with a simpler proof using epsilon-nets (see \cite{mat2} or \cite{szegedy}).

\begin{lemma}[Packing Lemma]\label{packing}

Let $\mathcal{F}$ be a set system on a ground set $P$ such that $|P| = m$ and $\pi_{\mathcal{F}}(z) \leq cz^d$ for all $z$.  If $\mathcal{F}$ is $(k,\delta)$-separated, then $|\mathcal{F}| \leq c'(m/\delta)^d$ where $c' = c'(c,d,k)$.

\end{lemma}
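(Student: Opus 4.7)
My plan is to establish Lemma \ref{packing} by reducing the $(k,\delta)$-separated case to the classical $(2,\rho)$-separated case, namely Chazelle's original packing lemma as presented in \cite{mat2}. Set $\rho := \delta/(2k)$, and greedily select a maximal subfamily $\mathcal{F}^\ast \subseteq \mathcal{F}$ with the property that $|A \bigtriangleup A'| \geq \rho$ for all distinct $A, A' \in \mathcal{F}^\ast$. By construction $\mathcal{F}^\ast$ is $(2,\rho)$-separated, and since $\mathcal{F}^\ast \subseteq \mathcal{F}$ the primal shatter bound is inherited: $\pi_{\mathcal{F}^\ast}(z) \leq cz^d$. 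Applying Chazelle's lemma to $\mathcal{F}^\ast$ yields $|\mathcal{F}^\ast| \leq c_0(m/\rho)^d = O_{c,d,k}\!\left((m/\delta)^d\right)$.

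The reduction is completed by a cluster-size argument. For each $A^\ast \in \mathcal{F}^\ast$, define
\[
\mathcal{C}(A^\ast) := \{A \in \mathcal{F} : |A \bigtriangleup A^\ast| \leq \rho\}.
\]
By the maximality of $\mathcal{F}^\ast$, every $A \in \mathcal{F}$ lies in at least one such cluster (possibly $A = A^\ast$ itself). The key claim is that $|\mathcal{C}(A^\ast)| \leq k-1$: if on the contrary $A_1,\ldots,A_k$ were $k$ distinct members of $\mathcal{C}(A^\ast)$, then for each $x$ in the spread $E := (A_1 \cup \cdots \cup A_k) \setminus (A_1 \cap \cdots \cap A_k)$ there exist $i$ with $x \in A_i$ and $j$ with $x \notin A_j$, so whether or not $x \in A^\ast$ one is forced to have $x \in A_i \bigtriangleup A^\ast$ or $x \in A_j \bigtriangleup A^\ast$. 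Thus $E \subseteq \bigcup_{i=1}^k (A_i \bigtriangleup A^\ast)$ and $|E| \leq k\rho = \delta/2 < \delta$, contradicting the $(k,\delta)$-separation of $\mathcal{F}$. Summing cluster sizes then gives
\[
|\mathcal{F}| \leq \sum_{A^\ast \in \mathcal{F}^\ast} |\mathcal{C}(A^\ast)| \leq (k-1)|\mathcal{F}^\ast| \leq c'(m/\delta)^d
\]
with $c' = c'(c,d,k)$.

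The main obstacle is philosophical rather than technical: the paper advertises a self-contained presentation of Lemma \ref{packing}, whereas the plan above treats Chazelle's $k=2$ packing lemma as a black box. If a from-scratch argument is required, one should instead adapt Chazelle's random-sample argument directly to $(k,\delta)$-separated systems. The alternative plan would take a random $R \subset P$ by including each point with probability of order $1/\delta$; bound $|\mathcal{F}|_R|$ using the primal shatter assumption; bound the unit distance graph on $\mathcal{F}|_R$ via Haussler's Lemma \ref{unit}, together with the VC-dimension estimate $d_0 = O(d \log(cd))$ derived from the shatter function through Sauer--Shelah; and then use the $(k,\delta)$-separation in the double-counting step to ensure that at most $k-1$ members of $\mathcal{F}$ can collapse onto any single restriction in $\mathcal{F}|_R$. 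The delicate point is to balance the sample size and the counting estimates in such a way as to avoid the logarithmic loss that the text notes is present in the simpler $\varepsilon$-net argument.
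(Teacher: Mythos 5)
Your argument is correct, but it takes a genuinely different route from the paper. You reduce the $(k,\delta)$-separated case to the classical $(2,\rho)$-separated case with $\rho=\delta/(2k)$: a maximal pairwise $\rho$-separated subfamily $\mathcal{F}^\ast\subseteq\mathcal{F}$ inherits the shatter bound and is handled by Chazelle's packing lemma, while the $(k,\delta)$-separation enters only through the clean observation that the symmetric-difference ball of radius $\rho$ around any $A^\ast$ contains at most $k-1$ members of $\mathcal{F}$ (the spread of $k$ such members is covered by $\bigcup_{i}(A_i\bigtriangleup A^\ast)$, hence has size at most $k\rho=\delta/2<\delta$), and maximality makes these balls cover $\mathcal{F}$, giving $|\mathcal{F}|\le (k-1)|\mathcal{F}^\ast|=O_{c,d,k}\bigl((m/\delta)^d\bigr)$. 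The paper instead re-runs Chazelle's argument from scratch: a random sample of size $\Theta(k^2d_0m/\delta)$, weights on the trace $\mathcal{T}=\{A\cap S\}$, an upper bound on the total edge weight of the unit-distance graph via Haussler's Lemma \ref{unit}, and a lower bound in which the $(k,\delta)$-separation is converted, via Tur\'an's theorem, into the estimate that an equivalence class of $b$ sets carries symmetric-difference mass at least $\frac{\delta}{k(k-1)}b(b-k)$. Your version is shorter and more modular, still avoids the logarithmic loss of the $\eps$-net argument, and isolates exactly where the $k$-wise separation is used, at the price of treating the $k=2$ lemma as a black box (legitimate, since it is the published result of Chazelle that the paper itself cites, whereas the paper deliberately gives a self-contained proof) and of a worse constant, of order $(k-1)(2k)^d$ times Chazelle's. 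Two small points to tidy: when $\delta<2k$ the parameter $\rho$ drops below $1$ (and Chazelle's lemma is usually stated for integer $\delta\ge 1$), but then the claim is trivial since $|\mathcal{F}|\le\pi_{\mathcal{F}}(m)\le cm^d\le c(2k)^d(m/\delta)^d$, exactly as the paper disposes of its small-$\delta$ case; and your alternative from-scratch sketch is essentially the paper's proof, but it is not needed once the reduction is accepted.
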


\begin{proof}
We assume, for contradiction, that $|\mathcal{F}| > c'(m/\delta)^d$ (where the constant $c'$ depends on $c,d,k$ and is set below).

Since the primal shatter function of $\mathcal{F}$ satisfies $\pi_{\mathcal{F}}(z) \leq cz^d$ for all $z$, we know that the VC-dimension of $\mathcal{F}$ is at most $4d\log (cd)$.  Set $d_0 = 4d\log (cd)$.   If  $\delta \leq 4k(k-1)d_0$, then the statement is trivial for sufficiently large $c'$ (by the assumption $|\mathcal{F}|\leq cm^d$).  Hence, we can assume  $\delta > 4k(k-1)d_0$.

Let $S\subset P$ be a random $s$-element subset, where $s  = \lceil 4k(k-1)d_0m/\delta \rceil$.  Set $\mathcal{T} = \{A\cap S: A \in \mathcal{F}\},$ and for each $B \in \mathcal{T}$ we define its \emph{weight} $w(B)$ as the number of sets $A \in \mathcal{F}$ with $A\cap S = B$.    Notice that

\begin{equation*}
\sum\limits_{B \in \mathcal{T}}w(B) = |\mathcal{F}|.
\end{equation*}

We let $E$ be the edge set of the unit distance graph $UD(\mathcal{T})$, and define the weight of an edge $e = (B_1,B_2)$ in $E$ as $\min(w(B_1),w(B_2))$.  Finally we set

\begin{equation*}
W = \sum\limits_{e \in E}w(e).
\end{equation*}

\noindent We will estimate the expectation of $W$ in two ways.

By Lemma \ref{unit}, we know that the unit distance graph $UD(\mathcal{T})$ has a vertex $B \in \mathcal{T}$ of degree at most $2d_0$. Since the weight of all edges emanating out of $B$ is at most $w(B)$, by removing vertex $B\in \mathcal{T}$, the total edge weight drops by at most $2d_0w(B)$.  By repeating this argument until there are no vertices left, we have

\begin{equation*}
W \leq 2d_0\sum\limits_{B \in \mathcal{T}} w(B) = 2d_0|\mathcal{F}|.
\end{equation*}

Now we bound $\mathbb{E}[W]$ from below.  Suppose we first choose a random $(s-1)$-element subset $S'\subset P$, and then choose a single element $p \in P\setminus S'$.  Then the set $S = S'\cup \{p\}$ is a random $s$-element set.  Let $E_1\subset E$ be the edges in the unit distance graph $UD(\mathcal{T})$ that differ by element $p$, and let

\begin{equation*}
W_1  = \sum\limits_{e \in E_1}w(e).
\end{equation*}

\noindent By symmetry, we have $\mathbb{E}[W] = s\cdot \mathbb{E}[W_1]$.  Hence, we shall bound $\mathbb{E}[W_1]$ from below.  To do so, we will estimate $\mathbb{E}[W_1|S']$ from below, which is the expected value of $W_1$ when $S'\subset P$ is a fixed $(s-1)$-element subset and we choose $p$ at random from $P\setminus S'$.

Divide $\mathcal{F}$ into equivalence classes $\mathcal{F}_1,\mathcal{F}_2,\ldots,\mathcal{F}_r$, where two sets $A_1,A_2 \in \mathcal{F}$ are in the same class if and only if $A_1\cap S' = A_2\cap S'$. By the assumption $\pi_{\mathcal{F}}(z) \leq cz^d$ for all $z$, we have

\begin{equation*}
r \leq \pi_{\mathcal{F}}(s-1) \leq c_0(m/\delta)^d,
\end{equation*}

\noindent where $c_0 = c_0(c,k,d)$.  Let $\mathcal{F}_i$ be one of the equivalence classes such that $|\mathcal{F}_i| = b$.  If an element $p \in P\setminus S'$ is chosen such that $b_1$ sets from $\mathcal{F}_i$ contain $p$ and $b_2 = b - b_1$ sets (from $\mathcal{F}_i$) do not contain $p$, then $\mathcal{F}_i$ gives rise to an edge in $E_1$ of weight $\min(b_1,b_2)$.  Since $\min(b_1,b_2) \geq b_1b_2/b$, we will estimate $\mathbb{E}[b_1b_2]$ from below when picking $p$ at random.  Notice that $b_1b_2$ is the number of ordered pairs of sets in $\mathcal{F}_i$ that differ in point $p$.  Hence,

\begin{equation}\label{b1b2}
\mathbb{E}[b_1b_2] \geq \sum\limits_{(A_1,A_2) \in \mathcal{F}_i\times\mathcal{F}_i} \mathbb{P}[p \in A_1\bigtriangleup A_2] = \sum\limits_{(A_1,A_2) \in \mathcal{F}_i\times\mathcal{F}_i}  \frac{ |A_1\bigtriangleup A_2|}{m - s + 1}.
\end{equation}

\noindent Now, given any $k$ sets $A_1,\ldots,A_k \in \mathcal{F}_i$, we have

\begin{equation*}
\bigcup\limits_{2\leq  j \leq k} A_1\bigtriangleup A_j =   (A_1\cup \cdots \cup A_k)\setminus(A_1\cap \cdots \cap A_k).
\end{equation*}

\noindent Since $\mathcal{F}_i$ is $(k,\delta)$-separated, we have

\begin{equation*}
\sum\limits_{2 \leq j  \leq k} |A_1 \bigtriangleup A_j| \geq |(A_1\cup \cdots \cup A_k)\setminus(A_1\cap \cdots \cap A_k)| \geq \delta.
\end{equation*}

\noindent Therefore, every $k$ sets in $\mathcal{F}_i$ contain a pair of sets $(A_1,A_j)$ such that $|A_1 \bigtriangleup A_j| \geq \delta/(k-1)$.  We define the auxiliary graph $G_i = (\mathcal{F}_i,E_i)$ whose vertices are the members in $\mathcal{F}_i$, and two sets $A_{1},A_{2} \in \mathcal{F}_i$ are adjacent if and only if $|A_{1}\bigtriangleup A_{2}| \geq \delta/(k-1)$.  Since $G_i$ does not contain an independent set of size $k$,  by Tur\'an's theorem (see, e.g.,  \cite{pach}), we have $|E_i| \geq \frac{b(b - k)}{2k}$.  Therefore,

\begin{equation}\label{turan}
\sum\limits_{(A_1,A_2) \in \mathcal{F}_i\times\mathcal{F}_i} |A_1 \bigtriangleup A_2|  \geq 2\frac{b(b - k)}{2k}\frac{\delta}{k-1}  =\frac{\delta}{k(k-1)}b(b - k).
\end{equation}

\noindent By combining equations (\ref{b1b2}) and (\ref{turan}), we have

\begin{equation*}
\mathbb{E}[b_1b_2] \geq  \frac{\delta}{k(k-1)m}b(b-k).
\end{equation*}

\noindent Since $\min(b_1,b_2)\geq b_1b_2/b$, the expected contribution of $\mathcal{F}_i$ to $W_1$ is at least $\frac{\delta}{k(k-1)m}(b-k)$.  Summing over all classes, we have

\begin{eqnarray*}
\mathbb{E}[W_1] & \geq & \frac{\delta}{k(k-1)m}\sum\limits_{i = 1}^r(|\mathcal{F}_i| - k) \\\\
 & = & \frac{\delta}{k(k-1)m}(|\mathcal{F}| - kr)\\\\
 & \geq & \frac{\delta}{k(k-1)m}(|\mathcal{F}| - kc_0(m/\delta)^d).
\end{eqnarray*}

\noindent Recall that $|\mathcal{F}| > c'(m/\delta)^d$. By taking $c'$ to be sufficiently large with respect to $k$ and $c_0$, and since $2d_0|\mathcal{F}| \geq \mathbb{E}[W] = s\cdot \mathbb{E}[W_1]$, we have

\begin{equation*}
2d_0|\mathcal{F}| \geq \frac{s\delta}{k(k-1)m}(|\mathcal{F}| - kc_0(m/\delta)^d)  \ge 4d_0|\mathcal{F}| - k4d_0c_0(m/\delta)^d,
\end{equation*}

\noindent which implies $|\mathcal{F}| \leq c'(m/\delta)^d$, where $c' = (c,d,k)$.
\end{proof}

We are now ready to prove Theorem \ref{weak}.

\medskip

\begin{proof}[Proof of Theorem \ref{weak}.]  Let $\mathcal{F}_1 = \{N(q): q\in Q\}$ and $\mathcal{F}_2 = \{N(p): p \in P\}$.  Notice the \emph{dual} of $\mathcal{F}_2$ is isomorphic to the set system $\mathcal{F}_1$.  Given a set of $k$ points $\{q_1,\ldots,q_k\} \subset Q$, we say that a set $B \in \mathcal{F}_2$ \emph{crosses} $\{q_1,\ldots,q_k\}$ if $ \{q_1,\ldots,q_k\} \cap B \neq \emptyset$ and $\{q_1,\ldots,q_k\} \not\subset B$.  We make the following observation.

\begin{observation}

There exists $k$ points $q_1,\ldots,q_k \in Q$ such that at most $2c'm/n^{1/d}$ sets from $\mathcal{F}_2$ cross $\{q_1,\ldots,q_k\}$, where $c'$ is defined in Lemma \ref{packing}.

\end{observation}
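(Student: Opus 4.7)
The plan is to apply the contrapositive of the Packing Lemma (Lemma \ref{packing}) to the set system $\mathcal{F}_1$. Writing $A_i := N(q_i) \in \mathcal{F}_1$, a set $N(p) \in \mathcal{F}_2$ crosses $\{q_1, \ldots, q_k\}$ precisely when $p$ lies in some $A_i$ but not in all of them, that is, $p \in (A_1 \cup \cdots \cup A_k) \setminus (A_1 \cap \cdots \cap A_k)$. Consequently, the number of crossing sets in $\mathcal{F}_2$ is bounded above by $|(A_1 \cup \cdots \cup A_k) \setminus (A_1 \cap \cdots \cap A_k)|$, and the observation reduces to showing that $\mathcal{F}_1$ is \emph{not} $(k, \delta)$-separated for $\delta = 2c' m/n^{1/d}$.

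Before invoking the Packing Lemma, I would reduce to the case where $|\mathcal{F}_1|$ is comparable to $n$. Since $G$ is $K_{k,k}$-free, any neighborhood $A \subset P$ of size at least $k$ equals $N(q)$ for fewer than $k$ different vertices $q \in Q$; otherwise, $k$ such $q$'s together with any $k$ points in $A$ would form a $K_{k,k}$. The vertices $q$ of degree less than $k$ contribute at most $(k-1)n$ edges in total and can be absorbed by the additive $O(n)$ term in the final bound of Theorem~\ref{weak}, so we may assume every $q \in Q$ has degree at least $k$, giving $|\mathcal{F}_1| \geq n/(k-1)$.

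Now suppose for contradiction that $\mathcal{F}_1$ is $(k, \delta)$-separated with $\delta = 2c' m/n^{1/d}$. Applying Lemma~\ref{packing} then yields
\[
\frac{n}{k-1} \;\leq\; |\mathcal{F}_1| \;\leq\; c' \left(\frac{m}{\delta}\right)^{\!d} \;=\; \frac{n}{2^d (c')^{d-1}},
\]
which fails once $c' = c'(c,d,k)$ is taken sufficiently large (permissible, since the packing constant is already allowed to depend on $c$, $d$, and $k$). Hence $\mathcal{F}_1$ is not $(k, \delta)$-separated, so there exist $k$ distinct sets $A_1, \ldots, A_k \in \mathcal{F}_1$ with
\[
|(A_1 \cup \cdots \cup A_k) \setminus (A_1 \cap \cdots \cap A_k)| < \delta = \frac{2c' m}{n^{1/d}}.
\]
Selecting any $q_i \in Q$ with $N(q_i) = A_i$ produces the required $k$-tuple.

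The only real subtlety lies in the preliminary reduction to $|\mathcal{F}_1| \geq n/(k-1)$, and this is immediate from the $K_{k,k}$-free hypothesis; beyond that, the observation is a direct unpacking of the Packing Lemma in contrapositive form, and I do not foresee any further obstacles.
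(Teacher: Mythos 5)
Your proof is correct and follows essentially the same route as the paper: suppose every $k$-tuple of points of $Q$ is crossed by at least $\delta = 2c'm/n^{1/d}$ sets, observe that this is exactly a $(k,\delta)$-separation condition, and contradict Lemma \ref{packing}. The only difference is bookkeeping: the paper applies the Packing Lemma to the dual system $\mathcal{F}_2^{\ast}$ (isomorphic to $\mathcal{F}_1$), whose size it takes to be $n$, so the minimum-degree reduction and the factor $k-1$ in your comparison are not needed there (your inequality $2^d(c')^{d-1}>k-1$ also degenerates when $d=1$, a corner case outside the paper's applications).
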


\begin{proof}
For the sake of contradiction, suppose that every set of $k$ points has at least $2c'm/n^{1/d}$ sets from $\mathcal{F}_2$ crossing it.  Then the dual set system $\mathcal{F}^{\ast}_2$ is $(k,\delta)$-separated, where $\delta = 2c'm/n^{1/d}$, and has the property that $\pi_{\mathcal{F}^{\ast}_2}(z) = \pi_{\mathcal{F}_1}(z) \leq cz^d$ for all $z$.  By Lemma \ref{packing}, we have

\begin{equation*}
n = |\mathcal{F}_2^{\ast}| \leq c'\left(\frac{m}{\delta}\right)^d.
\end{equation*}

\noindent Hence, $\delta \leq (c')^{1/d}m/n^{1/d}$, which is a contradiction.
\end{proof}

Let $q_1,\ldots,q_k$ be the set of $k$ points such that at most $2c'm/n^{1/d}$ sets in $\mathcal{F}_2$ cross it.  Since $G$ is $K_{k,k}$-free, there are at most $(k-1)$ points $p_1,\ldots,p_{k-1} \in P$ with the property that the neighborhood $N_G(p_i)$ contains $\{q_1,\ldots,q_k\}$, for $1\leq i \leq k-1$.  Therefore, the neighborhood of $q_1$ contains at most $2c'm/n^{1/d} + (k-1)$ points.  We remove $q_1$ and repeat this argument until there are fewer than $k$ vertices remaining in $Q$ and see that

\begin{equation*}
|E(G)| \leq (k-1)m + \sum\limits_{i = k}^n \left(2c'\frac{m}{i^{1/d}} + (k-1)\right) \leq c_1(mn^{1 - 1/d} + n),
\end{equation*}

\noindent for sufficiently large $c_1 = c_1(c,d,k)$.
\end{proof}

\section{The case where $d_1=d_2=2$}\label{warmup}
In this section, we shall prove Theorem \ref{2dim} in the case $d_1=d_2=2$, i.e., we shall establish part \eqref{d1d22Bd} of Theorem \ref{2dim}. Our argument will use the method of ``cuttings,'' which we shall now recall. Let $\Sigma = \{V_1,\ldots,V_n\}$ be a collection of curves of degree at most $t$ in $\mathbb{R}^2$, that is, $V_i = \{x \in \mathbb{R}^2: f_i(x) = 0\}$ for some bivariate polynomial $f_i$ of degree at most $t$.  We will assume that $t$ is fixed, and $n$ is some number tending to infinity.  A \emph{cell} in the arrangement $\mathcal{A}(\Sigma) = \bigcup_i V_i$ is a relatively open connected set defined as follows.  Let $\approx$ be an equivalence relation on $\mathbb{R}^2$, where $x \approx y$ if $\{i: x \in V_i\} = \{i:y \in V_i\}$.  Then the cells of the arrangement $\Sigma$ are the connected components of the equivalence classes. The classic Milnor-Thom Theorem says that the arrangement $\mathcal{A}(\Sigma)$ subdivides $\mathbb{R}^2$ into at most $O(n^2)$ cells (semi-algebraic sets),
but these cells can have very large description complexity.  A result of Chazelle et al.~\cite{chazelle} shows that these cells can be further subdivided into $O(n^2)$ smaller cells that have constant descriptive complexity. By combining this technique with the standard theory of random sampling \cite{agarwal,noga,shor}, one can obtain the following lemma which will be used in the next section.  We say that the surface $V_i = \{x \in \mathbb{R}^2: f_i(x) = 0\}$ \emph{crosses} the cell $\Omega\subset \mathbb{R}^2$ if $V_i\cap \Omega \neq \emptyset$ and $V_i$ does not fully contain $\Omega$.

\begin{lemma}{\bf (Cutting lemma, \cite{chazelle})}
\label{cut2}
For fixed $t > 0$, let $\Sigma$ be a family of $n$ algebraic surfaces in $\mathbb{R}^2$ of degree at most $t$.  Then for any $r > 0$, there exists a decomposition of $\mathbb{R}^2$ into at most $O(r^2)$ relatively open connected sets (cells) such that each cell is crossed by at most $n/r$ curves from $\Sigma$.
\end{lemma}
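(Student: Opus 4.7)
The plan is to combine the Clarkson--Shor random-sampling paradigm with vertical decomposition of a curve arrangement, followed by a hierarchical refinement that sharpens the cell count from the easy bound of $O(r^2\log^2 r)$ down to $O(r^2)$.

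First I would take a random sample $R \subseteq \Sigma$ of size $\Theta(r\log r)$ and form the arrangement $\mathcal{A}(R)$. By Theorem \ref{milnor}, this arrangement has $O((r\log r)^2)$ cells, though possibly of unbounded descriptive complexity. To replace them with simple pieces, each curve of $R$ is first cut at its $O(1)$ singular and $x$-extremal points (only finitely many by B\'ezout, since the degree is at most $t$), producing $x$-monotone arcs; then a \emph{vertical decomposition} is performed, in which from every vertex of $\mathcal{A}(R)$ and every arc-endpoint one extends vertical segments upward and downward until they first meet another curve of $R$. The resulting pseudo-trapezoidal cells are each bounded by at most two $x$-monotone arcs and two vertical segments, so they have constant descriptive complexity depending only on $t$, and there are $O((r\log r)^2)$ of them. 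Next I would invoke the $\epsilon$-net theorem for the range space whose ranges are $\{V \in \Sigma : V \text{ crosses } \tau\}$ indexed by pseudo-trapezoids $\tau$; this range space has VC-dimension bounded in terms of $t$, because each candidate $\tau$ is determined by $O(1)$ curves and the shatter function of $\Sigma$ is polynomial by Milnor--Thom. With $\epsilon = 1/r$ the sampling lemma guarantees that, with positive probability, every pseudo-trapezoid is crossed by at most $n/r$ curves of $\Sigma$, producing a weak $(1/r)$-cutting of size $O(r^2 \log^2 r)$.

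To eliminate the $\log^2 r$ factor I would apply the hierarchical refinement of \cite{chazelle}: construct a weak cutting at a coarser level $r_0 \ll r$, and inside each cell $\tau_i$ of that cutting recursively construct a weak cutting using only the $k_i$ curves of $\Sigma$ that cross $\tau_i$, iterating until the target density $n/r$ is reached. A Clarkson--Shor moment inequality controlling $\sum_i k_i^{j}$ telescopes across the levels and yields a final count of $O(r^2)$ simple cells, each crossed by at most $n/r$ curves of $\Sigma$. I expect the principal obstacle to be the vertical decomposition step itself: unlike lines, curves of degree $t$ can be non-monotone, have vertical tangents, self-intersect, or meet in high-order tangencies, so care is required to ensure that pseudo-trapezoids are well defined, have complexity bounded in $t$, and that each subproblem arising in the refinement is structurally the same as the original. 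This bookkeeping is the technical heart of \cite{chazelle} and is precisely what forces the implicit constants to depend on $t$.
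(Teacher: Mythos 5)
Your outline is correct and follows exactly the route the paper itself indicates for Lemma \ref{cut2}: the paper does not reprove it but cites the constant-complexity cell decomposition of Chazelle, Edelsbrunner, Guibas and Sharir \cite{chazelle} combined with standard random-sampling/$\eps$-net machinery \cite{agarwal,noga,shor}, which is precisely your vertical-decomposition-plus-sampling argument, with the hierarchical Clarkson--Shor refinement supplying the sharp $O(r^2)$ bound in place of $O(r^2\log^2 r)$. No substantive discrepancy to report.
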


We are now ready to prove the following theorem, which will establish \eqref{d1d22Bd}.

\begin{theorem}
Let $G = (P,Q,E)$ be a semi-algebraic bipartite graph in $\mathbb{R}^2$ such that $E$ has description complexity at most $t$, $|P| = m$, and $|Q| = n$.  If $G$ is $K_{k,k}$-free, then

\begin{equation*}
|E(G)| \leq  c\left(m^{ \frac{2}{3}}n^{ \frac{2}{3}} + m + n\right),
\end{equation*}

\noindent where $c = c(k,t)$.

\end{theorem}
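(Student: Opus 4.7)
The plan is to combine the cutting lemma (Lemma~\ref{cut2}) with Corollary~\ref{app}, adapting the classical Szemer\'edi--Trotter strategy to the region-containment setting. For each $q \in Q$, let $Z_q = \{p \in \mathbb{R}^2 : (p,q) \in E\}$ be the semi-algebraic region encoding incidences with $q$; its boundary $\partial Z_q$ is a union of at most $t$ algebraic curves of degree $\leq t$, so the collection $\Sigma = \bigcup_{q \in Q} \partial Z_q$ consists of $O(n)$ curves. Apply Lemma~\ref{cut2} to $\Sigma$ with a parameter $r$ to be chosen below, yielding a subdivision of $\mathbb{R}^2$ into $O(r^2)$ cells, each crossed by at most $O(n/r)$ curves from $\Sigma$.

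For each cell $\Omega$, classify each $q \in Q$ as \emph{contained} ($\Omega \subset Z_q$), \emph{crossing} ($\partial Z_q$ enters $\Omega$), or \emph{irrelevant} ($\Omega \cap Z_q = \emptyset$); irrelevant $q$'s contribute no edges, while crossing $q$'s number at most $O(n/r)$. Writing $Q^c_\Omega$ and $Q^x_\Omega$ for the contained and crossing sets and $m_\Omega = |P \cap \Omega|$, the edges within $\Omega$ decompose as $m_\Omega \cdot |Q^c_\Omega|$ (contained) plus the edges between $P \cap \Omega$ and $Q^x_\Omega$ (crossing). Since the crossing subgraph is again $K_{k,k}$-free and semi-algebraic, Corollary~\ref{app} bounds it by $c'\bigl(m_\Omega (n/r)^{1/2} + n/r\bigr)$. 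Summing over all cells, the total crossing contribution is $O\bigl(m\sqrt{n/r} + rn\bigr)$, which, setting $r = m^{2/3}/n^{1/3}$, balances to $O\bigl((mn)^{2/3}\bigr)$.

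For the contained contribution, $K_{k,k}$-freeness forces $\min(m_\Omega, |Q^c_\Omega|) \leq k-1$. Cells with $m_\Omega \geq k$ therefore contribute at most $(k-1)m_\Omega$ each, summing to $(k-1)m$. The step I expect to be the main obstacle is controlling the contribution of ``small'' cells with $m_\Omega \leq k-1$, where $|Q^c_\Omega|$ is a priori unbounded and can be as large as $n$, so that the naive estimate $(k-1)\sum_\Omega |Q^c_\Omega| = O(nr^2)$ is too weak. The natural way to circumvent this is to replace the cutting by a balanced variant in which each cell also contains $\Theta(m/r^2)$ points of $P$; for the chosen $r$, each nonempty cell then automatically contains at least $k$ points throughout the regime $n^{1/2} \lesssim m \lesssim n^2$, which is precisely the regime in which the $(mn)^{2/3}$ term dominates the target bound. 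Outside this regime, the direct application of Corollary~\ref{app} (or its symmetric counterpart obtained after swapping the roles of $P$ and $Q$) already gives $|E(G)| = O(m + n)$. Combining the three contributions yields $|E(G)| \leq c\bigl(m^{2/3}n^{2/3} + m + n\bigr)$, as required.
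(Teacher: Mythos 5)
Your global cell-by-cell strategy is genuinely different from the paper's argument, and most of it is fine: the crossing contribution is handled exactly as you say (Corollary \ref{app} inside each cell, summed over $O(r^2)$ cells with the per-cell crossing bound $O(n/r)$, balanced at $r=m^{2/3}/n^{1/3}$), and cells with at least $k$ points of $P$ contribute at most $(k-1)m_\Omega$ contained edges by $K_{k,k}$-freeness. But the step you yourself flag as the main obstacle is a genuine gap as written: your fix invokes a ``balanced variant'' of the cutting in which every cell is crossed by $O(n/r)$ curves \emph{and} contains $\Theta(m/r^2)$ points of $P$. No such statement is available from Lemma \ref{cut2}, which gives no control whatsoever on how the points of $P$ distribute among the cells, and it is not a routine strengthening: the cells are dictated by the curves, so points of $P$ can cluster or sit in cells (including lower-dimensional ones, or on the curves themselves) containing fewer than $k$ points, and you cannot merge such cells to raise their point count without losing the per-cell crossing bound. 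Since a cell with fewer than $k$ points can be fully contained in arbitrarily many of the regions $Z_q$, the contained contribution from these cells is exactly the term your argument cannot currently bound, and the whole proof hinges on the unproved partition. A rigorous repair would need a Matou\v sek-type partition theorem for the point set with respect to bounded-degree algebraic ranges (classes of size $\Theta(m/r^2)$ with a total crossing bound, followed by H\"older/Cauchy--Schwarz), which is real additional machinery not present in the paper.

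It is worth seeing how the paper sidesteps this entirely: instead of summing over all cells, it uses pigeonhole to find a \emph{single} cell containing at least $m/(c_2r^2)$ points, notes that if even this heavy cell has fewer than $k$ points then $m\gtrsim n^2$ and the dual of Corollary \ref{app} already gives $O(m)$ edges, and otherwise extracts one point of that cell whose degree is $O(n^{2/3}/m^{1/3})+(k-1)$ (curves not crossing the cell either miss it or contain all of its at least $k$ points, so $K_{k,k}$-freeness caps them at $k-1$). Deleting that point and iterating gives the bound without ever needing every cell to be populated. So your decomposition into contained/crossing/irrelevant regions per cell is sound, but to make your global version work you must either prove the balanced partition you assume or switch to an extraction-and-iterate argument of this kind.
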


\begin{proof}
If $n > m^2$, then by Corollary \ref{app} we have $|E(G)| \leq (c/2)n$ for sufficiently large $c = c(k,t)$ and we are done.  Hence, we can assume $n \leq m^2$.  Since $E$ is semi-algebraic of description complexity at most $t$, there are polynomials $f_1,\ldots,f_t$ and a Boolean formula $\Phi$ such that for $(p,q) \in P\times Q$,

\begin{equation*}
(p,q) \in E \hspace{.5cm}\Leftrightarrow \hspace{.5cm}\Phi(f_1(p,q)\geq 0,\ldots,f_t(p,q)\geq 0) = 1.
\end{equation*}

\noindent  For each point $q\in Q$, let $V_{i,q} = \{x \in \mathbb{R}^2: f_i(x,q) = 0\}$, $1\leq i \leq t$.  Set $\Sigma = \{V_{i,q}: 1\leq i \leq t, q \in Q\}$.  Note that $|\Sigma| = tn$.

For $r = m^{2/3}/n^{1/3}$, we apply Lemma \ref{cut2}, the cutting lemma, to $\Sigma$, which partitions $\mathbb{R}^2$ into at most $c_2r^2$ cells $\Omega_i$, where $c_2 = c_2(t)$, such that each cell is crossed by at most $|\Sigma|/r$ surfaces from $\Sigma$.  By the Pigeonhole Principle, there is a cell $\Omega\subset \mathbb{R}^2$ that contains at least

\begin{equation*}
\frac{m}{c_2r^2} = \frac{n^{\frac{2}{3}}}{c_2m^{\frac{1}{3}}}
\end{equation*}

\noindent points from $P$.  Let $P'\subset P$ be a set of exactly $\lceil n^{\frac{2}{3}}/(c_2m^{\frac{1}{3}})\rceil$ points in $P\cap \Omega$. If $|P'| < k$, we have

\begin{equation*}
\frac{n^{\frac{2}{3}}}{c_2m^{\frac{1}{3}}}\leq |P'| < k,
\end{equation*}

\noindent which implies $m > n^2/(c_2^3 k^3)$.  By the dual of Corollary \ref{app}, we have $|E(G)| \leq (c/2)m$ for sufficiently large $c = c(k,t)$ and we are done.  Hence, we can assume $|P'| \geq k$.  Let $Q'\subset Q$ be the set of points in $Q$ that gives rise to a surface in $\Sigma$ that crosses $\Omega$.  By the cutting lemma,

\begin{equation*}
|Q'| \leq \frac{tn}{r} = t\frac{n^{\frac{4}{3}}}{m^{\frac{2}{3}}} \leq t(c_2)^2|P'|^2.
\end{equation*}

\noindent By Corollary \ref{app}, we have

\begin{equation*}
|E(P',Q')| \leq c'(|P'||Q'|^{1/2} + |Q'|) \leq c_3|P'|^2,
\end{equation*}

\noindent where $c'$ is defined in Corollary \ref{app}, and $c_3 = c_3(k,t)$.  Hence, there is a point $p \in P'$ such that $p$ has at most $c_3|P'|$ neighbors in $Q'$.  Since $G$ is $K_{k,k}$-free, there are at most $k-1$ points in $Q\setminus Q'$ that are neighbors to $p$.  Hence,

\begin{equation*}
|N_G(p)| \leq c_3|P'| + (k-1) \leq \frac{c_3}{c_2}\left(\frac{n^{\frac{2}{3}}}{m^{\frac{1}{3}}}\right) + (k-1).
\end{equation*}

\noindent  We remove $p$ and repeat this argument until there are no vertices remaining in $P$ and see that

\begin{eqnarray*}
|E(G)| & \leq &  (c/2)(n + m) + \sum\limits_{i = n^{1/2}}^m \left(\frac{c_3}{c_2}\left(\frac{n^{\frac{2}{3}}}{i^{\frac{1}{3}}}\right)  + (k-1)\right)\\\\
 & \leq & c\left(m^{\frac{2}{3}}n^{\frac{2}{3}} + m + n\right).
\end{eqnarray*}

\noindent for sufficiently large $c = c(k,t)$.

\end{proof}

\section{The case of general $d_1$ and $d_2$}\label{polyMethodSection}

The goal of this section is to prove Theorem \ref{2dim} for all dimensions $d_1,d_2$ (i.e., parts \eqref{d1d2dBd} and \eqref{d1d2GenBd} of the theorem).
\subsection{Preliminaries }\label{genDimPrelim}
We begin by introducing some tools, along with some useful notation. Our proof will use some basic tools from algebraic geometry. A nice introduction of these concepts can be found in \cite{CLOu}.

\medskip
\noindent \textbf{Real varieties.} If $V\subset \RR^d$ is a real algebraic variety, we define the dimension $\dim(V)$ of $V$ as in \cite[Section 2.8]{BCR98}. Define $V^*\subset\CC^d$ to be the \emph{complexification} of $V$---the smallest complex variety containing $V$. That is, if $\iota\colon\RR^d\to\CC^d$ is the usual embedding of $\RR^d$ to $\CC^d$, then $V^*$ is the Zariski closure (over $\CC$) of $\iota(V)$. We define $\deg(V)=\deg(V^*)$, where the latter is the usual definition of the degree of a complex variety (i.e., the cardinality of $V^*\cap H$, where $H\subset\CC^d$ is a generic flat of codimension $\dim(V^*)$).

Given a real variety $V\subset\RR^d$, we denote by $I(V)$ the ideal of polynomials $f\in\RR[x_1,\ldots,x_d]$ that vanish on $V$. We say that a real variety $V$ is \emph{irreducible} if it is irreducible over $\RR$ (see e.g.~\cite[Section 2.8]{BCR98}). In particular, if $V$ is irreducible, then $I(V)$ is a prime ideal. Moreover, for every polynomial $g\in \RR[x_1,\ldots,x_d]$ such that $g\notin I(V)$, we have that $(I(V),g)$ strictly contains $I(V)$, and thus $\dim(V\cap Z(g))<\dim V$.

\medskip

\noindent \textbf{Polynomial partitioning.} Consider a set $\pts$ of $m$ points in $\RR^d$. Given a polynomial $f \in \RR[x_1,\ldots, x_d]$, we define the \emph{zero-set} of $f$ to be
$Z(f) = \{ p \in \RR^d \mid f(p)=0 \}$.  For $1 < r \le m$, we say that
$f \in \RR[x_1,\ldots, x_d]$ is an \emph{$r$-partitioning polynomial} for
$\pts$ if no connected component of $\RR^d \setminus Z(f)$ contains more than
$m/r$ points of $\pts$.  Notice that there is no restriction on the number of
points of $\pts$ that lie in $Z(f)$.

The following result is due to Guth and Katz \cite{GK15}.
A detailed proof can also be found in \cite{KMS12}.
\begin{theorem}\label{th:partition}{\bf (Polynomial partitioning \cite{GK15})}
Let $\pts$ be a set of $m$ points in $\RR^d$. Then for every $1< r \le m$,
there exists an $r$-partitioning polynomial $f \in \RR[x_1,\ldots, x_d]$ of
degree at most $\cp \cdot r^{1/d}$, where $\cp$ depends only on $d$.
\end{theorem}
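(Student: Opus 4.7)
My plan is to follow the iterative bisection scheme that underlies the Guth--Katz polynomial partitioning result. The core tool will be a \emph{polynomial ham sandwich lemma}: for any $N$ finite point sets $S_1,\ldots,S_N \subset \RR^d$, there exists a nonzero polynomial $g \in \RR[x_1,\ldots,x_d]$ of degree at most $C_d \, N^{1/d}$ whose zero-set bisects each $S_i$, in the sense that $|S_i \cap \{g>0\}|, |S_i \cap \{g<0\}| \le |S_i|/2$. I would prove this by Veronese embedding: the space of polynomials of degree at most $s$ in $d$ variables has dimension $D = \binom{d+s}{d}$, so mapping $x \in \RR^d$ to the vector of its monomials of degree $\le s$ turns evaluation of a degree-$\le s$ polynomial into an affine linear functional on $\RR^{D}$. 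The classical discrete ham sandwich theorem (a consequence of Borsuk--Ulam) provides a single affine hyperplane bisecting any $D-1$ finite point sets in $\RR^{D-1}$, so choosing $s$ with $\binom{d+s}{d}-1 \ge N$ suffices and yields $s = O(N^{1/d})$.

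With polynomial ham sandwich in hand, I would proceed by induction. Set $k = \lceil \log_2 r \rceil$ and begin with $\mathcal{P}_0 = \{\pts\}$. At stage $j \ge 1$, suppose I have built a disjoint family $\mathcal{P}_{j-1}$ of at most $2^{j-1}$ subsets of $\pts$, each of cardinality at most $m/2^{j-1}$. Apply the ham sandwich lemma to these sets to obtain a nonzero polynomial $g_j$ of degree at most $C_d(2^{j-1})^{1/d}$ bisecting each, then split every $S \in \mathcal{P}_{j-1}$ into $S \cap \{g_j>0\}$ and $S \cap \{g_j<0\}$, arbitrarily assigning points of $S \cap Z(g_j)$ to either part so that the two halves remain balanced; this produces $\mathcal{P}_j$ of size at most $2^j$ with members of cardinality at most $m/2^j$. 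After $k$ stages, set $f := g_1 g_2 \cdots g_k$.

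The degree bound comes from a geometric series,
\begin{equation*}
\deg f \;\le\; \sum_{j=1}^{k} C_d \, (2^{j-1})^{1/d} \;\le\; \frac{C_d}{2^{1/d}-1}\, 2^{k/d} \;\le\; \cp \cdot r^{1/d},
\end{equation*}
where $\cp$ depends only on $d$. For the partitioning property, each connected component of $\RR^d \setminus Z(f)$ carries a well-defined sign pattern $(\sigma_1,\ldots,\sigma_k) \in \{+,-\}^k$ of $(g_1,\ldots,g_k)$, and by construction the points of $\pts$ lying in such a component all belong to the unique member of $\mathcal{P}_k$ indexed by that sign pattern; this set has at most $m/2^k \le m/r$ elements, as required.

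The main obstacle I anticipate is the polynomial ham sandwich lemma itself. While its hyperplane precursor is classical, one must verify that a discrete bisection of the Veronese-lifted images really does pull back to a discrete bisection of the original sets in $\RR^d$; subtleties arise from points that may land on the chosen zero-set and from the need to frame the Borsuk--Ulam hypotheses carefully for finite (non-antipodally-symmetric) data. Once that piece is in place, the remainder is bookkeeping: the geometric-series degree estimate and the observation that the $2^k$ sign cells of $f$ refine the leaves of the bisection tree.
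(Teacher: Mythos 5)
Your proposal is correct in outline and follows the same route as the source the paper cites for this theorem (Guth--Katz, with the detailed account in Kaplan--Matou\v{s}ek--Sharir), and it is also exactly the scheme the paper uses in Section \ref{sec:hilbert} to prove its generalization, Theorem \ref{th:partitionV}: a polynomial ham sandwich theorem obtained from the discrete ham sandwich theorem via a Veronese-type embedding, applied iteratively to a doubling family of subsets, with the final polynomial a product of the bisecting polynomials and the degree controlled by a geometric series.

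One step should be repaired. You cannot in general redistribute the points of $S\cap Z(g_j)$ ``so that the two halves remain balanced'': if $|S|$ is odd and some point of $S$ lies on $Z(g_j)$, then the two parts have total size $|S|$, so one of them必 exceeds $|S|/2$; such overflows would then propagate through the induction and spoil the exact bound $m/2^k\le m/r$ at the end. The correct (and standard) move is simply to leave the points of $S\cap Z(g_j)$ unassigned: they lie on $Z(f)$, hence in no connected component of $\RR^d\setminus Z(f)$, and the theorem places no restriction on how many points of $\pts$ lie in $Z(f)$. With that change each member of $\mathcal{P}_j$ is a strict-sign part of size at most $|S|/2\le m/2^j$, and your concluding sign-pattern argument gives the stated bound verbatim.
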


We require the following generalization of Theorem \ref{th:partition}, which we prove in Section \ref{sec:hilbert} below.

\begin{theorem}\label{th:partitionV}
Let $\pts$ be a set of $n$ points in $\RR^d$ and let $V\subset\RR^n$ be an irreducible variety of degree $D$ and dimension $d^\prime$. Then there exists an $r$-partitioning polynomial $g$ for $\pts$ such that $g\notin I(V)$ and $\deg g \le \cp \cdot r^{1/d'}$, where $\cp$ depends only on $d$ and $D$.
\end{theorem}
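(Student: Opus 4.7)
The plan is to adapt the Guth--Katz proof of Theorem \ref{th:partition}, replacing the ambient polynomial space with a subspace complementary to $I(V)$ inside each $W_\Delta$, so that every bisecting polynomial produced by ham sandwich automatically avoids $I(V)$. The degree improvement from $r^{1/d}$ to $r^{1/d'}$ will come from the fact that, modulo $I(V)$, the space of polynomials of degree $\le \Delta$ has dimension of order $\Delta^{d'}$ rather than $\Delta^d$.

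Write $W_\Delta\subset\RR[x_1,\ldots,x_d]$ for the space of polynomials of degree at most $\Delta$. The main technical input I would need is a Hilbert-type lower bound
$$\dim\bigl(W_\Delta / (W_\Delta \cap I(V))\bigr) \ge c\,\Delta^{d'} \qquad \text{for all } \Delta \ge \Delta_0,$$
with $c>0$ and $\Delta_0$ depending only on $d$ and $D=\deg(V)$. Establishing this is the main obstacle, and is precisely the content to be deferred to Section \ref{sec:hilbert}: classical Hilbert polynomial theory is developed for graded ideals in polynomial rings over algebraically closed fields, so one has to pass to the complexification $V^*$ (using that $\dim V=\dim V^*$ for irreducible real varieties, as recalled in Section \ref{genDimPrelim}), work with its homogenization, and extract the leading coefficient $D/d'!$ of the Hilbert polynomial of the coordinate ring of $V^*$.

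Granting the dimension bound, I would choose a linear complement $U_\Delta\subset W_\Delta$ to $W_\Delta\cap I(V)$, so that every nonzero $f\in U_\Delta$ lies outside $I(V)$ and $\dim U_\Delta\ge c\Delta^{d'}$. The polynomial ham sandwich theorem (via Borsuk--Ulam), applied to $U_\Delta$ as a finite-dimensional space of continuous functions on $\RR^d$, then yields: for any $\dim U_\Delta-1$ finite subsets of $\RR^d$, there is a nonzero $f\in U_\Delta$ whose zero set bisects each of them. I then iterate bisection exactly as in the proof of Theorem \ref{th:partition}: at step $i=1,2,\ldots,\lceil \log_2 r\rceil$ there are at most $2^{i-1}$ current subsets of $\pts$, and I pick the smallest $\Delta_i\ge \Delta_0$ with $c\Delta_i^{d'}\ge 2^{i-1}+1$, so that $\Delta_i = O\bigl(2^{(i-1)/d'}\bigr)$. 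Applying ham sandwich inside $U_{\Delta_i}$ produces $f_i\in W_{\Delta_i}\setminus I(V)$ that simultaneously bisects all current subsets. Setting $g=\prod_i f_i$, no connected component of $\RR^d\setminus Z(g)$ contains more than $n/r$ points of $\pts$, and
$$\deg g \le \sum_{i=1}^{\lceil \log_2 r\rceil} \Delta_i = O\bigl(r^{1/d'}\bigr),$$
with implied constant $\cp$ depending only on $d$ and $D$.

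Finally, because $V$ is irreducible over $\RR$ the ideal $I(V)$ is prime (as recalled in Section \ref{genDimPrelim}), so $f_i\notin I(V)$ for every $i$ forces $g=\prod_i f_i\notin I(V)$, completing the plan. The one delicate point to watch is the passage from the complex Hilbert polynomial to a real dimension count---we really do need the image of $U_\Delta$ in the real coordinate ring to have the asserted dimension, and this is where the hypothesis that $V$ is irreducible as a real (not merely complex) variety is used, together with the primality of $I(V)\subset\RR[x_1,\ldots,x_d]$.
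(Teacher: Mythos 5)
Your proposal follows essentially the same route as the paper: a Hilbert-function lower bound $\dim\left(\RR[x_1,\ldots,x_d]_{\le\Delta}/I(V)_{\le\Delta}\right) = \Omega_{d,D}(\Delta^{d'})$ valid above a threshold depending only on $d$ and $D$ (the paper obtains this threshold from Giusti's regularity bound), a ham-sandwich step carried out inside a linear complement of $I(V)_{\le\Delta}$ (this is exactly the paper's Lemma \ref{le:specialHS}, which lifts representatives of a basis of the quotient and applies the discrete ham sandwich theorem), and the Guth--Katz bisection iteration with primality of $I(V)$ guaranteeing the product of the bisecting polynomials stays outside $I(V)$. The differences are cosmetic: you invoke Borsuk--Ulam directly on $U_\Delta$ rather than lifting and using the discrete ham sandwich theorem, and your count of the heavy cells at step $i$ should be fewer than $2^i$ rather than $2^{i-1}$, a harmless constant-factor slip.
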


\subsection{Proof of Theorem \ref{2dim}}
We now establish Theorem \ref{2dim} by proving the following more general statement.
Theorem \ref{2dim} is immediately implied by Theorem \ref{moreGeneralThm}, by taking $V$ to be $\RR^{d_1}$.

\begin{theorem}\label{moreGeneralThm}
Let $G = (P,Q,E)$ be a bipartite semi-algebraic graph in $(\mathbb{R}^{d_1},\mathbb{R}^{d_2})$ such that $E$ has complexity at most $t$, $|P| = m$, and $|Q| = n$.  Moreover, let $P\subset V$, where $V\subset\RR^{d_1}$ is an irreducible variety of dimension $e$ and degree $D$. If $G$ is $K_{k,k}$-free, then for any $\eps>0$,

\begin{equation}
|E(G)| \leq  \alpha_{1,e} m^{\frac{d_2(d_1-1)}{d_1d_2-1}+\eps}n^{\frac{d_1(d_2-1)}{d_1d_2-1}}+\alpha_2(m+n),
\end{equation}
where $\alpha_{1,e},\alpha_2$ are constants that depend on $\eps,d_1,d_2,e,t,k,$ and $D$.
\end{theorem}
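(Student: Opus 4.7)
The plan is a double induction: outer on the dimension $e=\dim V$, inner on $m$ for fixed $e$. The outer base $e=0$ forces $|P|\le D$ and gives $|E(G)|\le Dn$, absorbed into $\alpha_2 n$. The inner bases where $m$ or $n$ is extreme are handled directly by Corollary \ref{app}, whose $O(m+n)$ bound fits into $\alpha_2(m+n)$.

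For the inductive step, pick a parameter $r$ (optimized below) and apply Theorem \ref{th:partitionV} to obtain a polynomial $g\notin I(V)$ of degree $\le \cp r^{1/e}$ such that every connected component $\Omega$ of $V\setminus Z(g)$ contains at most $m/r$ points of $P$; by the Barone--Basu theorem there are $N=O_{D,e}(r)$ such cells. Write $P=P_0\sqcup\bigsqcup_\Omega P_\Omega$ with $P_0=P\cap Z(g)$ and $P_\Omega=P\cap\Omega$. Split $E(G)$ into three families. \emph{(i)} For edges incident to $P_0$, decompose $V\cap Z(g)$ into irreducible components (each of dimension $\le e-1$ and degree $\le D\deg g$) and apply the outer induction hypothesis to each. \emph{(ii)} For each cell $\Omega$, partition $Q$ into $Q_\Omega^{\mathrm{full}}=\{q:\Omega\subset E_q\}$ and $Q_\Omega^{\mathrm{cross}}=\{q:\partial E_q\cap\Omega\ne\emptyset\}$. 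The $K_{k,k}$-free assumption forces $\min(|P_\Omega|,|Q_\Omega^{\mathrm{full}}|)<k$ on each cell, so the full contributions total at most $k(m+Nn)=O(m+rn)$. \emph{(iii)} For crossing contributions, apply the inner induction hypothesis to each cell and sum via Jensen/H\"older; the key algebraic-geometric ingredient is $\sum_\Omega|Q_\Omega^{\mathrm{cross}}|=O(nr^{1-1/e})$, which follows because each $Z(f_i(\cdot,q))\cap V$ has dimension $\le e-1$ and therefore meets at most $O(r^{1-1/e})$ cells of $V\setminus Z(g)$.

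The key numerical identity is that the target exponents $f=\frac{d_2(d_1-1)}{d_1d_2-1}$ and $g=\frac{d_1(d_2-1)}{d_1d_2-1}$ satisfy $f+g/e=1$ precisely when $e=d_1$; this is the balance that makes the crossing sum reproduce $m^{f+\eps}n^g$ times a factor $r^{-\eps}$, so the recursion closes. One picks $r$ as a small positive power of $m^{d_2}/n$ so that the full-cell term $O(rn)$ is absorbed into $\alpha_{1,e}\, m^{f+\eps}n^g$, while the $r^{-\eps}$ slack on the crossing contribution accommodates the constants produced by the $P_0$ recursion. The main obstacle is propagating the outer induction uniformly: the degree $D\cdot\deg g=O(Dr^{1/e})$ of $V\cap Z(g)$ depends on $(m,n)$, so one must verify that the polynomial dependence of $\alpha_{1,e-1}$ on degree accumulates to at most $m^\eps$ across the $\le d_1$ levels of outer induction. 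A secondary bookkeeping task is to check that the chosen $r$ lies in the admissible range $1<r\le m$; outside that range, the appropriate direction of Corollary \ref{app} already yields the desired bound.
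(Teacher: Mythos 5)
Your overall architecture---double induction on $\dim V$ and on the problem size, a partitioning polynomial $g\notin I(V)$ from Theorem \ref{th:partitionV}, the three-way split into points on $Z(g)$, fully contained cells, and properly crossed cells, H\"older on the crossing sum, and Corollary \ref{app} for the extreme regimes---is exactly the paper's. The substantive divergence, and the place where a genuine gap sits, is your choice of $r$ as a small positive power of $m^{d_2}/n$. With $r$ growing in $m$, the variety $V\cap Z(g)$ handed to the outer induction has degree of order $Dr^{1/e}$, which is unbounded, so the ``constants'' $\alpha_{1,e-1},\alpha_2$ you invoke for it are not constants. To close the recursion you would have to show that every constant in the statement depends at most polynomially on the degree of the ambient variety---this includes the constant of Theorem \ref{th:partitionV} (which comes from Giusti's regularity bound and a lower bound on the leading coefficient of the Hilbert polynomial), the cell and crossing counts, and the number and degrees of the irreducible components of $V\cap Z(g)$---and then balance that growth against the gain from the dimension drop. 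You explicitly flag this as ``the main obstacle'' but do not carry it out, and nothing you cite supplies the polynomial dependence for free. The paper sidesteps the issue entirely: it takes $r$ to be a large \emph{constant} (depending on $\eps$ and the other constants, not on $m,n$) and runs the secondary induction on $m+n$ rather than extracting a power saving from $r$. Then $\deg(V\cap Z(f))$ stays bounded by a constant, the full-cell term is $O(r(m+n))=O(m+n)$ and is absorbed into $\alpha_2(m+n)$, and the H\"older factor $r^{-\eps}$ only needs to beat a constant, which a sufficiently large constant $r$ does. The cleanest repair of your argument is therefore to make $r$ constant, not to chase the degree dependence.

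Two further bookkeeping points. First, the budget $\alpha_2(m+n)$ is essentially exhausted by the per-cell induction terms together with the $Z(g)$ contribution, so the $O(m)$ and $O(rn)$ leftovers (full cells, crossing-sum lower-order terms) must be folded into the main term; this requires not only $n<m^{d_2}$ but also the dual reduction $m<n^{d_1}$ obtained from the symmetric form of Corollary \ref{app}, as in \eqref{eq:mElim}, which your outline never states. Second, your fallback ``outside the admissible range Corollary \ref{app} already yields the bound'' needs the regime split made precise: when $n\ge m^{d_2-\eps'}$ for a suitable $\eps'=\Theta(\eps)$, the bound $O(mn^{1-1/d_2}+n)$ is at most the target only because of the $+\eps$ in the exponent, and only in the complementary regime is your $r$ a genuine power of $m$, which is what your $r^{-\eps}$ needs in order to dominate the H\"older constant; with the paper's constant $r$ no such split is needed.
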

\begin{proof}[Proof of Theorem \ref{moreGeneralThm}]
As in Section \ref{warmup}, we think of the vertices of $P$ as points in $\RR^{d_1}$, and we think of the vertices of $Q$ as semi-algebraic sets in $\RR^{d_1}$. That is, every $q\in Q$ is the (semi-algebraic) set of all points $p\in \RR^{d_1}$ that satisfy
\[ \Phi(f_1(p,q)\geq 0,\ldots,f_t(p,q)\geq 0) = 1.\]

There is a bijection between the edges of $G$ and the incidences of $I(P,Q)$. Thus, it suffices to prove
\begin{equation} \label{eq:mainBound}
I(P,Q) \leq  \alpha_{1,e} m^{\frac{d_2(d_1-1)}{d_1d_2-1}+\eps}n^{\frac{d_1(d_2-1)}{d_1d_2-1}}+\alpha_2(m+n).
\end{equation}

We prove the theorem by a two-step induction process. First, we induct on $e$. We can be quite wasteful with each such induction step, since we perform at most $d_1$ such steps. Within every such step, we perform a second induction on $|P|+|Q| = m + n$. We must be more careful with the steps of the second induction, since we perform many such steps.

By Corollary \ref{app}, there exists a constant $C_\text{L2.3}$ (depending on $d_1,d_2,t,k$) such that $E(G)\le C_\text{L2.3}\left(mn^{1-1/d_2}+n\right)$. When $m\le n^{1/d_2}$ (and when $\alpha_2$ is sufficiently large) we have $|E(G)| \le \alpha_2 n$.
Therefore, in the remainder of the proof we assume that $n < m^{d_2}$, which implies
\begin{equation} \label{eq:nElim}
n = n^{\frac{d_1-1}{d_1d_2-1}} n^{\frac{d_1(d_2-1)}{d_1d_2-1}} \le m^{\frac{d_2(d_1-1)}{d_1d_2-1}} n^{\frac{d_1(d_2-1)}{d_1d_2-1}}.
\end{equation}
Since the conditions in Corollary \ref{app} are symmetric with respect to $d_1$ and $d_2$, we can replace $d_2$ with $d_1$ in the bound of the lemma. Thus, the same argument implies $m<n^{d_1}$ and hence
\begin{equation} \label{eq:mElim}
m \le m^{\frac{d_2(d_1-1)}{d_1d_2-1}} n^{\frac{d_1(d_2-1)}{d_1d_2-1}}.
\end{equation}

We now consider the base case for the induction.
If $m+n$ is sufficiently small, then \eqref{eq:mainBound} is immediately implied by choosing sufficiently large values for $\alpha_{1,e}$ and $\alpha_2$.
Similarly, when $e=0$, we again obtain \eqref{eq:mainBound} when $\alpha_{1,e}$ and $\alpha_2$ are sufficiently large.
\medskip

\noindent {\bf Partitioning.}
Next, we consider the induction step. That is, we assume that \eqref{eq:mainBound} holds when  $|P|+|Q| < m+n$ or $\dim V <e$.
By Theorem \ref{th:partitionV}, there exists an $r$-partitioning polynomial $f$ with respect to $V$ of degree at most $\cp \cdot r^{1/e}$, where $r$ is a large constant that will be set later on.
The dependencies between the various constants in the proof are
\[ 2^{1/\varepsilon},d_1,d_2,e,t,k,D \ll \cp, \cells, C_\text{L2.3}, \inter \ll \chol \ll r \ll \gamma_1, \alpha_{2} \ll \alpha_{1}.\]

Denote the cells of the partition as $\Omega_1, \ldots, \Omega_s$. Since we are working over the reals, there exists a polynomial $g$ whose degree depends only on $d_1$, $d_2$, and $D$ so that $Z(g)=V$. Thus, by \cite[Theorem A.2]{ST11}, there exists a constant $\cells$ such that $s\le \cells \cdot r$, where $\cells$ depends on $d_1,d_2,e,$ and $D$. We partition $I(P,Q)$ into the three following subsets:
\begin{itemize}[noitemsep]
\item $I_1$ consists of the incidences $(p,q) \in I(P,Q)$ where $p$ is contained in the variety $V \cap Z(f)$.
\item $I_2$ consists of the incidences $(p,q) \in I(P,Q)$ where $p$ is contained in a cell $\Omega$ of the partitioning, and the semi-algebraic set $q$ fully contains $\Omega$.
\item $I_3 = I(P,Q) \setminus \{I_1\cup I_2\}$. This is the set of incidences $(p,q) \in I(P,Q)$ such that $p$ is contained in a cell $\Omega$, and $q$ does not fully contain $\Omega$ (i.e., $q$ properly intersects $\Omega$).
\end{itemize}
Notice that we indeed have
\begin{equation}\label{eq:Iparts}
I(P,Q) = I_1 + I_2 + I_3.
\end{equation}

\noindent {\bf Bounding $\qvec{I}_\qvec{1}$.}
The points of $P\subset \RR^{d_1}$ that participate in incidences of $I_1$ are all contained in the variety $V' =V \cap Z(f)$. Set $m_0 = |P \cap V'|$. Since $V$ is an irreducible variety and $f \notin I(V)$, then $V'$ is a variety of dimension $e'\le e-1$. The intersection $V^\prime=V\cap Z(f)$ can be written as a union of $\gamma_1$ irreducible (over $\RR$) components, each of dimension at most $e'$ and degree at most $\gamma_2$, where $\gamma_1$ and $\gamma_2$ depend only on $D,C_{\textrm{part}},$ $d$, and $r$ (see e.g.~\cite{Gal}). We can now apply the induction hypothesis to each component to obtain
\begin{equation*}
I_1 \le \gamma_1 \alpha_{1,e-1} m_0^{\frac{d_2(d_1-2)}{(d_1-1)d_2-1}+\eps}n^{\frac{(d_1-1)(d_2-1)}{(d_1-1)d_2-1}}+\alpha_2(m_0+n).
\end{equation*}
Notice we have
\begin{equation}\label{bigComp}\begin{array}{ccl}
    m^{\frac{d_2(d_1-2)}{(d_1-1)d_2-1}}n^{\frac{(d_1-1)(d_2-1)}{(d_1-1)d_2-1}} &  = & m^{  \frac{d_2(d_1-2)}{(d_1-1)d_2-1} - \frac{d_2(d_1 - 1)}{d_1d_2 - 1} + \frac{d_2(d_1 - 1)}{d_1d_2 - 1}  }n^{   \frac{(d_1-1)(d_2-1)}{(d_1-1)d_2-1} - \frac{d_1(d_2 - 1)}{d_1d_2 - 1} +  \frac{d_1(d_2 - 1)}{d_1d_2 - 1} }  \\\\
      & =  & m^{-\frac{(d_2 - 1)d_2}{((d_1 - 1)d_2 - 1)(d_1d_2 - 1)} +\frac{d_2(d_1 - 1)}{d_1d_2 - 1}}   n^{  \frac{d_2 - 1}{((d_1 - 1)d_2 - 1)(d_1d_2 - 1)} + \frac{d_1(d_2 - 1)}{d_1d_2 - 1}} \\\\
       & = & m^{ -\frac{(d_2 - 1)d_2}{((d_1 - 1)d_2 - 1)(d_1d_2 - 1)}}n^{\frac{d_2 - 1}{((d_1 - 1)d_2 - 1)(d_1d_2 - 1)}}m^{\frac{d_2(d_1-1)}{d_1d_2-1}}n^{\frac{d_1(d_2-1)}{d_1d_2-1}}\\\\
      & \leq  & m^{\frac{d_2(d_1-1)}{d_1d_2-1}}n^{\frac{d_1(d_2-1)}{d_1d_2-1}},
  \end{array}
\end{equation}

\noindent where the last inequality follows from the fact that $m^{-d_2}n \leq 1$.  By applying \eqref{eq:nElim} to the $\alpha_2 n$ term and by choosing $\alpha_{1,e}$ to be sufficiently large with respect to $\alpha_{1,e-1}$, $\gamma_1$, and $\alpha_2$, we obtain
\begin{equation} \label{eq:I1}
I_1 \le \frac{\alpha_{1,e}}{2} m^{\frac{d_2(d_1-1)}{d_1d_2-1}+\eps}n^{\frac{d_1(d_2-1)}{d_1d_2-1}}+\alpha_2 m_0.
\end{equation}

\noindent {\bf Bounding $\qvec{I}_\qvec{2}$.}
Let $m' = m - m_0.$ This is the number of the points of $\pts$ that are not contained in $Z(f)$.
A cell of $\Omega_1,\ldots,\Omega_s$ that contains at most $k-1$ points of $P$ can yield at most $(k-1)n$ incidences.
Since $G$ is $K_{k,k}$-free, a cell that contains at least $k$ points of $P$ can be fully contained by at most $k-1$ of the semi-algebraic sets of $Q$. Since $s\le \cells \cdot r$, we obtain
\[ I_2 < \cells \cdot r \left((k-1)n + (k-1)m' \right). \]
By choosing $\alpha_2$ to be sufficiently large, we have
\begin{equation} \label{eq:I2}
I_2 \le \alpha_2 (m'+n).
\end{equation}

\noindent {\bf Bounding $\qvec{I}_\qvec{3}$.}
We say that a semi-algebraic set $q\in Q$ properly intersects a cell $\Omega$ if $q$ meets $\Omega$ but does not contain $\Omega$. For each $q\in Q$, we now bound the number of cells that $q$ properly intersects. Such a set $q$ is defined by at most $t$ equations, each of degree at most $t$. For $q$ to properly intersect a cell $\Omega$, at least one of these equations must define a variety that intersects $\Omega$ (this condition is necessary but not sufficient). Consider such an equation $E$ such that $Z(E)$ does not fully contain $V_i$ (since otherwise it would not properly intersect any cell). Since $V_i$ is irreducible, we have that the dimension of $Z(E)\cap V_i$ is at most $e-1$. Thus, by \cite[Theorem A.2]{ST11}, there exists a constant $\inter$ (depending on $t,d_1$) such that $Z(E)$ intersects at most $\inter r^{(e-1)/e}$ cells of the partition.
This in turn implies that every semi-algebraic set $q\in Q$ properly intersects at most $t\inter r^{(e-1)/e}$
cells of the partition.

For $1 \le i \le s$, we denote by $Q_i$ the set of elements of $Q$ that properly intersect the cell $\Omega_i$, and by $P_i$ the number of points of $P$ that are contained in $\Omega_i$. We set $m_i=|P_i|$ and $n_i=|Q_i|$. By the partitioning property, we have $m_i \le m/r$, for every $1 \le i \le s$. By the previous paragraph, we have
\begin{equation*}
\sum_{i=1}^s n_i \le n t\inter r^{(e-1)/e}.
\end{equation*}
By applying H\"older's inequality, we have
\begin{align*}
\sum_{i=1}^s n_i^{\frac{d_1(d_2-1)}{d_1d_2-1}} &\le \left(\sum_{i=1}^s n_i\right)^{\frac{d_1(d_2-1)}{d_1d_2-1}} \left(\sum_{i=1}^s 1\right)^{\frac{d_1-1}{d_1d_2-1}} \\ &\le \left(n t\inter r^{(e-1)/e}\right)^{\frac{d_1(d_2-1)}{d_1d_2-1}}\left(\cells \cdot r\right)^{\frac{d_1-1}{d_1d_2-1}} \\
&\le \chol n^{\frac{d_1(d_1-1)}{d_1d_2-1}}r^{1-\frac{d_1(d_2-1)}{e(d_1d_2-1)}} \\ &\le \chol n^{\frac{d_1(d_1-1)}{d_1d_2-1}}r^{1-\frac{d_2-1}{d_1d_2-1}},
\end{align*}
where $\chol$ depends on $t,\inter,\cells,d_1,d_2$.

By the induction hypothesis, we have
\begin{align}\label{inductionHypothesisComp}
\sum_{i=1}^s I(P_i,Q_i) &\le \sum_{i=1}^s \left(\alpha_{1,e} m_i^{\frac{(d_1-1)d_2}{d_1d_2-1}+\varepsilon}n_i^{\frac{d_1(d_2-1)}{d_1d_2-1}}+\alpha_{2}(m_i+n_i)\right) \\
&\le \alpha_{1,e} m^{\frac{(d_1-1)d_2}{d_1d_2-1}+\varepsilon}\Big(r^{\frac{(d_1-1)d_2}{d_1d_2-1}+\varepsilon}\Big)^{-1} \sum_{i=1}^s n_i^{\frac{d_1(d_2-1)}{d_1d_2-1}} + \sum_{i=1}^s\alpha_{2}(m_i+n_i) \\[2mm]
&= \alpha_{1,e} \chol\ r^{-\varepsilon}m^{\frac{(d_1-1)d_2}{d_1d_2-1}+\varepsilon}n^{\frac{d_1(d_2-1)}{d_1d_2-1}}   + \alpha_{2}\left(m+n t\inter r^{(e-1)/e}\right).
\end{align}
According to \eqref{eq:nElim} and \eqref{eq:mElim}, and when $\alpha_{1,e}$ is sufficiently large with respect to $r,t,\inter,\alpha_{2}$, we have
\[ \sum_{i=1}^s I(P_i,Q_i) \le 3\alpha_{1,e} \chol\ r^{-\varepsilon}m^{\frac{(d_1-1)d_1}{d_1d_2-1}+\varepsilon}n^{\frac{d_1(d_2-1)}{d_1d_2-1}}. \]
Finally, by choosing $r$ to be sufficiently large with respect to $\varepsilon,\chol$, we have
\begin{equation}\label{eq:I3}
I_3 = \sum_{i=1}^s I(P_i,Q_i) \le \frac{\alpha_{1,e}}{2} m^{\frac{(d_1-1)d_2}{d_1d_2-1}+\varepsilon}n^{\frac{d_1(d_2-1)}{d_1d_2-1}}.
\end{equation}

\noindent {\bf Summing up.} By combining \eqref{eq:Iparts}, \eqref{eq:I1}, \eqref{eq:I2}, and \eqref{eq:I3}, we obtain
\[ I(P,Q) \le \alpha_{1,e} m^{\frac{d_2(d_1-1)}{d_1d_2-1}+\eps}n^{\frac{d_1(d_2-1)}{d_1d_2-1}}+\alpha_2(m+n), \]
which completes the induction step and the proof of the theorem.
\end{proof}

\section{Hilbert polynomials and Theorem \ref{th:partitionV}}\label{sec:hilbert}

In this section, we will prove Theorem \ref{th:partitionV}. Our proof relies on Hilbert polynomials. Before presenting the proof, we begin with some algebraic preliminaries.

\subsection{Hilbert polynomials} \label{ssec:Hilbert}
Let $\RR[x_1,\ldots, x_d]_{\le m}$ be the set of polynomials of degree at most $m$ in $\RR[x_1,\ldots , x_d]$. Similarly, if $I \subset \RR[x_1,\ldots , x_d]$ is an ideal, let $I_{\le m} = I \cap \RR[x_1,\ldots , x_d]_{\le m}$ be the set of polynomials in $I$ of degree at most $m$.
It can be easily verified that there are $\binom{d+m}{m}$ monomials in $x_1,\ldots, x_d$ of degree $m$.
Thus, we can consider $\RR[x_1,\ldots, x_d]_{\le m}$ as a vector space of dimension $\binom{d+m}{m}$, and $I_{\le m}$ as a vector subspace of $\RR[x_1,\ldots, x_d]_{\le m}$. We consider a polynomial $f\in \RR[x_1,\ldots , x_d]_{\le m}$ as equivalent to any of its constant multiples $cf$ (where $c\in\RR\setminus \{0\}$), since their zero-sets are identical. Therefore, $\RR[x_1,\ldots, x_d]_{\le m}$ can be identified with the projective space $\RR\mathbf{P}^{\binom{d+m}{m}}$, and $I_{\le m}$ can be identified with a projective variety in $\RR\mathbf{P}^{\binom{d+m}{m}}$.

The quotient $\RR[x_1,\ldots , x_d]_{\le m}/I_{\le m}$ is also a vector space (see, e.g., \cite[Section 9.3]{CLOu}). The \emph{Hilbert function} of an ideal $I\subset \RR[x_1,\ldots, x_d]$ is defined as
\[h_I(m) = \dim \left(\RR[x_1,\ldots , x_d]_{\le m}/I_{\le m}\right).\]
A nice introduction to Hilbert functions can be found in \cite[Chapter 9]{CLOu}.

For every ideal $I\subset \RR[x_0,\ldots, x_d]$, there exists an integer $m_I$ and a polynomial $H_I(m)$ such that for every $m>m_I$ we have $h_I(m)=H_I(m)$. $H_I$ is called the \emph{Hilbert polynomial of} $I$, and $m_I$ is called the \emph{regularity} of $I$. We set $t= \deg H_I$, and say that the \emph{dimension} of $I$ is $t$. Notice that if $I\neq \{0\}$, then $t<d$. Let $a_I$ be the coefficient of the leading monomial of $H_I$.

If $V\subset\RR^d$ is an irreducible variety, then $\dim(I(V))=\dim(V)$, where $\dim(V)$ is defined in Section \ref{genDimPrelim}. Furthermore, the leading coefficient $a_I>0$ is bounded below by a constant $c_{d,\deg V}$ that depends only on $d$ and $\deg V$.

In \cite[Theorem B]{Giusti}, it is shown that the regularity $m_I$ of $I$ is bounded by a quantity $\tilde m$ that depends only on $d$ and $\deg V$.\footnote{It is important to note that Giusti's result in \cite{Giusti} applies in any field of characteristic 0. In particular, the field does not need to be algebraically closed. Giusti deals with homogeneous ideals, while we work with affine ideals. However, Giusti's bound also applies in the affine case. Giusti bounds the quantity $m_I$ in terms of the dimension $d$ and the maximum degree of the collection of polynomials needed to generate $I$. This quantity is in turn bounded by the degree of $V$.}

In particular, there is an integer $m^\prime$ depending only on $d$ and $\deg V$ so that for $m>m^\prime$, we have
\begin{equation}\label{lowerBdOnH}
h_{I(V)}(m)>\frac{c_{d,\deg V}}{2}m^{\dim V}.
\end{equation}
\subsection{Proof of Theorem \ref{th:partitionV}}

We first recall the discrete version of the \emph{ham sandwich theorem} (e.g., see
\cite{LMS94}).
A hyperplane $h$ in $\RR^d$ \emph{bisects} a finite point set $S\subset \RR^d$ if each of the two
open halfspaces bounded by $h$ contains at most $|S|/2$ points of $S$.
The bisecting hyperplane may contain any number of points of $S$.

\begin{theorem}{\bf (Discrete ham sandwich theorem)} \label{th:HamS}
Every $d$ finite point sets $S_1,\ldots,S_d \subset \RR^d$
can be simultaneously bisected by a hyperplane.
\end{theorem}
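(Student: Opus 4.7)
My plan is to deduce the discrete ham sandwich theorem from its classical measure-theoretic counterpart via a smoothing-and-limit argument. Recall that the continuous ham sandwich theorem asserts that any $d$ bounded Lebesgue-measurable sets in $\RR^d$ of finite positive measure can be simultaneously bisected (in volume) by a single hyperplane.

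First I would ``fatten'' each finite point set $S_i$ to $S_i^\eps := \bigcup_{p\in S_i} B_\eps(p)$, where $B_\eps(p)$ denotes the open ball of radius $\eps$ around $p$. For $\eps$ small enough that all these balls are pairwise disjoint (even across different indices $i$), each $S_i^\eps$ is bounded, Lebesgue-measurable, and has positive volume $|S_i|\cdot\operatorname{vol}(B_\eps)$. The continuous theorem then supplies a hyperplane $H_\eps$ that simultaneously bisects the volumes of $S_1^\eps,\ldots,S_d^\eps$.

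Next I would parametrize hyperplanes by $(v,c)\in S^{d-1}\times\RR$ via $\{x: v\cdot x = c\}$. Since the $S_i$ lie in a fixed ball, we may restrict the offsets $c$ to a bounded interval, making the parameter space compact; passing to a convergent subsequence yields a limit hyperplane $H$ as $\eps\to 0$. To verify that $H$ bisects each $S_i$ in the discrete sense, let $a_i=|S_i\cap H^+|$, $b_i=|S_i\cap H^-|$, and $c_i=|S_i\cap H|$, where $H^{\pm}$ denote the two open half-spaces. Any point $p\notin H$ sits strictly on one side, so for all sufficiently small $\eps$ the ball $B_\eps(p)$ lies entirely in the corresponding side of $H_\eps$ and contributes its full volume there; any $p\in H$ contributes at most $\operatorname{vol}(B_\eps)$ to each side. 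Consequently
\[ a_i\cdot\operatorname{vol}(B_\eps)\;\le\;\operatorname{vol}\bigl(S_i^\eps\cap H_\eps^+\bigr)\;\le\;(a_i+c_i)\cdot\operatorname{vol}(B_\eps),\]
and the volume-bisection $\operatorname{vol}(S_i^\eps\cap H_\eps^+)=\tfrac12|S_i|\operatorname{vol}(B_\eps)$ forces $a_i\le|S_i|/2\le a_i+c_i$. The symmetric bound on the other side gives $b_i\le|S_i|/2$, which is exactly the required discrete bisection of $S_i$ by $H$.

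The main (essentially only) subtlety---and the reason I detour through a continuous setup rather than attacking Borsuk--Ulam directly---is the handling of points that happen to land on the limiting hyperplane. A direct Borsuk--Ulam argument applied to the signed counting function $v\mapsto|S_i\cap H_v^+|-|S_i\cap H_v^-|$ would hit discontinuities at exactly such configurations. The smoothing detour absorbs the ambiguity into the harmless interval $[a_i,\,a_i+c_i]$, and the inequality $a_i\le|S_i|/2\le a_i+c_i$ cleanly encodes why the standard bisection convention permits any number of points to lie on the hyperplane itself.
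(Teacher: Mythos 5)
Your proof is correct, but note that the paper does not actually prove Theorem \ref{th:HamS}: it is recalled as a known result with a citation to \cite{LMS94}, and what the paper proves instead is the Hilbert-polynomial variant of the \emph{polynomial} ham sandwich theorem (Lemma \ref{le:specialHS}), which uses Theorem \ref{th:HamS} as a black box. So there is no internal argument to compare against; your smoothing-and-limit reduction to the continuous (volume) ham sandwich theorem is the standard derivation, and you handle the one genuinely delicate point correctly --- points falling on the limit hyperplane are absorbed into the slack $[a_i,a_i+c_i]$, so the conclusion $a_i\le|S_i|/2$ and $b_i\le|S_i|/2$ is exactly the ``at most $|S_i|/2$ in each open half-space'' convention the paper uses. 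Two small glosses: disjointness of the $\eps$-balls \emph{across} different indices $i$ is neither needed nor always achievable (the sets $S_i$ may share points); your volume count only requires that balls around distinct points of a single $S_i$ be disjoint, i.e.\ $\eps$ smaller than half the minimum distance between distinct points of $\bigcup_i S_i$. Also, restricting the offset $c$ to a bounded interval deserves its one-line justification: if a hyperplane misses a ball $B(0,R+1)$ containing every $S_i^\eps$, then one open half-space carries the full (positive) volume of each $S_i^\eps$, so it cannot bisect; hence $|c|\le R+1$ for the unit-normal representation, the parameter space $S^{d-1}\times[-(R+1),R+1]$ is compact, and the subsequence extraction goes through. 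With these glosses the argument is complete.
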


A polynomial $g:\mathbb{R}^d \rightarrow \mathbb{R}$ \emph{bisects} a finite
point set $S \subset \mathbb{R}^d$ if each of the two sets $\{x \in
\mathbb{R}^d: g(x)<0\}$ and $\{x \in \mathbb{R}^d: g(x)>0\}$ contains at
most $|S|/2$ points of $S$.

We combine Theorem \ref{th:HamS} with Hilbert polynomials to obtain a variant of the \emph{polynomial} ham sandwich theorem (for the original theorem, see for example \cite{GK15}).

\begin{lemma} \label{le:specialHS}
Let $V\subset\RR^d$ be an irreducible variety of dimension $d'$ and degree $D$, and let $S_1,S_2,\ldots,S_k$ be finite sets of points that are contained in $V$.
Then there exist a constant $m_0$ that depends only on $D$ and $d$, and a polynomial $g$, such that $g\notin I(V)$, $g$ bisects each of the sets $S_1,S_2,\ldots,S_k$, and
\begin{equation*}
\deg g = \left\{
\begin{array}{ll}
O_{D,d}(1), & \text{if } k < m_0, \\
O_{D,d}(k^{1/d'}),     & \text{if } k \ge m_0.
\end{array}\right.
\end{equation*}
\end{lemma}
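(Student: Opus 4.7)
The plan is to combine the classical polynomial ham sandwich technique (via a Veronese-type embedding) with the lower bound on Hilbert functions recorded in Section \ref{ssec:Hilbert}, namely $h_{I(V)}(m) > \tfrac{c_{d,\deg V}}{2}\, m^{\dim V}$ for $m > m'$. The key point is that when we work with polynomials of degree $\leq m$ \emph{modulo} $I(V)_{\leq m}$, we obtain exactly the linearly independent ``degrees of freedom'' that act nontrivially on $V$, and dualizing this against the discrete ham sandwich theorem will produce a bisector that is automatically not in $I(V)$.

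First I would fix $m$ so that $h_{I(V)}(m) \geq k+1$. Using \eqref{lowerBdOnH}, pick the threshold $m_0$ depending only on $d$ and $D$ so that for $k \geq m_0$ the choice $m = \lceil (2(k+1)/c_{d,D})^{1/d'}\rceil$ satisfies both $m > m'$ and $h_{I(V)}(m) \geq k+1$, giving $\deg g = O_{D,d}(k^{1/d'})$; for $k < m_0$ simply take $m = m'$, which already yields $h_{I(V)}(m') \geq m_0 + 1 > k$ and $\deg g = O_{D,d}(1)$. Set $N = h_{I(V)}(m)$ and pick polynomials $f_1, \ldots, f_N \in \RR[x_1,\ldots,x_d]_{\leq m}$ whose classes form a basis of $\RR[x_1,\ldots,x_d]_{\leq m}/I(V)_{\leq m}$; since $V$ is nonempty we may assume $f_1 = 1$.

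Next I would set up the embedding $\phi \colon \RR^d \to \RR^{N-1}$, $\phi(x) = (f_2(x), \ldots, f_N(x))$, and push forward the point sets to obtain multisets $\phi(S_1), \ldots, \phi(S_k) \subset \RR^{N-1}$ (padding with empty sets if $k < N-1$, which is fine since $k \leq N-1$ by construction). By the discrete ham sandwich theorem (Theorem \ref{th:HamS}) applied in $\RR^{N-1}$, there is an affine hyperplane $\sum_{i=2}^N a_i y_i = b$ that simultaneously bisects all $\phi(S_j)$. Pulling back, define
\[
g(x) = \sum_{i=2}^N a_i f_i(x) - b \cdot f_1(x).
\]
Because $\phi$ bisects each multiset $\phi(S_j)$, the sign pattern of $g$ bisects $S_j$ as a set (points collapsing under $\phi$ only makes this easier to verify). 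Also $\deg g \leq m$, which gives the claimed bound.

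The only subtle point is verifying $g \notin I(V)$: this is where I would use the crucial property that $f_1, f_2, \ldots, f_N$ were chosen to be linearly independent modulo $I(V)_{\leq m}$. If $g \in I(V)$, then $-b f_1 + \sum_{i \geq 2} a_i f_i \equiv 0 \pmod{I(V)}$, forcing $b = 0$ and $a_2 = \cdots = a_N = 0$; but this contradicts the fact that $(a_2, \ldots, a_N, b)$ defines an affine hyperplane in $\RR^{N-1}$. I do not expect any real obstacle here --- the entire proof is a clean synthesis of the ham sandwich theorem with the Hilbert-function lower bound, and the algebraic constants from \cite{Giusti} and Section \ref{ssec:Hilbert} handle the degree bound uniformly in $k$ for varieties of bounded degree.
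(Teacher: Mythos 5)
Your proposal is correct and follows essentially the same route as the paper's own proof: use the Hilbert-function lower bound \eqref{lowerBdOnH} to get a quotient $\RR[x_1,\ldots,x_d]_{\le m}/I(V)_{\le m}$ of dimension at least $k+1$ with $m=O_{D,d}(k^{1/d'})$ (or $O_{D,d}(1)$ for small $k$), map via representatives of a basis, apply the discrete ham sandwich theorem in the image space, and pull back, with $g\notin I(V)$ following from linear independence of the basis classes modulo $I(V)_{\le m}$. The only differences are cosmetic (your affine embedding with $f_1=1$ versus the paper's projective formulation, and the paper sidesteps your multiset remark by noting $\phi$ is injective on $V$), plus a harmless imprecision in the $k<m_0$ case where one should take $m$ to be a suitable constant exceeding the regularity rather than literally $m=m'$.
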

\begin{proof}
Our proof is a variant of the proof of the polynomial ham-sandwich theorem (as presented, e.g., in \cite{GK15,KMS12}). Let $I=I(V)$.
As noted in Section \ref{ssec:Hilbert}, there exists a constant $\tilde m_I$ depending only on $d$ and $D$ so that \eqref{lowerBdOnH} holds for every $m>\tilde m_i$. Thus, the vector space $\RR[x_1,\ldots , x_d]_{\le m}/I_{\le m}$ has dimension $E_m = \Omega_{d,D}(m^{d'})$.
We choose $m$ so that $E_m \ge k$. That is,
\[ k = O_{d,D}(m^{d'}), \qquad \text{ or } \qquad m=\Omega_{d,D}(k^{1/d'}). \]
If the resulting $m$ is smaller than $\tilde m$, we replace it with $h(\tilde m) = O_{D,d}(1)$.

Let $p_1,\ldots,p_{E_m}$ be a basis for the vector space $\RR[x_1,\ldots , x_d]_{\le m}/I_{\le m}$. For each $i=1,\ldots,E_m$, choose a representative $\tilde p_i\in \RR[x_1,\ldots , x_d]_{\le m}$ which lies in the equivalence class $p_i$. We will choose $\tilde p_i$ to be of smallest possible degree (note that the choice of $\tilde p_i$ need not be unique). Consider the polynomial mapping $\phi: \RR^d \to \RR\mathbf{P}^{E_m}$ defined by
 \[ \phi(x) = (\tilde p_1(x),\ldots,\tilde p_{E_m}(x)).\]

For every $1\le i \le k$, let $S_i' = \phi(S_i) \subset \RR\mathbf{P}^{E_m}$. Note that $\phi$ is injective on $V=Z(I)$, and thus $|S_i'|=S_i$. By Theorem \ref{th:HamS}, there exists a hyperplane $h \subset \RR\mathbf{P}^{E_m}$ that bisects each of the sets $S'_1,S'_2,\ldots,S'_k$. The hyperplane $h$ can be defined as $Z(a_1y_1+\ldots+a_{E_m}y_{E_m})$ for some $a_1,\ldots,a_{E_m}\in \RR$. In other words, for each $i=1,\ldots,k$, we have
\begin{equation*}
|\{y\in S^\prime_i \colon a_1y_1+\ldots+a_{E_m}y_{E_m}>0\}|\leq |S^\prime_i|/2,
\end{equation*}
and
\begin{equation*}
|\{y\in S^\prime_i \colon a_1y_1+\ldots+a_{E_m}y_{E_m}<0\}|\leq |S^\prime_i|/2.
\end{equation*}
If $x\in \RR^d$, then $a_1\tilde p_1(x)+\ldots + a_{E_m}\tilde p_{E_m}(x)=(a_1\tilde p_1+\ldots+a_{E_m}\tilde p_{E_m})(x).$ Thus, if we let $g=a_1\tilde p_1+\ldots+a_{E_m}\tilde p_{E_m}$, then $g$ is a polynomial of degree at most $m$, $g\notin I,$ and for each $i=1,\ldots,E_m$,
\begin{equation*}
\begin{split}
&|\{y\in S \colon g(y)>0\}|\leq |S|/2,\\
&|\{y\in S \colon g(y)<0\}|\leq |S|/2,
\end{split}
\end{equation*}
i.e.,~$g$ bisects each of the sets $S_1,\ldots,S_{E_m}.$
\end{proof}

The standard polynomial partitioning theorem is proved by using the polynomial ham-sandwich theorem.
Our variant of the polynomial partitioning theorem is proved by using our variant of the polynomial ham-sandwich theorem (i.e., Lemma \ref{le:specialHS}). We now recall the statement of Theorem \ref{th:partitionV}, and then prove it.
\begin{partitionVThmEnv}
Let $P$ be a set of $n$ points in $\RR^d$ and let $V\subset \RR^d$ be an irreducible variety of dimension $d'$ and degree $D$. Then there exists an $r$-partitioning polynomial $g$ for $P$ such that $g\notin I(V)$ and $\deg g = O(r^{1/d'})$. The implicit constant depends only on $D$ and $d$.
\end{partitionVThmEnv}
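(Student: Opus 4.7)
The plan is to imitate the standard derivation of the polynomial partitioning theorem from the polynomial ham--sandwich theorem (as in \cite{GK15,KMS12}), but using our Hilbert-polynomial version Lemma \ref{le:specialHS} in place of the usual polynomial ham--sandwich theorem. This will automatically produce factors outside $I(V)$, and the irreducibility of $V$ will guarantee that their product also stays outside $I(V)$.

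The construction will be iterative. Set $k=\lceil \log_2 r\rceil$. At stage $j=1,2,\ldots,k$, I will maintain a collection of at most $2^{j-1}$ point sets $S_1^{(j-1)},\ldots,S_{2^{j-1}}^{(j-1)}\subset P$, with the property that each set has at most $n/2^{j-1}$ points. Starting from $S_1^{(0)}=P$, I apply Lemma \ref{le:specialHS} to the current collection to obtain a polynomial $f_j\notin I(V)$ that simultaneously bisects all of $S_1^{(j-1)},\ldots,S_{2^{j-1}}^{(j-1)}$, with
\[
\deg f_j = \begin{cases} O_{D,d}(1), & 2^{j-1}<m_0,\\ O_{D,d}\bigl(2^{(j-1)/d'}\bigr), & 2^{j-1}\ge m_0.\end{cases}
\]
Splitting each $S_i^{(j-1)}$ into its positive and negative part with respect to $f_j$ (discarding points on $Z(f_j)$, which will end up in the zero set of the final product) yields the refined collection $S_1^{(j)},\ldots,S_{2^j}^{(j)}$, each of size at most $n/2^j$. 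After $k$ stages, every surviving set has at most $n/r$ points.

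Now set $g=f_1 f_2\cdots f_k$. Because $V$ is irreducible, $I(V)$ is a prime ideal, and since each $f_j\notin I(V)$, the product $g\notin I(V)$ as well. Any connected component $\Omega$ of $\RR^d\setminus Z(g)$ lies entirely within a single sign cell of $(f_1,\ldots,f_k)$, so it is contained in some final set $S_i^{(k)}$ and therefore contains at most $n/r$ points of $P$. For the degree estimate, the stages with $2^{j-1}<m_0$ contribute only a constant (depending on $D,d$), while the remaining stages contribute
\[
\sum_{j\,:\,2^{j-1}\ge m_0} O_{D,d}\bigl(2^{(j-1)/d'}\bigr) = O_{D,d}\bigl(2^{k/d'}\bigr) = O_{D,d}\bigl(r^{1/d'}\bigr),
\]
where the first equality is the standard geometric-series estimate (the ratio $2^{1/d'}>1$ is bounded). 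Hence $\deg g=O_{D,d}(r^{1/d'})$, and $g$ is the required $r$-partitioning polynomial.

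The only nontrivial point in the argument is already encapsulated in Lemma \ref{le:specialHS}: the Hilbert-polynomial bound is what forces the degree of each bisector to depend on $d'=\dim V$ rather than on the ambient dimension $d$. The remaining ingredients---iterating the bisection, geometric-series summation of degrees, and using primality of $I(V)$ to keep the product outside $I(V)$---are all routine. The one subtlety to flag is that in the first few iterations Lemma \ref{le:specialHS} only provides a bound of the form $O_{D,d}(1)$; I have to absorb these into the overall constant rather than apply the geometric-series bound, but this is harmless because there are only $O_{D,d}(1)$ such iterations.
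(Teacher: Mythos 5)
Your proposal is correct and follows essentially the same route as the paper: iterate the Hilbert-polynomial ham--sandwich lemma (Lemma \ref{le:specialHS}) $\lceil\log_2 r\rceil$ times, take $g$ to be the product of the bisecting polynomials, bound $\deg g$ by a geometric series that is $O_{D,d}(r^{1/d'})$ plus an $O_{D,d}(1)$ contribution from the first few stages, and use primality of $I(V)$ to keep the product outside $I(V)$. The only (harmless) difference is bookkeeping: you halve the sets attached to sign patterns of $(f_1,\ldots,f_j)$, while the paper bisects the point sets lying in the connected components of the complement of the partial product that are still too heavy; both yield the same count of sets per stage and the same degree bound.
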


\begin{proof}
In this section, all logarithms will be base 2. Let $m_0$ be the constant specified in Lemma \ref{le:specialHS}. Let $c_D$ denote the constant in the bound of Lemma \ref{le:specialHS} for the case $k < m_0$ and let $c_1$ be the constant hidden in the $\Omega$-notation of the case $k \ge m_0$. Finally, let $c_2=c_1/(1-1/2^{1/d'})$.

Let $I=I(V)$. We show that there exists a sequence of polynomials $g_0,g_1,g_2,\ldots$ with the following properties
\begin{itemize}[noitemsep]
\item $g_i \notin I$
\item For $0 \le i < \log m_0,$  $\deg g_i\leq i \cdot c_D$. For $i\ge \log m_0$, $\deg g_i\leq c_D\log m_0  + c_2 2^{i/d'}$.
\item Every connected component of $\RR^d\setminus Z(g_i)$ contains at most $m/2^i$ points of $\pts$.
\end{itemize}
If we can find such a sequence of polynomials, we can complete the proof of the theorem by setting $t=\lceil \log r \rceil$ and taking $g=g_t$.

We prove the existence of $g_0,g_1,g_2,\ldots$ by induction.
For the base case of the induction, let $g_0=1$.
For the case $1 \le i < \log m_0$, by the induction hypothesis there exists a polynomial $g_{i-1}$ of degree at most $(i-1)c_D$ such that every connected component of $\RR^d\setminus Z(g_{i-1})$ contains at most $m/2^{i-1}$ points of $\pts$.
Since $|\pts|=m$, the number of these connected components that contains more than $m/2^{i}$ points of $\pts$ is smaller than $2^{i}$. Let $S_1,\ldots,S_n \subset \pts$ be the subsets of $\pts$ that are contained in each of these connected components (that is, $|S_i|>m/2^{i}$ for each $i$, and $n<2^{i}$). By Lemma \ref{le:specialHS}, there is a polynomial $h_{i-1}\notin I$ of degree smaller than $c_0$ that simultaneously bisects every $S_i$. We can set $g_{i} = g_{i-1} \cdot h_{i-1}$, since every connected component of $\RR^d\setminus Z(g_{i-1} \cdot h_{i-1})$ contains at most $m/2^{i}$ points of $\pts$ and $g_{i-1} \cdot h_{i-1}$ is a polynomial of degree smaller than $ic_D$. Moreover, since $I$ is a prime ideal that does not contain $g_{i-1}$ and $h_{i-1}$, it also does not contain $g_{i-1} \cdot h_{i-1}$.

Next, we consider the case $\log m_0 \le i$, and analyze it similarly. That is, by the induction hypothesis there exists a polynomial $g_{i-1} \notin I$ of degree smaller than $\log m_0 c_D + c_2 2^{(i-1)/d'}$ such that every connected component of $\RR^d\setminus Z(g_{i-1})$ contains at most $m/2^{i-1}$ points of $\pts$.
Since $|\pts|=m$, the number of these connected components that contain more than $m/2^{i}$ points of $\pts$ is smaller than $2^{i}$. Let $S_1,\ldots,S_n \subset \pts$ be the subsets of $\pts$ that are contained in each of these connected components (that is, $|S_i|>m/2^{i}$ for each $i$, and $n<2^{i+1}$). By Lemma \ref{le:specialHS}, there is a polynomial $h_{i-1}\notin I$ of degree smaller than $c_1 2^{i/d'}$ that simultaneously bisects every $S_i$. We can set $g_{i} = g_{i-1} \cdot h_{i-1}$, since every connected component of $\RR^d\setminus Z(g_{i-1} \cdot h_{i-1})$ contains at most $m/2^{i}$ points of $\pts$.
Moreover, $g_{i-1} \cdot h_{i-1}$ is a polynomial of degree smaller than
\begin{align*}
c_D\log m_0  + c_2 2^{(i-1)/d'} + c_1 2^{i/d'} &= c_D\log m_0 + 2^{i/d'} \left(\frac{c_2}{2^{1/d'}} + c_1 \right) \\
&= c_D\log m_0  + c_2 2^{i/d'}.
\end{align*}
This completes the induction step, and thus also the proof of the theorem.
\end{proof}

\section{Applications}\label{applicationSection}

\subsection{Incidences with algebraic varieties in $\RR^d$}
The following theorem is a variant of a well known incidence bound in the plane.
\begin{theorem} \label{th:PS} {\bf (Pach and Sharir \cite{PS92,PS98})}
Let $\pts$ be a set of $m$ points and let $\curves$ be a set of $n$ constant-degree algebraic curves, both in $\RR^2$, such that the incidence graph of $\pts\times\curves$ does not contain a copy of $K_{s,t}$.
Then
$$I(\pts,\curves) = O\left(m^{s/(2s-1)}n^{(2s-2)/(2s-1)}+m+n\right),$$
where the implicit constant depends on $s,t,$ and the maximum degree of the curves.
\end{theorem}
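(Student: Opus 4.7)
The plan is to recover the Pach--Sharir bound by adapting the planar argument from Section \ref{warmup}, substituting the K\H ov\'ari--S\'os--Tur\'an (KST) bound induced by $K_{s,t}$-freeness in place of the VC-dimension bound of Corollary \ref{app}. A direct appeal to Theorem \ref{th:IncRd} (with $d=2$) would yield the same exponents but with an additional $n^\varepsilon$ factor; the $\varepsilon$-free form requires the more delicate planar cutting argument.

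As a baseline, KST applied to the $K_{s,t}$-free incidence graph gives
\[ I(\pts,\curves) \le c_0\bigl(mn^{1-1/s} + n\bigr), \qquad c_0 = c_0(s,t). \]
In the degenerate range $m \le n^{1/s}$ this already matches the desired $O(n)$ bound, and symmetrically one disposes of $n \le m^{1/s}$. Otherwise, set $r = \lceil m^{s/(2s-1)} n^{-1/(2s-1)} \rceil$ and apply Lemma \ref{cut2} to $\curves$, decomposing $\RR^2$ into at most $\cells r^2$ open cells, each crossed by at most $n/r$ curves. By pigeonhole, some cell $\Omega$ contains at least $m/(\cells r^2)$ points of $\pts$. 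Crucially, since the curves are one-dimensional they cannot contain the two-dimensional cell $\Omega$, so every curve incident to a point of $\pts \cap \Omega$ must already cross $\Omega$ and hence belongs to the family of at most $n/r$ curves bounded above.

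Applying the KST baseline inside $\Omega$ furnishes a point $p \in \pts \cap \Omega$ whose degree among the curves crossing $\Omega$ is $O\bigl((n/r)^{1-1/s} + (n/r)/|\pts \cap \Omega|\bigr)$; $K_{s,t}$-freeness forces the total degree of $p$ in $\curves$ to exceed this by at most $t-1$. Removing $p$ and iterating, exactly as in the proof of \eqref{d1d22Bd}, yields contributions of order $n^{2(s-1)/(2s-1)} \cdot i^{-(s-1)/(2s-1)}$ per point when $i$ points remain, and summing this over $i$ gives
\[ I(\pts,\curves) = O\bigl(m^{s/(2s-1)} n^{2(s-1)/(2s-1)} + m + n\bigr). \]

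The main obstacle is the bookkeeping during the iteration: $r$ must be refreshed as points are removed, and the base cases where the selected cell contains fewer than $s$ points (or where $r$ would drop below $1$) must be routed cleanly through the KST baseline and the dual bound $O(m)$. A secondary nuisance is that points of $\pts$ lying on the boundary set of the cutting fall outside every open cell; both complications are handled exactly as in Section \ref{warmup}, with the numerical exponent $1/s$ playing the role that $1/2$ did there, so the estimate carries over with only cosmetic changes.
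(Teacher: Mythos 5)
You should first note that the paper never proves Theorem \ref{th:PS}: it is quoted from Pach and Sharir as background for Theorem \ref{th:IncRd}, so there is no in-paper proof to compare with. Your outline is the standard cutting-based Pach--Sharir argument recast in the mold of Section \ref{warmup}, with the K\H ov\'ari--S\'os--Tur\'an bound playing the role of Corollary \ref{app}, and the bookkeeping of the main iteration is correct: with $r\approx m^{s/(2s-1)}n^{-1/(2s-1)}$ the rich cell satisfies $|Q'|\lesssim |P'|^s$, so KST inside the cell gives a point of degree $O\bigl(n^{(2s-2)/(2s-1)}i^{-(s-1)/(2s-1)}\bigr)+(t-1)$ when $i$ points remain, and the sum gives the main term. (One small imprecision: Lemma \ref{cut2} does not promise two-dimensional cells, so a curve \emph{can} contain a cell; but your $+(t-1)$ fallback via $K_{s,t}$-freeness, valid once the cell holds at least $s$ points, handles exactly this, as in Section \ref{warmup}.)

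The genuine gap is in the degenerate regime that produces the $+m$ term. KST is not symmetric in $s$ and $t$: for a $K_{s,t}$-free incidence graph the two available bounds are $O(mn^{1-1/s}+n)$ and $O(nm^{1-1/t}+m)$, so the ``dual bound $O(m)$'' you invoke exists only when $n\lesssim m^{1/t}$ --- not when $n\le m^{1/s}$, and not in the range you actually need. The case you must handle is when the richest cell contains fewer than $s$ points, which happens precisely when the current number of points is $\gtrsim n^2$, i.e.\ $n=O(m^{1/2})$; there the theorem asserts $I=O(m)$, while for $t\ge 3$ the dual KST bound is superlinear in $m$ through most of that range. This is not a fixable constant: abstract $K_{s,t}$-free bipartite graphs with $n\approx m^{1/2}$ can genuinely have superlinearly many edges (subsample one side of the norm graphs of Koll\'ar--R\'onyai--Szab\'o, or take a $2$-fold pair packing for $s=2$), so no purely combinatorial step can close this case --- the geometry of the curves must enter, and your argument never uses it. The standard repair is B\'ezout plus component bookkeeping: two curves of degree at most $D$ with no common component meet in at most $D^2$ points, and any irreducible component lying on $t$ or more curves of $\curves$ contains at most $s-1$ points of $\pts$ by $K_{s,t}$-freeness; a short count over components then yields $I(\pts,\curves)=O_{D,s,t}(m+n^2)$, which is $O(m)$ exactly in the needed range and lets your iteration terminate. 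With that lemma substituted for the ``dual KST'' step, your proof goes through; as written, the base case is unjustified.
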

While this bound is not tight for many cases, such as incidences with circles or with parabolas, it is the best known \emph{general} incidence bound in $\RR^2$. Building off the results in \cite{clarkson}, Zahl introduced the following three-dimensional variant of this bound:.
\begin{theorem} \label{th:ZahlR3} {\bf (Zahl \cite{Zahl13})}
Let $\pts$ be a set of $m$ points and let $\vrts$ be a set of $n$ smooth constant-degree algebraic varieties, both in $\RR^3$, such that the incidence graph of $\pts\times\vrts$ does not contain a copy of $K_{s,t}$. Then
$$I(\pts,\vrts) = O\left(m^{2s/(3s-1)}n^{(3s-3)/(3s-1)}+m+n\right),$$
where the implicit constant depends on $s,t,$ and the maximum degree of the varieties.
\end{theorem}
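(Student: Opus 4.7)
The plan is to mimic the polynomial partitioning argument developed in Section \ref{polyMethodSection}, specialized to $\RR^3$, and to use the smoothness hypothesis to avoid the $\varepsilon$ loss that appears in Theorem \ref{moreGeneralThm}. I would realize the incidence graph as a $K_{s,t}$-free semi-algebraic bipartite graph in $(\RR^3,\RR^{d_2})$, where $d_2$ is the number of parameters needed to describe a variety in $\vrts$, and then apply Theorem \ref{th:partition} to the point set $\pts$. Throughout the argument I would assume $n \le m^3$ and $m \le n^3$, since the complementary regimes are handled by Corollary \ref{app}.

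Set a large parameter $r$ (to be optimized at the end as a power of $m$ and $n$) and choose an $r$-partitioning polynomial $f \in \RR[x_1,x_2,x_3]$ for $\pts$ of degree $O(r^{1/3})$. The incidences decompose into three parts analogous to the proof of Theorem \ref{moreGeneralThm}: $I_1$, incidences with points lying on $Z(f)$; $I_2$, incidences where some $V\in\vrts$ fully contains the cell of $\RR^3\setminus Z(f)$ that contains $p$; and $I_3$, incidences where $V$ properly crosses the cell containing $p$. The bound on $I_2$ is immediate: a cell with $\ge s$ points of $\pts$ lies in at most $t-1$ varieties, by the $K_{s,t}$-free hypothesis. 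For $I_3$, the partitioning property bounds points per cell by $m/r$, a B\'ezout-type estimate bounds the number of cells each $V\in\vrts$ crosses by $O(r^{2/3})$, and H\"older's inequality together with the inductive hypothesis on each cell produces the term $O(m^{2s/(3s-1)}n^{(3s-3)/(3s-1)})$ after optimizing $r$.

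The main obstacle is the $I_1$ term, where the points lie on the two-dimensional variety $Z(f)$. Iterating Theorem \ref{th:partitionV} here would introduce the $\varepsilon$ loss that appears in Theorem \ref{moreGeneralThm}; to recover Zahl's clean exponent one must exploit smoothness. The idea is to decompose $Z(f)$ into its $O(r^{2/3})$ irreducible components and to handle two regimes separately. First, for each $V\in\vrts$ that is entirely contained in $Z(f)$, smoothness prevents $V$ from meeting the singular locus of $Z(f)$ transversally, so at most $O_{s,t,\deg}(1)$ such varieties can share any common component of $Z(f)$ under the $K_{s,t}$-free hypothesis, contributing at most a linear term. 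Second, for each $V\in\vrts$ meeting $Z(f)$ in a proper subvariety, smoothness of $V$ combined with the structure of the transversal intersection $V\cap Z(f)$ reduces the problem to a planar incidence problem on each two-dimensional component, to which Theorem \ref{th:PS} applies directly and yields $O(m^{s/(2s-1)} n^{(2s-2)/(2s-1)}+m+n)$, a term dominated by the target bound.

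The hardest step is the careful bookkeeping in $I_1$: one must verify that, after restricting to a smooth two-dimensional component of $Z(f)$, the induced bipartite incidence graph still satisfies a $K_{s,t'}$-free condition with $t'$ controllable in terms of $s, t,$ and the degrees, so that the planar Pach--Sharir bound can be invoked without losing control of constants when summing over the $O(r^{2/3})$ components. Once this is done, summing the three contributions and choosing $r$ to balance $I_1$ against $I_3$ gives the claimed exponents; the smoothness hypothesis is what permits the planar bound to replace a recursive application of Theorem \ref{th:partitionV}, and hence removes the $\varepsilon$.
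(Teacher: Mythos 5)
First, note that the paper does not prove this statement at all: Theorem \ref{th:ZahlR3} is quoted from Zahl \cite{Zahl13} as background, and the paper's own machinery (Theorem \ref{th:IncRd}, via Theorem \ref{moreGeneralThm} and Theorem \ref{th:partitionV}) only recovers it with an extra $m^{\eps}$ loss. So your proposal is necessarily a reconstruction of Zahl's argument rather than of anything in this paper, and it has a genuine gap exactly at the point where the $\eps$ would have to be removed, namely the treatment of $I_1$, the incidences on the partitioning surface $Z(f)$.

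The specific failure is your claim that, for varieties meeting $Z(f)$ properly, the problem ``reduces to a planar incidence problem on each two-dimensional component, to which Theorem \ref{th:PS} applies directly.'' Theorem \ref{th:PS} is a statement about \emph{constant-degree} curves in $\RR^2$, with an implicit constant depending on that degree. Here the relevant objects are the curves $V\cap Z(f)$ for $V\in\vrts$: they live on a two-dimensional algebraic surface in $\RR^3$, not in the plane, and their degree is $O(\deg V\cdot r^{1/3})$, which is \emph{not} constant, since in your scheme $r$ is chosen as a power of $m$ and $n$. A generic projection to a plane does not fix this: the degrees still grow with $r$, two such curves can meet in $\Theta(r^{2/3})$ points (so $K_{s,t}$-freeness and the ``no shared component'' hypothesis need separate justification), and the degree-dependence of the Pach--Sharir constant cannot be absorbed. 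Handling points-on-the-partitioning-surface is precisely the crux of Zahl's theorem; in \cite{Zahl13} it is done with a second-level partitioning/cutting argument on $Z(f)$ in the spirit of \cite{clarkson}, with the dependence on $\deg f$ tracked explicitly and with smoothness used to control the intersection and tangency structure of pairs of surfaces --- not in the way you invoke it. Your other use of smoothness (``smoothness prevents $V$ from meeting the singular locus of $Z(f)$ transversally, so at most $O(1)$ contained varieties share a component'') is also not a correct argument: containment of components is controlled by the $K_{s,t}$-free hypothesis alone, and smoothness of the $V$'s gives no such counting statement. Finally, if instead you retreat to constant $r$ and induction on dimension as in Theorem \ref{moreGeneralThm}, the $\eps$ loss reappears; so as written the proposal does not yield the clean exponent $m^{2s/(3s-1)}n^{(3s-3)/(3s-1)}$.
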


Very recently, Basu and Sombra \cite{BasuSombra14} obtained a similar bound in $\RR^4$.
\begin{theorem} \label{th:Basombra} {\bf (Basu and Sombra \cite{BasuSombra14})}
Let $\pts$ be a set of points and let $\vrts$ be a set of constant-degree algebraic varieties, both in $\RR^4$, such that the incidence graph of $\pts\times\vrts$ does not contain a copy of $K_{s,t}$. Then
$$I(\pts,\vrts) = O\left(|\pts|^{3s/(4s-1)}|\vrts|^{(4s-4)/(4s-1)}+|\pts|+|\vrts|\right),$$
where the implicit constant depends on $s,t,$ and the maximum degree of the varieties.
\end{theorem}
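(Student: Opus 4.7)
The plan is to imitate the polynomial-partitioning scheme used in the proof of Theorem \ref{moreGeneralThm}, specialized to the case $d_1=d_2=4$. The only substantive difference is that the statement here has no $\varepsilon$ loss in the exponents, so the treatment of incidences with points on the partitioning hypersurface must be sharpened compared to the argument of Theorem \ref{moreGeneralThm}. The cleanest way to engineer this sharpening is to replace the generic inductive step on lower-dimensional subvarieties with a call to a matching lower-dimensional incidence theorem (Theorem \ref{th:ZahlR3} when we drop from $\RR^4$ to a $3$-dimensional variety, and Theorem \ref{th:PS} when we drop to a $2$-dimensional one).

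I would set up a double induction: the outer induction is on the dimension $e$ of an ambient irreducible variety $V\subset\RR^4$ containing $\pts$ (with $V=\RR^4$ and $e=4$ at the outermost call), and the inner induction is on $|\pts|+|\vrts|$. At each step, apply Theorem \ref{th:partitionV} with a parameter $r$ (to be chosen at the end) to obtain a partitioning polynomial $f\notin I(V)$ of degree $O(r^{1/e})$, subdividing $V\setminus Z(f)$ into $O(r)$ cells each meeting at most $|\pts|/r$ points of $\pts$. Decompose the incidences as $I(\pts,\vrts)=I_1+I_2+I_3$, where $I_1$ counts incidences with $p\in Z(f)\cap V$, $I_2$ counts pairs $(p,q)$ in which the variety $q$ fully contains the cell of $p$, and $I_3$ counts pairs with $q$ properly intersecting the cell of $p$.

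The bound on $I_2$ follows immediately from the $K_{s,t}$-free hypothesis: a cell containing at least $s$ points of $\pts$ is fully contained in at most $t-1$ varieties of $\vrts$, so $I_2=O(r(m+n))$. For $I_3$, the Bezout-type estimate of \cite[Theorem A.2]{ST11} ensures that each constant-degree variety in $\vrts$ properly meets at most $O(r^{(e-1)/e})$ cells. Applying the inner induction hypothesis cell by cell, aggregating via H\"older's inequality, and choosing $r$ polynomially in $m$ and $n$ so as to equalize the two dominant terms, $I_3$ contributes at most half of the claimed bound.

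The main obstacle is the term $I_1$, whose points lie on the lower-dimensional subvariety $Z(f)\cap V$. A direct appeal to Theorem \ref{th:IncRd} would produce an unwanted $\varepsilon$ loss. Instead, I would run the outer induction on each irreducible component of $Z(f)\cap V$ (each of dimension at most $e-1$, with degree and number of components uniformly bounded in terms of $\deg V$ and $\deg f$ via \cite{Gal}), until the ambient dimension reaches $3$ or $2$, at which point Theorem \ref{th:ZahlR3} or Theorem \ref{th:PS} is invoked as an $\varepsilon$-free base case. The key elementary observation that makes this recursion close is that, in the polynomially related regime, the target exponent combination $m^{(d-1)s/(ds-1)}n^{d(s-1)/(ds-1)}$ strictly improves as the ambient dimension $d$ decreases, so $I_1$ is automatically subdominant. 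The hardest part of the argument is maintaining uniform control over the number, degree, and in particular the possibly singular stratification of the irreducible components of $Z(f)\cap V$ throughout the recursion, so that one may legitimately apply Zahl's bound (whose hypothesis requires smoothness) after a suitable decomposition into smooth strata; this is precisely the technical heart of the Basu--Sombra approach.
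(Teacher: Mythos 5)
First, note that the paper does not prove this statement at all: Theorem \ref{th:Basombra} is quoted from Basu and Sombra \cite{BasuSombra14} as motivation, and the machinery developed in Sections \ref{polyMethodSection} and \ref{sec:hilbert} only yields the weaker, $\eps$-lossy version (Theorem \ref{th:IncRd} with $d=4$). So you are attempting something strictly stronger than what the paper's method gives, and the two places where you deviate from Theorem \ref{moreGeneralThm} are exactly where the sketch breaks down. The central gap is in your treatment of $I_3$: if you ``apply the inner induction hypothesis cell by cell'' with the sharp exponents $a=\frac{(d-1)s}{ds-1}$, $b=\frac{d(s-1)}{ds-1}$, then after H\"older the powers of $r$ cancel identically (this is the computation following \eqref{inductionHypothesisComp}: $-a+1-b/d=0$), and you are left with $\alpha_1\,\chol\, m^{a}n^{b}$ where $\chol>1$ is a fixed constant. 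No choice of $r$, constant or polynomial in $m,n$, makes this at most a fraction of $\alpha_1 m^a n^b$; the factor $r^{-\eps}$ is precisely what the paper uses to absorb $\chol$, which is why the $\eps$ appears there and cannot simply be deleted. To avoid recursion one must instead bound the cell contributions by a non-recursive estimate (K\H ov\'ari--S\'os--Tur\'an inside each cell) with $r$ a suitable power of $m$ and $n$ --- your phrase about ``equalizing the two dominant terms'' belongs to that argument, not to the inductive one, and the two cannot be mixed.

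But once $r$ is polynomial in $m$ and $n$, your treatment of $I_1$ collapses: the partitioning polynomial has degree $O(r^{1/4})$, so $Z(f)\cap V$ is no longer a constant-degree variety, its number of irreducible components and their degrees grow with $r$, and the proposed $\eps$-free base cases are unavailable --- Theorem \ref{th:PS} and Theorem \ref{th:ZahlR3} concern constant-degree varieties in $\RR^2$ and $\RR^3$ (with implicit constants depending on that degree, and with a smoothness hypothesis in Zahl's case), not points and varieties lying on a high-degree threefold or surface inside $\RR^4$; even invoking them requires a projection or chart argument you do not supply. The heuristic that ``the exponent improves as the dimension drops, so $I_1$ is automatically subdominant'' is therefore unjustified in the only regime where the $I_3$ bound can work; indeed all of $\pts$ may lie on $Z(f)$, and controlling incidences on the partitioning hypersurface (via a second-level partition, smooth stratification, and degree-sensitive bounds) is exactly the technical heart of Zahl's and Basu--Sombra's proofs. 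Your final sentence acknowledges this but defers it, so the essential content of the theorem is not proved by the proposal.
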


When looking at the bounds of Theorems \ref{th:PS}, \ref{th:ZahlR3}, and \ref{th:Basombra}, a pattern emerges. An easy variant of our technique in Sections \ref{polyMethodSection} and \ref{sec:hilbert} yields the bound coming from this pattern (up to an extra $\eps$ in the exponent).

\begin{IncRdThmEnv}
Let $\pts$ be a set of $m$ points and let $\vrts$ be a set of $n$ constant-degree algebraic varieties, both in $\RR^d$, such that the incidence graph of $\pts\times\vrts$ does not contain a copy of $K_{s,t}$ (here we think of $s,t,$ and $d$ as being fixed constants, and $m$ and $n$ are large). Then for every $\eps>0$, we have
$$I(\pts,\vrts) = O\left(m^{\frac{(d-1)s}{ds-1}+\eps}n^{\frac{d(s-1)}{ds-1}}+m+n\right).$$
\end{IncRdThmEnv}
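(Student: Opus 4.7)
The plan is to adapt the inductive polynomial-partitioning proof of Theorem \ref{moreGeneralThm} to the setting of constant-degree varieties in place of semi-algebraic sets parametrized by points of $\RR^{d_2}$. In the proof of Theorem \ref{moreGeneralThm} the parameter $d_2$ enters in two essentially independent ways: in the KST-style base bound from Corollary \ref{app}, and in the target exponents $\frac{d_2(d_1-1)}{d_1 d_2-1}$, $\frac{d_1(d_2-1)}{d_1 d_2-1}$. For Theorem \ref{th:IncRd} I would set $d_1 = d$ and let $s$ play the role of $d_2$. Although the varieties in $\vrts$ have no natural $s$-dimensional parameter space, the number $s$ still enters in both places: in the base case because $K_{s,t}$-freeness gives a KST exponent of $1/s$, and in the target exponents verbatim from the statement.

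Following the structure of Theorem \ref{moreGeneralThm}, I would prove the more general statement: whenever $\pts$ lies in an irreducible real variety $V \subset \RR^d$ of dimension $e \le d$ and degree at most $D$, we have $I(\pts,\vrts) \le \alpha_{1,e}\, m^{\frac{(d-1)s}{ds-1}+\eps} n^{\frac{d(s-1)}{ds-1}} + \alpha_2(m+n)$. This is proved by outer induction on $e$ and inner induction on $m+n$; Theorem \ref{th:IncRd} comes from taking $V = \RR^d$, $e = d$. Two KST-style bounds absorb the degenerate regimes. Applying KST directly to the $K_{s,t}$-free incidence graph gives $I(\pts,\vrts) = O(m n^{1-1/s} + n)$, which is enough when $m \le n^{1/s}$ and otherwise yields an inequality of the form $n \le m^{\frac{(d-1)s}{ds-1}} n^{\frac{d(s-1)}{ds-1}}$ analogous to \eqref{eq:nElim}. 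Symmetrically, applying (the cosmetically $K_{s,t}$-free version of) Theorem \ref{weak} to the set system on $\vrts$ induced by $\pts$, whose primal shatter function is $O(z^d)$ by Milnor-Thom on the arrangement of $z$ varieties in $\RR^d$, gives $I(\pts,\vrts) = O(n m^{1-1/d} + m)$, which takes care of $n \le m^{1/d}$ and yields the analog of \eqref{eq:mElim}.

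For the inductive step I would apply Theorem \ref{th:partitionV} with a large constant $r$, producing a polynomial $f \notin I(V)$ of degree $O(r^{1/e})$ whose complement inside $V$ consists of $O(r)$ cells, each containing at most $m/r$ points of $\pts$. Partition the incidences as $I = I_1 + I_2 + I_3$ just as in the proof of Theorem \ref{moreGeneralThm}. The bound on $I_1$ comes from decomposing $V \cap Z(f)$ into $O_{D,r}(1)$ irreducible components of dimension at most $e-1$, applying the outer induction hypothesis to each, and invoking the same arithmetic identity as \eqref{bigComp}. The bound on $I_2$ is where $K_{s,t}$-freeness is used: a cell containing at least $s$ points of $\pts$ can be fully contained in at most $t-1$ varieties of $\vrts$, and cells containing fewer than $s$ points contribute $O((s-1) r \cdot n) = O(n)$ incidences in total since $r$ is a constant. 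The bound on $I_3$ uses that each constant-degree variety in $\vrts$ properly intersects $O(r^{(e-1)/e})$ cells, followed by Hölder's inequality on $\sum_i n_i^{d(s-1)/(ds-1)}$; with $d_1 = d$, $d_2 = s$ the exponent bookkeeping from Theorem \ref{moreGeneralThm} reproduces a factor of $r^{-\eps}$, which is made less than $\tfrac{1}{2}\alpha_{1,e}$ by choosing $r$ sufficiently large.

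The main obstacle, as I see it, is mostly bookkeeping: one must verify that every use of $K_{k,k}$-freeness in Theorem \ref{moreGeneralThm} and in the underlying Theorem \ref{weak} and Corollary \ref{app} can be replaced by the asymmetric $K_{s,t}$-free hypothesis, with the correct side of the pair playing the role of $k$. The substantive substitution is in the $I_2$ step, where the threshold becomes $s$ points per cell and the multiplicity becomes $t-1$. Once these substitutions are made consistently, the Hölder calculation produces exactly the exponents $\frac{(d-1)s}{ds-1}$ and $\frac{d(s-1)}{ds-1}$ from the target bound, and the result follows.
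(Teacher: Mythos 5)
Your proposal is correct in outline and follows the paper's own strategy: a double induction (outer on the dimension of the containing variety, inner on $m+n$), partitioning on the variety via Theorem \ref{th:partitionV}, a three-way split of the incidences, K\H ov\'ari--S\'os--Tur\'an for the degenerate ranges, and the same H\"older computation. The one genuine divergence is how the two sides of the degenerate range are handled. The paper replaces Corollary \ref{app} by KST only in the point-over-variety direction, concedes that the analogue of \eqref{eq:mElim} is then unavailable, and compensates by changing the partition of incidences (its $I_1$ keeps only pairs where the surface properly intersects every component of $V\cap Z(f)$ through the point, its $I_2$ consists of pairs where a component through the point is fully contained in the surface, bounded directly by $K_{s,t}$-freeness) and by adding the hypothesis that no surface contains $V$ to the induction statement (Theorem \ref{varietiesInductThm}), whose exponents moreover sharpen with $e$. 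You instead recover the \eqref{eq:mElim} analogue directly, by applying Theorem \ref{weak} with the roles of the two sides swapped, using Milnor--Thom to bound the primal shatter function of the set system $\{N(p)\subset\vrts\colon p\in\pts\}$ by $O(z^d)$ and the fact that $K_{s,t}$-freeness implies $K_{k,k}$-freeness for $k=\max(s,t)$; this yields $I=O(nm^{1-1/d}+m)$, hence either $I=O(m)$ or $m\le n^d$, which is exactly what is needed. With that inequality in hand you can legitimately keep the original partition of Theorem \ref{moreGeneralThm}: surfaces containing whole cells (including any surface containing $V$) are caught by your $I_2$ via $K_{s,t}$-freeness, surfaces containing a component of $V\cap Z(f)$ are pushed into the outer induction, and your $d$-based exponents survive the H\"older step because $\frac{(d-1)s}{ds-1}+\frac{1}{e}\cdot\frac{d(s-1)}{ds-1}\ge 1$ for all $e\le d$. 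So your route buys a cleaner induction statement (no ``no surface contains $V$'' hypothesis, no re-partition), at the price of proving and invoking the dual shatter bound, while the paper's route avoids that bound but needs the modified $I_1/I_2$ split and an $e$-dependent formulation. Two small bookkeeping points to tighten: your invocation of \eqref{bigComp} in the $I_1$ step is unnecessary (it is only needed when, as in the paper, the exponents improve as $e$ drops), and the $\gamma_1$ irreducible components multiply the linear term coming from the outer induction, so either let the linear coefficient depend on $e$ (as the paper's $\alpha_{2,e}$ does) or absorb that term into the leading one using the very inequality $m\le m^{\frac{(d-1)s}{ds-1}}n^{\frac{d(s-1)}{ds-1}}$ you recovered.
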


Theorem \ref{th:IncRd} improves upon a weaker bound that was obtained by Elekes and Szab\'o \cite{ESz12}. In addition to generalizing Theorem \ref{th:PS}, Theorem \ref{th:ZahlR3}, and Theorem \ref{th:Basombra}, Theorem \ref{th:IncRd} generalizes various other incidence bounds to $\RR^d$ (again, up to an extra $\eps$ in the exponent). For example, Edelsbrunner, Guibas, and Sharir \cite{EGS90} considered point-plane incidences in $\RR^3$, where no three points are collinear. Theorem \ref{th:IncRd} generalizes this result to $\RR^d$, where no $d$ points are contained in a common $(d-2)$-flat. A further generalization is to other types of hypersurfaces, such as spheres.

As shown in \cite{Sheffer15}, when $s=2$ Theorem \ref{th:IncRd} is tight up to subpolynomial factors. Specifically, \cite{Sheffer15} presents lower bounds for the cases of hyperplanes, hyperspheres, and paraboloids with no $K_{2,t}$ in the incidence graph.

\begin{proof}[Proof sketch of Theorem \ref{th:IncRd}]
The proof is very similar to the proof of Theorem \ref{moreGeneralThm}, so here we only explain how to change the original proof. As with Theorem \ref{moreGeneralThm}, Theorem \ref{th:IncRd} comes from the following generalization.
\begin{theorem}\label{varietiesInductThm}
Let $\pts$ be a set of points and let $\vrts$ be a set of constant-degree algebraic varieties, both in $\RR^d$, with $|\pts|=m,\ |\vrts|=n$, such that the incidence graph of $\pts\times\vrts$ does not contain a copy of $K_{s,t}$. Suppose that $\pts$ is fully contained in an irreducible variety $V$ of dimension $e$ and degree $D$. Suppose furthermore that no surface $S\in \vrts$ contains $V$. Then for every $\eps>0$, we have
\begin{equation}\label{IBdVarieties}
I(\pts,\vrts) \leq \alpha_{1,e}m^{\frac{(e-1)s}{es-1}+\eps}n^{\frac{e(s-1)}{es-1}}+\alpha_{2,e}(m+n),
\end{equation}
where $\alpha_{1,e}$ and $\alpha_{2,e}$ are constants that depend on $\eps, d, e, s, t,$ and $D$.
\end{theorem}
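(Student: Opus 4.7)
The plan is to mirror the proof of Theorem \ref{moreGeneralThm} essentially step for step, under the substitution $d_1\leftrightarrow e$ and $d_2\leftrightarrow s$. I would perform a double induction, outer on $e$ (the dimension of the irreducible variety $V$ containing $\pts$) and inner on $|\pts|+|\vrts|$. The base cases $e=0$ and $m+n$ small are handled by taking $\alpha_{1,e}$ and $\alpha_{2,e}$ sufficiently large. In the degenerate regimes $n>m^s$ or $m>n^e$, a $K_{s,t}$-free variant of Corollary \ref{app} (each constant-degree variety in $\vrts$ defines a semi-algebraic neighborhood whose dual shatter function is still $O(z^s)$ via Milnor-Thom) supplies the bound $O(mn^{1-1/s}+n)$ or $O(m^{1-1/e}n+m)$, which in those regimes is absorbed by $\alpha_{2,e}(m+n)$ after the same exponent manipulation as in \eqref{eq:nElim}--\eqref{eq:mElim}.

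For the inductive step, apply Theorem \ref{th:partitionV} with the variety $V$ and a large constant $r$ to obtain a polynomial $f\notin I(V)$ of degree $O(r^{1/e})$ whose zero set partitions $\RR^d$ into $O(r)$ cells $\Omega_1,\dots,\Omega_N$, each meeting $\pts$ in at most $m/r$ points. Split the incidences as in the proof of Theorem \ref{moreGeneralThm}: $I_1$ counts pairs $(p,W)$ with $p\in V\cap Z(f)$, $I_2$ counts those with $p\in\Omega_i$ and $\Omega_i\subset W$, and $I_3$ counts those with $p\in\Omega_i$ and $W$ properly intersecting $\Omega_i$.

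I would bound each piece as follows. For $I_1$, decompose $V\cap Z(f)$ into its $O(1)$ irreducible components (via \cite[Theorem A.2]{ST11}), discard the at most $t-1$ varieties in $\vrts$ that contain any given component (since containment by $t$ such varieties together with any $s$ points on the component would produce a $K_{s,t}$), and apply the outer induction hypothesis with parameter $e-1$ on each remaining component; the same exponent computation as in \eqref{bigComp} absorbs the result into $\tfrac{\alpha_{1,e}}{2}m^{\frac{(e-1)s}{es-1}+\eps}n^{\frac{e(s-1)}{es-1}}$. For $I_2$, any cell containing $\geq s$ points of $\pts$ is contained in at most $t-1$ varieties of $\vrts$ (else $K_{s,t}$ embeds), and cells with fewer points contribute trivially, so $I_2=O(r(m+n))\leq\alpha_{2,e}(m+n)$. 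For $I_3$, each variety $W\in\vrts$ properly intersects $O(r^{(e-1)/e})$ cells, because $W$ does not contain $V$, so $W\cap V$ has dimension at most $e-1$, and \cite[Theorem A.2]{ST11} bounds the number of cells that this intersection meets. Apply the inner induction hypothesis on each cell with its $m_i\leq m/r$ points and $n_i$ varieties, then sum via H\"older's inequality exactly as in \eqref{inductionHypothesisComp} to obtain an $I_3$-contribution of $O(r^{-\eps})\cdot\alpha_{1,e}m^{\frac{(e-1)s}{es-1}+\eps}n^{\frac{e(s-1)}{es-1}}$, which for $r$ sufficiently large is at most $\tfrac{\alpha_{1,e}}{2}$ times the target.

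The main obstacle is bookkeeping: each recursive call must preserve the hypothesis that no variety in the current $\vrts$ contains the current ambient variety, which I handle by explicitly deleting at each recursion the $\leq t-1$ offenders (charging them to the linear term $\alpha_{2,e}(m+n)$). A lesser subtlety is verifying the exponent arithmetic in the $I_1$ step, but since the algebra in \eqref{bigComp} depends only on the identity $m^{-d_2}n\leq 1$ (i.e., $m^{-s}n\leq 1$ after our substitution), it carries over verbatim. Beyond these, no genuinely new estimates are needed beyond those already developed for Theorem \ref{moreGeneralThm}.
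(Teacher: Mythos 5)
There is a genuine gap, and it sits exactly where the paper departs from the proof of Theorem \ref{moreGeneralThm}. Your argument silently relies on an analogue of \eqref{eq:mElim}, i.e.\ on $m \leq m^{\frac{(e-1)s}{es-1}}n^{\frac{e(s-1)}{es-1}}$ (equivalently $m\leq n^{e}$): you use it when you claim that the $I_1$ bound from the outer induction is absorbed entirely into $\tfrac{\alpha_{1,e}}{2}m^{\frac{(e-1)s}{es-1}+\eps}n^{\frac{e(s-1)}{es-1}}$ (the term $\alpha_{2,e-1}m_0$ from the induction hypothesis cannot be absorbed otherwise), and again when you claim the $I_3$ contribution is only $O(r^{-\eps})\alpha_{1,e}\,m^{\cdots}n^{\cdots}$ (the summed per-cell terms $\alpha_{2,e}\sum_i m_i=\alpha_{2,e}m'$ cannot be absorbed otherwise). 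To justify this you invoke a dual-side bound $O(m^{1-1/e}n+m)$ in the regime $m>n^{e}$, but no such bound is available: the dual shatter-function exponent for the set system $\{N(p)\colon p\in\pts\}$ on the ground set $\vrts$ is governed by the dimension of the parameter space of the constant-degree varieties (via Milnor--Thom), not by $e$, so at best you dispatch a regime like $m>n^{D'}$ with $D'$ much larger than $e$, which does not yield the inequality you need. The paper states this obstruction explicitly (``This allows us to still have \eqref{eq:nElim}, but not \eqref{eq:mElim}'') and resolves it differently: it repartitions the incidences ($I_1$ = incidences on $Z(f)$ where the variety properly intersects every component of $V\cap Z(f)$ through the point, $I_2$ = incidences where the variety contains such a component, $I_3$ = points off $Z(f)$), keeps the $m$-linear terms rather than absorbing them, and splits the budget as $\tfrac{\alpha_{2,e}}{2}m_0+\tfrac{\alpha_{2,e}}{2}m_0+\alpha_{2,e}m'=\alpha_{2,e}m$, absorbing only the $n$-linear terms via \eqref{eq:nElim}. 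Your scheme is repairable along these lines, but as written the bookkeeping does not close.

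Two smaller points. First, your claim that at most $t-1$ varieties contain a given irreducible component of $V\cap Z(f)$ only holds when that component carries at least $s$ points of $\pts$; components with fewer than $s$ points may be contained in many varieties, and their incidences must be charged to an $O(n)$ term, which is exactly the dichotomy (few points or few containing surfaces) used to bound $I_2$ in the paper. Second, your class of ``cells fully contained in a variety'' is vacuous in this setting: the cells are full-dimensional open components of $\RR^d\setminus Z(f)$, and no $W\in\vrts$ can contain one (such a $W$ would be all of $\RR^d$ and hence contain $V$, which is excluded), so that part of your trichotomy does no work here.
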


The proof of Theorem \ref{varietiesInductThm} parallels that of Theorem \ref{moreGeneralThm}. As in the proof of Theorem \ref{moreGeneralThm}, we will induct both on $e$ (the dimension of the variety $V$) and on the quantity $m+n$. As before, we can be very wasteful when we induct on $e$, but we must be more efficient when we induct on $m+n$.

As before, we find an $r$-partitioning polynomial $f$. The main difference in the proofs is that we replace Corollary \ref{app} with the K\H ov\' ari-S\'os-Tur\'an theorem (see, e.g. \cite[Section 4.5]{Mat02}). This allows us to still have \eqref{eq:nElim}, but not \eqref{eq:mElim}. To overcome this difficulty, we first change the way that the incidences are partitioned into three subsets $I_1,I_2,I_3$:
\begin{itemize}[noitemsep]
\item $I_1$ consists of the incidences $(p,S) \in \pts\times\vrts$ such that $p\in Z(f)$ and $S$ properly intersects every irreducible component of $V \cap Z(f)$ that contains $p$.
\item $I_2$ consists of the incidences $(p,S) \in \pts\times\vrts$ such that $p$ is contained in an irreducible component of $V \cap Z(f)$ that is fully contained by $S$.
\item $I_3 = I(P,Q) \setminus \{I_1\cup I_2\}$. This is the set of incidences $(p,S) \in \pts\times\vrts$ such that $p$ is not contained in $V \cap Z(f)$.
\end{itemize}

Let $V^\prime = V \cap Z(f)$ and let $m_0 = |\pts \cap V^\prime|.$ To bound $I_1$, we argue as in the proof of Theorem \ref{moreGeneralThm}. This is an incidence problem on the variety $V^\prime = V\cap Z(f)$, which has dimension at most $e-1$. Arguing as in Theorem \ref{moreGeneralThm}, we obtain the bound
$$
I_1 \leq  C\alpha_{1,e-1}m_0^{\frac{(e-2)s}{(e-1)s-1}+\eps}n^{\frac{(e-1)(s-1)}{(e-1)s-1}}+\alpha_{2,e-1}(m_0+n),
$$
where the constant $C$ depends on $D,d,e,s,t$ and the degree of $f$. A computation analogous to \eqref{bigComp} shows that this is at most
\begin{equation}\label{I1BdVarieties}
\frac{\alpha_{1,e}}{3}m^{\frac{(e-1)s}{es-1}+\eps}n^{\frac{e(s-1)}{es-1}}+\frac{\alpha_{2,e}}{2}m_0,
\end{equation}
provided $\alpha_{1,e}$ and $\alpha_{2,e}$ are chosen sufficiently large.

Let $m^\prime = m-m_0$. The proof bounding $I_3$ proceeds exactly as the proof in Theorem \ref{moreGeneralThm}, and we obtain the bound
\begin{equation}\label{I3BdVarieties}
I_3 \leq \frac{\alpha_{1,e}}{3}m^{\frac{(e-1)s}{es-1}+\eps}n^{\frac{e(s-1)}{es-1}}+\alpha_{2,e}m^\prime,
\end{equation}
which is the analogue of \eqref{inductionHypothesisComp}. Note that \eqref{I3BdVarieties} should also include the term $\alpha_{2,e}n$, but \eqref{eq:nElim} allows us to combine this with the first term in \eqref{I3BdVarieties}.

It remains to bound $I_2$. As in the discussion in Theorem \ref{moreGeneralThm} for bounding the quantity $I_1$, note that $V^\prime$ can be written as a union of $\gamma_1$ irreducible (over $\RR$) components, each of dimension at most $e-1$, where $\gamma_1$ depends only on $D$, $e$, $d$ and the degree of $f$. Since the incidence graph of $\pts\times\vrts$ does not contain a copy of $K_{s,t}$, each of the irreducible components of $V^\prime$ either contains at most $s$ points, or it is contained in at most $t$ surfaces from $\vrts$. The contribution from the first quantity is at most $s \gamma_1 n$, and the contribution from the second quantity is at most $tm_0$. Combining these bounds with \eqref{eq:nElim}, we have

\begin{equation}\label{I2BdVarieties}
I_2 \le \frac{\alpha_{1,e}}{3}m^{\frac{(e-1)s}{es-1}+\eps}n^{\frac{e(s-1)}{es-1}}+\frac{\alpha_{2,e}}{2}m_0,
\end{equation}
provided we select $\alpha_{2,e}\geq 2 t$. Combining \eqref{I1BdVarieties}, \eqref{I3BdVarieties}, and \eqref{I2BdVarieties} gives us \eqref{IBdVarieties}, which establishes Theorem \ref{varietiesInductThm} and in turn Theorem \ref{th:IncRd}.
\end{proof}

\subsection{Unit distances in $\RR^d$}

For a finite set $\pts \subset \RR^d$, we denote \emph{the number of unit distances that are spanned by} $\pts$ as the number of pairs $p,q\in \pts^2$ such that $|p-q|=1$ (where $|p-q|$ denotes the Euclidean distance between $p$ and $q$).
Let $f_d(n)$ denote the maximum number of unit distances that can be spanned by a set of $n$ points in $\RR^d$. The unit distances problem, first posed by Erd\H{o}s in \cite{Erdos1,Erdos2}, asks for the asymptotic behavior of $f_{2}(n)$ and $f_{3}(n)$. Currently, the best known bounds are $f_2(n) = O(n^{4/3})$ \cite{Spencer}, $f_2(n) = n^{1+\Omega(1 / \log\log n)}$ \cite{Erdos1}, $f_3(n) = O(n^{3/2})$ \cite{kaplan, Zahl13}, and $f_3(n)=\Omega(n^{4/3}\log\log n)$ \cite{Erdos2}. For any $d\ge 4$, we have the trivial bound $f_d(n) = \Theta(n^2)$ (see, e.g., \cite{Lenz55}). For example, in $\RR^4$, let $P_1$ be a set of $n/2$ points arranged on the circle $x_1^2+x_2^2=1/2$, and let $P_2$ be a set of $n/2$ points arranged on the circle $x_3^3+x_4^2 = 1/2$. Then, if $P=P_1\cup P_2$, the set $P$ has at least $n^2/4$ unit distances.

The problem in $d\ge 4$ becomes non-trivial once we consider only point sets with some restriction on them. Oberlin and Oberlin \cite{Oberlin12} obtain an improved upper bound under a natural restriction, as follows.

\begin{theorem} \label{th:oberlin}{\bf (\cite{Oberlin12})}
Let $d\ge 2$ and consider an $n$-point set $\pts\subset \RR^d$, such that no $d$-element subset of $\pts$ is contained in a $(d-2)$-flat. Then the number of unit distances that are spanned by $\pts$ is $O(|P|^{(2d-1)/d})$.
\end{theorem}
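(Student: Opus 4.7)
The plan is to encode unit distances as point--sphere incidences in $\RR^d$ and invoke Theorem~\ref{th:IncRd}. For each $p\in\pts$, let $S_p\subset\RR^d$ denote the unit sphere of radius $1$ centered at $p$, and set $\vrts=\{S_p:p\in\pts\}$; these are smooth varieties of constant degree. A pair $\{p,q\}\subset\pts$ realizes a unit distance precisely when $q\in S_p$ (equivalently $p\in S_q$), so the number of unit distances equals $\tfrac12 I(\pts,\vrts)$, and it suffices to bound this incidence count.

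The main content is verifying that, under the given hypothesis, the incidence graph of $\pts\times\vrts$ contains no $K_{d,3}$. Suppose, toward contradiction, that $d$ points $p_1,\dots,p_d\in\pts$ all lie on three distinct unit spheres $S_{q_1},S_{q_2},S_{q_3}$. Each center $q_i$ then lies in the common locus of the equations $|x-p_1|=|x-p_j|$ for $j=2,\dots,d$, i.e., in the intersection of the $d-1$ perpendicular bisector hyperplanes of the pairs $(p_1,p_j)$. By hypothesis $p_1,\dots,p_d$ are not contained in any $(d-2)$-flat, so they affinely span a hyperplane $\Pi\subset\RR^d$; consequently the $d-1$ bisectors have linearly independent normals and meet in a single line $\ell$ perpendicular to $\Pi$, passing through the circumcenter of $\{p_1,\dots,p_d\}$ in $\Pi$. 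Along $\ell$, the common distance from $x$ to every $p_i$ equals $\sqrt{|x-c|^2+R^2}$, where $c$ is the circumcenter and $R$ the circumradius in $\Pi$; this is a strictly convex function of the parameter $x\in\ell$ and attains the value $1$ at most twice. Hence at most two such centers $q_i$ exist, contradicting the assumption of three.

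Applying Theorem~\ref{th:IncRd} with $s=d$, $t=3$, and $|\pts|=|\vrts|=n$, the two exponents collapse to $\tfrac{(d-1)s}{ds-1}=\tfrac{d(s-1)}{ds-1}=\tfrac{d}{d+1}$, yielding
$$I(\pts,\vrts)=O\!\left(n^{\frac{2d}{d+1}+\eps}+n\right).$$
Since $\tfrac{2d}{d+1}<\tfrac{2d-1}{d}$ for every $d\ge 2$, choosing $\eps>0$ sufficiently small makes this at most $O(n^{(2d-1)/d})$, as claimed. The only genuine step in the argument is the $K_{d,3}$-free verification above; after that, the bound follows immediately from Theorem~\ref{th:IncRd}, which in fact yields a strictly stronger bound than the one stated.
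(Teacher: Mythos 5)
Your proof is correct, but note that the paper itself does not prove Theorem \ref{th:oberlin}: it is quoted from Oberlin and Oberlin \cite{Oberlin12} as a benchmark, so there is no in-paper argument to compare against. The closest relatives are Theorems \ref{th:UnitR4} and \ref{th:UnitRd}, and your argument is exactly in their spirit: encode unit distances as point--sphere incidences, verify a forbidden complete bipartite pattern by elementary geometry, and invoke the paper's incidence machinery. Your geometric step differs from the one used for Theorem \ref{th:UnitRd}: there, three centers are used to force the other points onto a common $(d-3)$-dimensional sphere (hence the hypothesis on spheres), whereas you use all $d$ points --- which span a hyperplane by the flat hypothesis --- to pin the admissible centers to the line of intersection of the $d-1$ bisector hyperplanes, and then strict convexity of $t\mapsto\sqrt{t^2+R^2}$ gives at most two unit spheres through the $d$ points; this is precisely the $K_{d,3}$-freeness you need, and the verification is sound. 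Two remarks on efficiency: (i) the stated exponent $(2d-1)/d = 2-1/d$ already follows from $K_{d,3}$-freeness combined with the K\H ov\'ari--S\'os--Tur\'an theorem (or Corollary \ref{app}), so the polynomial-partitioning input of Theorem \ref{th:IncRd} is not actually needed for the bound as stated; (ii) what Theorem \ref{th:IncRd} buys, as you observe, is the stronger bound $O\left(n^{2d/(d+1)+\eps}\right)$, which is consistent with the paper's remark that Theorem \ref{th:UnitRd} improves on Theorem \ref{th:oberlin} under a weaker assumption --- indeed your hypothesis implies that of Theorem \ref{th:UnitRd} with $k=d$, since any $(d-3)$-dimensional sphere is contained in a $(d-2)$-flat.
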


We now improve Theorem \ref{th:oberlin} by applying Theorem \ref{2dim}. First, we show that the configuration described above is essentially the only one that yields $\Theta(n^2)$ unit distances in $\RR^4$. Call two circles $(C_1,C_2)$ ``a pair of orthogonal circles of radius $1/\sqrt{2}$'' if (after a translation and rotation) they are the two circles $x_1^2+x_2^2=1/2,\ x_3^2+x_4^2=1/2$.

\begin{theorem}[\bf Unit distances in $\RR^4$] \label{th:UnitR4}
Let $\pts$ be a set of $n$ points in $\RR^4$, so that for any pair of orthogonal circles of radius $1/\sqrt{2}$, one of the circles contains fewer than $k$ points (for some constant $k$). Then, for any $\eps>0$, the number of unit distances spanned by $\pts$ is $O(n^{8/5+\eps})$.
\end{theorem}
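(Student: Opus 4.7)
The plan is to apply Theorem \ref{2dim} (part \eqref{d1d2dBd} with $d_1=d_2=4$) to the semi-algebraic ``unit-distance'' bipartite graph on $\pts$. Let $G=(\pts,\pts,E)$ with $(p,q)\in E$ iff $|p-q|=1$; the edge relation is cut out by the single degree-$2$ polynomial $\sum_{i=1}^4(p_i-q_i)^2 - 1$, so $G$ is semi-algebraic of description complexity $O(1)$, and, up to a factor of two, $|E(G)|$ equals the number of unit distances in $\pts$. Applying Theorem \ref{2dim} with $m=n=|\pts|$ will give exponent $2\cdot\frac{4}{4+1}+\eps = \frac{8}{5}+\eps$ as soon as we can certify that $G$ is $K_{k',k'}$-free for some constant $k'=k'(k)$.

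The core of the proof is therefore to show this $K_{k',k'}$-freeness under the hypothesis on orthogonal circles. Suppose $A,B\subset\pts$ with $|A|=|B|=k'$ form a $K_{k',k'}$, i.e.\ $|a-b|=1$ for all $a\in A,\ b\in B$. For any three points of $\RR^4$ in general position, the common unit-distance locus is a circle lying in the $2$-flat through their circumcenter and perpendicular to their affine hull; for four points in general position it is at most two points. Since $|B|\ge k'$ is large, every $4$-subset of $A$ must thus be concyclic, and a short rigidity argument (``every $4$ concyclic $\Rightarrow$ every $4$ coplanar $\Rightarrow$ all coplanar, hence all on a common circle'') forces $A$ to lie on a single circle $C_A$ inside a $2$-flat $\pi_A$ of some radius $r_A$; symmetrically $B\subset C_B\subset \pi_B$ of radius $r_B$. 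A direct equidistant calculation then shows $\pi_A$ and $\pi_B$ are orthogonal $2$-flats sharing a common center, and $r_A^2+r_B^2=1$. In the symmetric case $r_A=r_B=1/\sqrt{2}$, the pair $(C_A,C_B)$ is, after an isometry, exactly the standard pair of orthogonal circles of radius $1/\sqrt{2}$, each carrying $k'\ge k$ points of $\pts$, directly contradicting the hypothesis.

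The main obstacle, in my view, is handling the asymmetric case $r_A\neq r_B$, which is not literally covered by the stated hypothesis. I would address this by pigeonholing within the configuration: fixing many $4$-subsets of $A$ and looking at the corresponding orthogonal circles through $B$, one should be able to extract a sub-configuration in which both radii are $1/\sqrt{2}$ (possibly by refining $A$ and $B$ by a constant factor and exploiting that the radii $(r_A,r_B)$ live on the quarter-circle $r_A^2+r_B^2=1$, together with the rigid motion freedom in the definition of ``orthogonal circles of radius $1/\sqrt{2}$''). With $K_{k',k'}$-freeness secured, Theorem \ref{2dim} applied with $d=4$ and $m=n=|\pts|$ delivers $|E(G)| = O(n^{8/5+\eps})$, which is exactly the claimed bound on the number of unit distances spanned by $\pts$.
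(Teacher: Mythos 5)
Your overall route is the paper's: you form the same semi-algebraic unit-distance bipartite graph on two copies of $\pts$, plan to invoke bound \eqref{d1d2dBd} of Theorem \ref{2dim} with $d_1=d_2=4$ and $m=n$, and your analysis of a hypothetical $K_{k',k'}$ arrives at the same configuration the paper derives (the paper gets there by noting that the intersection of the $k$ unit spheres centered at the points of one side contains more than two points and hence must be a circle, and symmetrically for the other side): $A$ lies on a circle $C_A$, $B$ on a circle $C_B$, the two circles concentric, in orthogonal $2$-flats, with $r_A^2+r_B^2=1$.

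The genuine gap is the step you yourself flag, and the pigeonholing you propose cannot close it. The radii are intrinsic to the configuration: once $A$ has three points, $C_A$ is their circumcircle, so $r_A$ (and hence $r_B=\sqrt{1-r_A^2}$) is determined by the point set itself; passing to $4$-subsets, refining $A$ and $B$ by constant factors, or applying rigid motions never changes these radii, so no sub-configuration with $r_A=r_B=1/\sqrt 2$ can be extracted when $r_A\neq r_B$. Worse, the asymmetric case shows that $K_{k',k'}$-freeness genuinely cannot be derived from the hypothesis as literally stated: place $n/2$ points on a circle of radius $3/5$ in the $x_1x_2$-plane centered at the origin and $n/2$ points on the concentric circle of radius $4/5$ in the $x_3x_4$-plane; every point of the first circle is at unit distance from every point of the second, giving $n^2/4$ unit distances, while any circle of radius exactly $1/\sqrt 2$ meets each of these two circles in at most two points (two distinct circles meet in at most two points), so for any $k\geq 5$ no pair of orthogonal circles of radius $1/\sqrt 2$ carries $k$ points on both members. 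What the geometry actually yields is that the obstructions are pairs of concentric circles in orthogonal planes with $r_A^2+r_B^2=1$ (equivalently, pairs of circles with every point of one at unit distance from every point of the other), and the argument closes only if the hypothesis is read as excluding $k$ points on each circle of any such pair. You have in fact isolated exactly the step that the paper's own proof passes over: its concluding appeal to ``elementary geometry'' asserts that both radii must equal $1/\sqrt 2$, which is precisely what the asymmetric configuration contradicts. So the correct repair is to broaden the hypothesis (or the definition of an orthogonal pair), not a pigeonhole argument within the given configuration.
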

\begin{proof}
Consider the bipartite graph whose vertex set consists of two copies of $\pts$, and where an edge $(p,q)$ exists if and only if $|p-q|=1$. This is a semi-algebraic bipartite graph in $(\RR^4,\RR^4)$. If we can also show that this graph contains no copy of $K_{k,k}$, then by Theorem \ref{2dim} the number of edges (i.e., the number of unit distances) is as stated in the theorem.

Without loss of generality, we may assume that $k>2$. We assume, for contradiction, that there exist two collections of points $p_1,\ldots,p_{k} \subset \pts$ and $q_1,\ldots,q_{k}\subset \pts$ such that $|p_i-q_j|=1$ for all indices $1\le i,j \le k$. That is, if $S_i$ denotes the unit sphere centered at $p_i$, then $q_1,\ldots,q_{k} \in \bigcap_{i=1}^{k} S_{i},$ which implies $|\bigcap_{i=1}^{k} S_{i}|>2$. The intersection of at least three unit hyperspheres cannot be a 2-dimensional sphere. Moreover, if such an intersection is zero-dimensional, then it consists of at most two points. Therefore, $\bigcap_{i=1}^{k} S_{i}$ must be a circle. Similarly, if $S'_i$ denotes the unit sphere centered at $q_i$, then $\bigcap_{i=1}^{k} S'_{i}$ must be a circle that contains $p_1,\ldots,p_k$. Elementary geometry then shows that these two circles must be a pair of orthogonal circles of radius $1/\sqrt{2}$, which contradicts the assumption concerning orthogonal circles of radius $1/\sqrt{2}$, and thus completes the proof.
\end{proof}
Notice that Theorem \ref{th:UnitR4} implies a better bound than Theorem \ref{th:oberlin}, while also relying on a weaker assumption. We now present a general bound for any $d$, where we have a similar assumption to the one in Theorem \ref{th:UnitR4}, though with an improved bound.
\begin{theorem}[\bf Unit distances in $\RR^d$] \label{th:UnitRd}
Let $\pts$ be a set of $n$ points in $\RR^d$, so that every $(d-3)$-dimensional sphere contains fewer than $k$ points (for some constant $k$). Then, for any $\eps>0$, the number of unit distances spanned by $\pts$ is $O(n^{2d/(d+1)+\eps})$.
\end{theorem}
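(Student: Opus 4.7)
The plan is to mimic the approach of Theorem \ref{th:UnitR4}: encode unit distances as edges of a semi-algebraic bipartite graph, verify it is $K_{k,k}$-free under the sphere hypothesis, and invoke the bound \eqref{d1d2dBd} of Theorem \ref{2dim}.

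First, I would form the bipartite graph $G = (\pts, \pts, E)$ in $(\RR^d, \RR^d)$ with $(p,q) \in E$ iff $|p - q|^2 = 1$. This is cut out by a single polynomial equation of degree $2$, so $G$ is semi-algebraic of bounded description complexity, and $|E(G)|$ equals twice the number of unit distances in $\pts$. If $G$ is shown to be $K_{k,k}$-free, then applying \eqref{d1d2dBd} with $m = n$ immediately yields the desired bound $O(n^{2d/(d+1)+\eps})$.

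The substance of the proof lies in checking $K_{k,k}$-freeness. We may assume $k \geq 3$. Suppose for contradiction that $p_1,\ldots,p_k$ and $q_1,\ldots,q_k$ in $\pts$ form a $K_{k,k}$; letting $S_i$ denote the unit sphere centered at $p_i$, all of $q_1,\ldots,q_k$ lie in $\bigcap_{i=1}^{k} S_i$. I claim no three of the $p_i$ can be collinear: if $p_1, p_2, p_3$ lay on a line with direction vector $v$ and offsets $t_1 = 0, t_2, t_3$, expanding $|q - p_i|^2 = 1$ forces $\langle q - p_1, v \rangle = t_i/2$ simultaneously for $i=2,3$, contradicting $t_2 \ne t_3$, and so $S_1 \cap S_2 \cap S_3 = \emptyset$. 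Hence $p_1, p_2, p_3$ are non-collinear, and their perpendicular bisector hyperplanes $H_{12}$ and $H_{23}$ meet in a $(d-2)$-flat $F$; then $S_1 \cap F$ is a $(d-3)$-dimensional sphere (or a single point, or empty) containing $S_1 \cap S_2 \cap S_3$, hence containing all $q_j$. This produces a $(d-3)$-sphere containing $k$ points of $\pts$, contradicting the hypothesis.

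Combining the two ingredients completes the proof. The main obstacle is the geometric $K_{k,k}$-freeness lemma, but the only real difficulty is the collinearity degeneracy of the centers $p_i$, which the short computation above dispatches cleanly; after that, Theorem \ref{2dim} does the heavy lifting.
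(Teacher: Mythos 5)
Your proposal is correct and follows essentially the same route as the paper: encode unit distances as a semi-algebraic bipartite graph on two copies of $\pts$, show $K_{k,k}$-freeness by trapping the common neighbors $q_1,\ldots,q_k$ in the intersection of a unit sphere with two perpendicular-bisector hyperplanes, which is a $(d-3)$-dimensional sphere, and then apply bound \eqref{d1d2dBd} of Theorem \ref{2dim}. Your explicit treatment of the collinear-centers degeneracy is a small extra refinement over the paper's argument (which passes over that case implicitly, since parallel bisectors force an empty intersection and hence a contradiction anyway), but the substance is identical.
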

\begin{proof}
As before, we consider the semi-algebraic bipartite graph whose vertex set consists of two copies of $\pts$, and where an edge $(p,q)$ exists if and only if $|p-q|=1$. This time, this graph is in $(\RR^d,\RR^d)$. If we can show that this graph contains no copy of $K_{k,k}$, then Theorem \ref{2dim} would imply that the number of edges (i.e., the number of unit distances) is as stated in the theorem.

Without loss of generality, we may assume that $k>2$. We assume, for contradiction, that there exist two collections of points $p_1,\ldots,p_{3} \subset \pts$ and $q_1,\ldots,q_{k}\subset \pts$ such that $|p_i-q_j|=1$ for all indices $1 \le i \le 3$ and $1\le j \le k$. That is, if $S_i$ denotes the unit hypersphere centered at $p_i$, then $q_1,\ldots,q_{k} \in \bigcap_{i=3}^{k} S_{i}$.
The intersection $S_1 \cap S_2$ is fully contained in the perpendicular bisector $\pi_{12}$ of $p_1$ and $p_2$, and the intersection $S_1 \cap S_3$ is fully contained in the perpendicular bisector $\pi_{13}$ of $p_1$ and $p_3$. Since $\pi_{12} \neq \pi_{13}$, the intersection of these two hyperplanes is a $(d-2)$-dimensional flat, and thus $q_1,\ldots,q_k \subset \pi_{12} \cap \pi_{13} \cap S_1$. This intersection is a $(d-3)$-dimensional sphere, contradicting the assumption of the theorem.
\end{proof}

\subsection{Incidences between points and tubes}
As an immediate corollary to Theorem \ref{2dim}, we establish the following bound on the number of incidences between points and tubes in $\RR^d$ (where a tube is the set of all points of distance at most $\delta$ from a given line). Notice that the set of tubes in $\RR^d$ can be parameterized using $2d-2$ parameters.

\begin{corollary}
For $\delta > 0$, let $\pts$ and $\Sigma$ be a set of $m$ points and $n$ tubes in $\RR^d$ respectively, such that each tube has a radius of $\delta$.  If the incidence graph contains no copy of $K_{k,k}$, then for any $\eps>0$ we have
\[ I(\pts,\Sigma) = O\left(m^{\frac{(2d-2)(d-1)}{d(2d-2)-1}+\eps}n^{\frac{d(2d-3)}{d(2d-2)-1}} + m + n\right). \]
\end{corollary}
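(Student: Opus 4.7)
The plan is to apply Theorem \ref{2dim} directly, with the first ambient space $\RR^{d_1}=\RR^d$ holding the points $\pts$ and the second ambient space $\RR^{d_2}=\RR^{2d-2}$ holding the parameters of the tubes. The key observation, already reminded in the statement of the corollary, is that a tube of fixed radius $\delta$ is determined by its central line and that the set of lines in $\RR^d$ is a $(2d-2)$-dimensional manifold. Concretely, I would parametrize a line by its direction (a point in $\mathbb{P}^{d-1}$, contributing $d-1$ parameters) together with its closest point to the origin in the orthogonal hyperplane ($d-1$ more parameters). Because this parametrization cannot be realized in a single affine chart, I would cover the direction sphere by a bounded number $N_d$ of affine patches; on each patch, the central line of a tube with parameter $q$ has the form $a(q)+t\,v(q)$ with $a,v$ depending polynomially on $q\in\RR^{2d-2}$.

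The next step is to check that the incidence relation is semi-algebraic of bounded description complexity. On each chart, the condition that $p\in\RR^d$ lies in the tube with parameter $q$ is $\mathrm{dist}(p,\ell(q))^2 \le \delta^2$, and multiplying through by $|v(q)|^2$ to clear the denominator gives the single polynomial inequality
\[
|v(q)|^2\,|p-a(q)|^2 \;-\; \langle p-a(q),\,v(q)\rangle^2 \;\le\; \delta^2\,|v(q)|^2,
\]
whose degree is bounded by a constant depending only on $d$. Thus, on each chart, the bipartite incidence graph between $\pts$ and the tubes whose parameter lies in that chart is semi-algebraic in $(\RR^d,\RR^{2d-2})$ with bounded description complexity, and inherits the $K_{k,k}$-free hypothesis from the original incidence graph.

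Now I would simply apply part \eqref{d1d2GenBd} of Theorem \ref{2dim} with $d_1=d$ and $d_2=2d-2$. Substituting these values into the exponents $\frac{d_2(d_1-1)}{d_1d_2-1}$ and $\frac{d_1(d_2-1)}{d_1d_2-1}$ produces $\frac{(2d-2)(d-1)}{d(2d-2)-1}$ and $\frac{d(2d-3)}{d(2d-2)-1}$, respectively, matching the claimed bound. Summing the estimates from the $N_d=N_d(d)$ charts only affects the implicit constant. I do not expect a genuine obstacle here: the entire content of the argument is the observation that tubes of fixed radius admit a semi-algebraic parametrization by a $(2d-2)$-dimensional family, which is why the result is stated as an immediate corollary.
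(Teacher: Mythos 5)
Your proposal is correct and follows the same route as the paper, which treats the bound as an immediate consequence of part \eqref{d1d2GenBd} of Theorem \ref{2dim} with $d_1=d$ and $d_2=2d-2$, using exactly the observation that fixed-radius tubes form a semi-algebraic family with $2d-2$ parameters. Your chart decomposition and explicit denominator-clearing inequality merely spell out details the paper leaves implicit, and they do not change the argument.
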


\noindent In the planar case, we get $O(m^{2/3}n^{2/3}+m+n)$ incidences between points and strips.

\subsection{Incidences with $k$-dimensional families of varieties}
For each integer $D\geq 0$, let $\RR[x_1,\ldots,x_d]_{\leq D}$ be the vector space polynomials of degree at most $D$. As in Section \ref{ssec:Hilbert}, $\RR[x_1,\ldots,x_d]_{\leq D}$ can be identified with $\RR\mathbf{P}^{\binom{d+D}{D}}$ (here, as in Section \ref{ssec:Hilbert}, we identify a polynomial $f$ with all polynomials of the form $cf$ with $c\in\RR\backslash\{0\}$). If $\mathcal{M}\subset\RR\mathbf{P}^{\binom{d+D}{D}}$, we say that the polynomial $f$ is an element of $\mathcal M$ if the equivalence class of $f$ (where, as above $f$ is identified with every polynomial of the form $cf, c\in\RR\backslash\{0\}$) is an element of $\mathcal{M}$.

Recently, Wang, Yang, and Zhang \cite{WYZ13} derived the following result (our formulation of the theorem is somewhat different from the one in \cite{WYZ13}).
\begin{theorem} \label{th:WYZ}
Let $\mathcal{M}\subset\RP^{\binom{D+2}{2}}$ be an algebraic variety of dimension $k$. Let $\pts$ be a set of $m$ points in the plane, and let $\curves\subset\mathcal{M}$ be a set of $n$ plane curves of degree at most $D$, each of which is the zero-set of a polynomial that lies in $\mathcal{M}$. Suppose that no two curves of $\curves$ share a common component.
Then
\[ I(\pts,\curves) = O\left(m^{k/(2k-1)}n^{(2k-2)/(2k-1)}+m+n\right). \]
The implicit constant depends on $k$ and $\mathcal{M}$.
\end{theorem}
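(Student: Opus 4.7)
The plan is to deduce Theorem \ref{th:WYZ} from Theorem \ref{th:IncRd} by projective duality. Identify $\RR[x_1,x_2]_{\le D}$ with $\RP^N$ where $N = \binom{D+2}{2}-1$, and let $\gamma^{\ast} \in \mathcal{M}$ denote the projective point associated with the defining polynomial of a curve $\gamma \in \curves$. For each $p \in \pts$, let $V_p = \{f \in \mathcal{M} : f(p) = 0\}$ be the hyperplane section of $\mathcal{M}$ corresponding to $p$. This is a subvariety of $\mathcal{M}$ of dimension at most $k-1$ and degree at most $\deg \mathcal{M}$, a constant depending only on $\mathcal{M}$. The key feature is that $p \in \gamma$ if and only if $\gamma^{\ast} \in V_p$, so the bipartite incidence graph is preserved by this dualization.

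Next I would verify the $K_{s,t}$-freeness of the dual incidence graph. By Bezout, two plane curves of degree at most $D$ that share no common component intersect in at most $D^2$ points. Dually, any two points $\gamma_1^{\ast}, \gamma_2^{\ast} \in \mathcal{M}$ can simultaneously lie in at most $D^2$ of the $V_p$. Hence, in the convention of Theorem \ref{th:IncRd} (with $s$ on the point side and $t$ on the variety side), the dual incidence graph contains no $K_{2, D^2+1}$.

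Applying Theorem \ref{varietiesInductThm} with ambient variety $V = \mathcal{M}$ of dimension $e = k$ and with $s = 2$, and noting that after dualization the number of points is $|\curves| = n$ and the number of varieties is $|\pts| = m$, yields
\[
I(\pts, \curves) = O\!\left(n^{\frac{2(k-1)}{2k-1}+\varepsilon}\, m^{\frac{k}{2k-1}} + m + n\right),
\]
matching the claimed bound up to the $\varepsilon$ loss inherent in Theorem \ref{th:IncRd}.

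The main technical obstacle is making the dualization rigorous when $\mathcal{M}$ is reducible or sits in projective rather than affine space: one covers $\mathcal{M}$ by a bounded number of affine charts in $\RR^N$ and, within each chart, runs the argument separately on each irreducible component of $\mathcal{M}$ of top dimension $k$. Lower-dimensional components contribute only lower-order terms via the induction on $e$ built into Theorem \ref{varietiesInductThm}. One must also handle the minor degeneracy in which $\mathcal{M} \subset H_p$ for some $p$ (equivalently, all curves in $\mathcal M$ pass through $p$); only finitely many such $p$ can exist and they contribute $O(n)$ incidences that are absorbed in the $m+n$ term. Apart from this bookkeeping, the argument is a direct application of the machinery developed in the preceding sections.
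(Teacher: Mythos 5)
There is a genuine gap: your argument does not prove the statement as written, because the claimed bound has no $\eps$ in the exponent, while every route through the machinery of this paper (Theorem \ref{th:IncRd}, Theorem \ref{varietiesInductThm}, or Theorem \ref{moreGeneralThm}) inherently loses an $\eps$ coming from the induction on $m+n$ in the polynomial-partitioning argument. You acknowledge this yourself ("matching the claimed bound up to the $\varepsilon$ loss"), but that concession is exactly the difference between Theorem \ref{th:WYZ} and what you have proved. In fact, Theorem \ref{th:WYZ} is quoted from Wang, Yang, and Zhang \cite{WYZ13} and is not proved in this paper at all; the paper's own contribution in this direction is Theorem \ref{corOf2dim}, which is precisely the dual-space argument you describe (dualize curves to points of $\mathcal{M}$, dualize points $p$ to the hyperplane sections $V_p$, check $K_{s,t}$-freeness via B\'ezout, and apply the partitioning theorem on the $k$-dimensional variety $\mathcal{M}$), and the paper states explicitly that this extension comes "though we have an $\eps$ loss in the exponents." Removing the $\eps$ in the planar setting requires different techniques (as in \cite{WYZ13}, in the spirit of the cutting-based Pach--Sharir argument), not the Hilbert-polynomial partitioning developed here; your proposal offers no mechanism for eliminating it.

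Two smaller remarks on the parts that do work. The dualization and the $K_{2,D^2+1}$-freeness via B\'ezout are correct and coincide with the paper's proof of Theorem \ref{corOf2dim} (which, note, invokes Theorem \ref{moreGeneralThm} with the $K_{s,s}$ hypothesis rather than Theorem \ref{varietiesInductThm}; either works for the $\eps$-lossy bound). Your handling of the degenerate points $p$ with $\mathcal{M}\subset V_p$ should be phrased slightly more carefully: the set of such $p$ is the common zero locus of all polynomials in $\mathcal{M}$, which is finite only because a one-dimensional common component is ruled out by the hypothesis that no two curves of $\curves$ share a component (assuming $n\ge 2$); once finite, its size is bounded in terms of $D$ and it contributes $O(n)$ incidences, as you say.
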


For example, assume that $\curves$ is a set of curves of degree at most $D$. By
B\'ezout's theorem (e.g., see \cite{CLOu}), two such curves can intersect in at most $D^2$ points, so we can apply Theorem \ref{th:PS} with $k=D^2+1$. However, since a bivariate polynomial of degree $D$ has at most $\binom{D+2}{2}=(D+2)(D+1)/2$ monomials, we obtain an improved bound by applying Theorem \ref{th:WYZ} with $\mathcal{M}=\RP^{\binom{D+2}{2}}$ and $k=(D+2)(D+1)/2$.

Using the techniques developed in this paper, we can extend this result to higher dimensions (though we have an $\eps$ loss in the exponents).
\begin{theorem}\label{corOf2dim}
Fix integers $d$ and $D$. Let $\mathcal{M}\subset\RP^{\binom{D+d}{d}}$ be an algebraic variety of dimension $k$. Let $\pts$ be a set of $m$ points in $\RR^d$, and let $\vrts$ be a set of $n$ algebraic varieties in $\RR^k$, each of which is the zero-set of some polynomial of degree at most $D$ that lies in $\mathcal{M}$. Suppose that the incidence graph contains no copy of $K_{s,s}$ for some constant $s$.
Then
\begin{equation}\label{corDesiredBd}
I(\pts,\vrts) = O\left(m^{\frac{k(d-1)}{(dk-1)}+\eps}n^{\frac{d(k-1)}{(dk-1)}}+m+n\right),
\end{equation}
where the implicit constant depends on $\mathcal{M},D,d,$ and $s$.
\end{theorem}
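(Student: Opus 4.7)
The exponents in \eqref{corDesiredBd} coincide exactly with those of \eqref{d1d2GenBd} under the substitution $d_1=d,\ d_2=k$. So the plan is to recast the problem as a semi-algebraic bipartite incidence problem in which the $Q$-side is ``effectively'' $k$-dimensional rather than $N$-dimensional, where $N:=\binom{D+d}{d}-1$, and to rerun the proof of Theorem \ref{moreGeneralThm} with this substitution.

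First I parameterize: to each $V\in\vrts$ associate the point $\hat V\in\RP^N$ whose homogeneous coordinates are the coefficients of a defining polynomial of degree at most $D$. By hypothesis, $\hat V\in\mathcal M$. The incidence condition $p\in V$ is $\sum_j a_j(\hat V)\,m_j(p)=0$, where the $m_j$'s are the monomials of degree at most $D$ in $x_1,\dots,x_d$; this is a polynomial of bounded degree in the joint variable $(p,\hat V)$. Covering $\RP^N$ by its standard affine charts yields a semi-algebraic bipartite graph $G=(\pts,\hat\vrts,E)$ of bounded description complexity in $(\RR^d,\RR^N)$, with $\hat\vrts$ contained in an irreducible affine variety $\mathcal M_{\mathrm{aff}}\subset\RR^N$ of dimension $k$, and $G$ is $K_{s,s}$-free by hypothesis.

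The only place in the proof of Theorem \ref{moreGeneralThm} where the $Q$-side ambient dimension enters essentially is the base-case Corollary \ref{app}, which provides the bound $|E(G)|\le c'(mn^{1-1/d_2}+n)$. I would replace this with the refined estimate $|E(G)|\le c'(mn^{1-1/k}+n)$, obtained by rerunning Corollary \ref{app}'s proof with the observation that every neighborhood $N(p)=\{q\in\hat\vrts: p\in V_q\}$ is cut out on $\hat\vrts\subset\mathcal M_{\mathrm{aff}}$ by a hyperplane section of $\mathcal M_{\mathrm{aff}}$. Consequently the primal shatter function $\pi_{\mathcal F_1}(z)$ counts cells of an arrangement of $z$ bounded-degree hypersurfaces on a $k$-dimensional variety, which is $O(z^k)$ by a Milnor--Thom type estimate on varieties (e.g.~the work of Barone--Basu). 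Feeding this into Theorem \ref{weak} gives the refined $n^{1-1/k}$ bound. The symmetric base-case estimate $|E(G)|\le c'(nm^{1-1/d}+m)$ needed at the outset of the proof of Theorem \ref{moreGeneralThm} is the standard Corollary \ref{app} applied on the $P$-side and is unaffected.

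With this modified base case, every other step in the proof of Theorem \ref{moreGeneralThm} carries over verbatim: the standard polynomial partitioning (Theorem \ref{th:partition}) is applied on $\pts\subset\RR^d$; each $q\in\hat\vrts$ crosses at most $O(r^{(d-1)/d})$ cells; the ``engulfed cells'' contribution $I_2$ is controlled by the $K_{s,s}$-free hypothesis; and the induction on the dimension $e$ of the variety containing $\pts$ proceeds starting from $e=d$. The exponent arithmetic with $d_1=d$ and $d_2=k$ then yields exactly the desired bound \eqref{corDesiredBd}. The main obstacle is the Milnor--Thom type estimate on $\mathcal M$ underlying the refined shatter-function bound; once it is in place, the remainder of the argument is routine bookkeeping mirroring Sections \ref{polyMethodSection} and \ref{sec:hilbert}.
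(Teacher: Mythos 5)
Your proposal is correct in outline, but it takes a genuinely different route from the paper. The paper dualizes: each variety $S\in\vrts$ is sent to its coefficient vector $f_S$, a point lying on the $k$-dimensional variety $\mathcal{M}$ in coefficient space, and each point $p\in\pts$ is sent to the variety $V_p=\{f\colon f(p)=0\}$; the incidence graphs coincide, and Theorem \ref{moreGeneralThm} is then invoked for this dual problem, so the polynomial partitioning takes place in coefficient space restricted to $\mathcal{M}$ --- exactly the situation that Theorem \ref{th:partitionV} (Hilbert-polynomial partitioning on a variety) was built for. Since that application puts the $\eps$ on the exponent of $n$, the paper finishes with a case split ($n>m^k$ versus $n\le m^k$) to move it onto $m$. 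You instead keep the primal orientation: the points stay in $\RR^d$ and are partitioned by the ordinary Guth--Katz theorem, while the constraint $\hat{\vrts}\subset\mathcal{M}_{\mathrm{aff}}$ is pushed into the $Q$-side exponent by upgrading Corollary \ref{app} from $mn^{1-1/d_2}+n$ to $mn^{1-1/k}+n$ via a bound on realizable sign conditions of $z$ bounded-degree polynomials restricted to a $k$-dimensional bounded-degree variety (Barone--Basu, or the earlier Basu--Pollack--Roy estimates). That is the one genuinely new ingredient your route needs, and you have located it correctly: in the proof of Theorem \ref{moreGeneralThm} the parameter $d_2$ enters the H\"older and induction arithmetic only formally (the geometric inputs --- $O(r)$ cells, crossing number $O(r^{(e-1)/e})$, at most $m/r$ points per cell --- all live on the $P$-side), the substitution $d_2\to k$ keeps the exponent bookkeeping closing with the same $r^{-\eps}$ gain, the inequality $n\le m^{k}$ needed in the analogue of \eqref{bigComp} comes from your refined base case, and the symmetric condition $m\le n^{d}$ is indeed supplied by the unmodified Corollary \ref{app} on the $P$-side. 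The trade-off: your argument must rerun the induction of Theorem \ref{moreGeneralThm} with the modified base case rather than cite the theorem as a black box, but in exchange it dispenses with Theorem \ref{th:partitionV} and the Hilbert-polynomial machinery on the $Q$-side, and it lands the $\eps$ directly on the $m$-exponent, so no final case analysis is required.
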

\begin{proof}
We need to rephrase this problem in a form that can be addressed by Theorem \ref{moreGeneralThm}. The idea is that we will find a variety $V\subset\RR^{\binom{D+d}{d}}$ of dimension $k$, a collection $Q\subset V$ of $n$ points lying on $V$, and a collection $\mathcal Z$ of $m$ bounded-degree algebraic in varieties in $\RR^{\binom{D+d}{d}}$ so that the incidence graph of $Q\times\mathcal Z$ has no $K_{s,s}$, and $I(Q,\mathcal Z) = I(\pts,\vrts)$.

To each variety $S\in\vrts$, we can associate an affine polynomial $f_S\in \mathcal{M}$. Without loss of generality, we can assume that none of these polynomials lies on the hyperplane $\{x_0=0\}$ (we can always apply a linear change of coordinates to guarantee that this is the case). Let $\tilde S\subset\RR[x_1,\ldots,x_d]$ be the dehomogenization of $S$ with respect to the coordinate chart $\{x_0\neq 0\}\subset\RR^{\binom{D+d}{d}}.$

For each $p\in\RR^d$, let $V_p$ be the dehomogenization of the (homogeneous) variety $\{f\in\RR[x_1,\ldots,x_d]_{\leq D}=\RR^{\binom{D+d}{d}}\colon f(p)=0\}$ with respect to the coordinate chart $\{x_0\neq 0\}\subset\RR^{\binom{D+d}{d}}.$ Then $f_S\in V_p$ if and only if $p\in S$. Furthermore, if $Q=\{f_S\colon S\in\vrts\}$ and $\mathcal Z = \{V_p\colon p\in\pts\}$, then $Q\subset\tilde{\mathcal{M}}$, and the incidence graph of $Q\times\mathcal Z$ is the same as the incidence graph of $\vrts\times\pts$. In particular, this implies that the former incidence graph contains no $K_{s,s}$, and $I(Q,\mathcal Z) = I(\pts,\vrts).$

We may now apply Theorem \ref{moreGeneralThm} to conclude that
\begin{equation}\label{incidenceBdWrongEps}
I(\pts,\vrts) =  O\left(m^{\frac{k(d-1)}{(dk-1)}}n^{\frac{d(k-1)}{(dk-1)}+\frac{\eps}{k}}+m+n\right).
\end{equation}
Now, if $n> m^k$ then Theorem \ref{corOf2dim} follows immediately from Corollary \ref{app}. If $n\leq m^k$, then \eqref{incidenceBdWrongEps} implies \eqref{corDesiredBd}. In either case, Theorem \ref{corOf2dim} is proved.
\end{proof}

\subsection{A variant for semi-algebraic hypergraphs}

As our final application, we briefly mention that one can also obtain a version of Theorem \ref{2dim} for $r$-regular $r$-partite semi-algebraic hypergraphs. That is, let $G = (P_1,\ldots,P_r,E)$ be a semi-algebraic $r$-partite graph in $(\mathbb{R}^{d_1},\RR^{d_2},\ldots,\RR^{d_r})$, and set $d=\sum_i d_i$. Then there are polynomials $f_1,f_2,\ldots,f_{t} \in \mathbb{R}[x_1,\ldots,x_{d}]$ and a Boolean function $\Phi(X_1,X_2,\ldots,X_{d})$ such that for $(p_1,\ldots,p_r) \in P_1\times \cdots \times P_r \subset \mathbb{R}^{d}$,

\begin{equation*}
(p_1,\ldots,p_r) \in E \hspace{.5cm}\Leftrightarrow \hspace{.5cm}\Phi\big(f_1(p_1,\ldots,p_r) \geq 0,\ldots,f_t(p_1,\ldots,p_r) \geq 0\big) = 1.
\end{equation*}

To bound the number of edges of such a graph, we partition the set $\{ 1,\ldots,r \}$ into two disjoint subsets $S_1,S_2$, and then set $P=\prod_{i\in S_1} P_i$ and $Q=\prod_{j\in S_2} P_j$ (i.e., Cartesian products of the point sets). We can then apply Theorem \ref{2dim} on the bipartite semi-algebraic graph of $P$ and $Q$.

In Theorem \ref{2dim}, we needed to assume that the graph was $K_{k,k}$ free. Here, the analogous requirement is that the graph $G$ be $K_{k,\ldots,k}$-free for some constant $k$.

\begin{corollary}
Let $G = (P_1,\ldots,P_r,E)$ be an $r$-regular $r$-partite semi-algebraic hypergraph in $(\mathbb{R}^{d_1},\RR^{d_2},\ldots,\RR^{d_r})$, such that $E$ has description complexity at most $t$.

For any subset $S\subset \{1,2,\ldots,r \}$, we set $m=m(S) = \prod_{i\in S} |\pts_i|$, $n=n(S) = \prod_{i\notin S} |\pts_i|$, $d_{1}=d_1(S) = |S|$, and $d_{2}=d_2(S) = r-|S|$.
If $G$ is $K_{k,\ldots,k}$-free, then
\begin{align}
&|E(G)| \leq \min_{S\subset \{1,\ldots,r\} \atop |S| \neq 0,r } \left\{c_3 \left(m^{\frac{d_{2}(d_{1}-1)}{d_{1}d_{2}-1}+\eps}n^{\frac{d_{1}(d_{2}-1)}{d_{1}d_{2}-1}} + m + n\right)\right\}.\label{d1d2GenBd}
\end{align}

\noindent Note that in the expression above, the quantities $m,n,d_1,$ and $d_2$ depend on $S$, but we have suppressed that dependence to improve readability. Here, $\eps$ is an arbitrarily small constant and $c_1 = c_1(t,k), c_2 = c_2(d_1,\ldots,d_r,t,k,\eps)$, and $c_3 = c_3(d_1,\ldots,d_r,t,k,\eps)$.
\end{corollary}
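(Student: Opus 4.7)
The plan is to reduce the $r$-uniform hypergraph problem to the bipartite setting of Theorem \ref{2dim} by collapsing the $r$ coordinates into two blocks via a Cartesian product. Fix any nontrivial partition $\{1,\ldots,r\} = S \sqcup \bar S$; set $D_1 = \sum_{i\in S} d_i$ and $D_2 = \sum_{j\in\bar S} d_j$; and form the product point sets $P = \prod_{i\in S} P_i \subset \RR^{D_1}$ with $|P|=m$, and $Q = \prod_{j\in\bar S} P_j \subset \RR^{D_2}$ with $|Q|=n$. Define a bipartite graph $G' = (P,Q,E')$ by declaring $(\vec p,\vec q) \in E'$ exactly when the concatenated $r$-tuple lies in $E(G)$. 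The same defining polynomials $f_1,\ldots,f_t$, with their variables regrouped into the two coordinate blocks, witness that $G'$ is semi-algebraic in $(\RR^{D_1},\RR^{D_2})$ of description complexity at most $t$, and the natural map is a bijection $E(G)\leftrightarrow E(G')$, so $|E(G)| = |E(G')|$.

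Next, I would verify that $G'$ is $K_{k',k'}$-free for a constant $k' = k'(k,r,t)$: should $G'$ contain a $K_{k',k'}$ with $k'$ sufficiently large, an iterated pigeonhole/Ramsey-type argument across the $|S|$ coordinates on the $P$ side and the $|\bar S|$ coordinates on the $Q$ side would produce $k$ values in each $P_i$ whose concatenations give a $K_{k,\ldots,k}$ in $G$, contradicting the hypothesis. Granted this, Theorem \ref{2dim} applied to $G'$ yields
\[
|E(G)| \;=\; |E(G')| \;\leq\; c_3\!\left( m^{\frac{D_2(D_1-1)}{D_1 D_2 - 1} + \eps} \, n^{\frac{D_1(D_2-1)}{D_1 D_2 - 1}} + m + n \right),
\]
and minimising over all nontrivial partitions $S$ gives the stated bound (the exponents in the corollary correspond to taking $D_1 = |S|$, $D_2 = r-|S|$, e.g.\ when each $d_i = 1$).

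The main obstacle will be the Ramsey-type step. A purely combinatorial iterated pigeonhole is \emph{not} enough on its own: a collection of $k'$ distinct Cartesian tuples in $\prod_{i\in S} P_i$ need not contain a combinatorial $k\times\cdots\times k$ sub-box (consider the ``diagonal'' $\{(a,\ldots,a):a\in[M]\}$), so $K_{k,\ldots,k}$-freeness of $G$ does not automatically give $K_{k',k'}$-freeness of $G'$ for a universal constant $k'$. To pin down the right $k'$ and force the desired product structure inside a large complete bipartite subgraph of $G'$, one must exploit the semi-algebraic nature of $E$---for instance, via a semi-algebraic Ramsey or grid-embedding result calibrated to $k$, $r$, $t$, and the ambient dimensions. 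Once a suitable $k'$ is in hand, everything else (checking the semi-algebraic complexity of $G'$, applying Theorem \ref{2dim}, minimising over $S$) is routine book-keeping.
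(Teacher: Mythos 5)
Your reduction---fixing a nontrivial bipartition $S$, passing to the Cartesian products $P=\prod_{i\in S}P_i$ and $Q=\prod_{j\notin S}P_j$, and applying Theorem \ref{2dim} to the resulting bipartite semi-algebraic graph---is exactly what the paper does; the paper's argument consists of this observation and nothing more, and in particular it contains no Ramsey-type step: it simply declares that the analogous hypothesis is $K_{k,\ldots,k}$-freeness of $G$. So the obstacle you single out is precisely the point on which the paper is silent, and your suspicion that a purely combinatorial pigeonhole cannot supply it is correct. However, the remedy you hope for cannot exist: there is no constant $k'=k'(k,r,t)$ such that $K_{k,\ldots,k}$-freeness of $G$ forces $K_{k',k'}$-freeness of the product graph $G'$, even for semi-algebraic relations of description complexity $2$. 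Take $r=3$, $P_1=P_2=\{1,\ldots,M\}\subset\RR$, $P_3=\{1,\ldots,N\}\subset\RR$, and $E=\{(x,y,z)\colon x=y\}$. This hypergraph is $K_{2,2,2}$-free, yet for $S=\{1,2\}$ the bipartite graph $G'$ contains the complete bipartite graph between the $M$ diagonal pairs $(a,a)$ and all of $P_3$, so no semi-algebraic Ramsey or grid-embedding lemma can produce a bounded $k'$. Worse, the example shows the difficulty is not merely one of proof technique: here $|E(G)|=MN$, while the claimed bound for $S=\{1,2\}$ (with $d_1(S)=2$, $d_2(S)=1$) is $O\left(M^{2+2\eps}+M^2+N\right)$, which is exceeded as soon as $N$ is a sufficiently large power of $M$; so under hypergraph $K_{k,\ldots,k}$-freeness alone the displayed bound cannot hold for every nontrivial $S$, and hence not for the minimum over all $S$.

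The upshot is that your plan coincides with the paper's, but the transfer step you flag is a genuine gap that cannot be closed as stated: the application of Theorem \ref{2dim} for a given bipartition $S$ is legitimate exactly when the product bipartite graph $(P,Q)$ is itself $K_{k,k}$-free, which is the hypothesis the paper's one-line argument implicitly uses (equivalently, the minimum should be restricted to such bipartitions, or that hypothesis imposed outright). With that reading, the remainder of your write-up---the description complexity of $G'$, the bijection of edge sets, and the bookkeeping of exponents---is fine; note also that what Theorem \ref{2dim} actually yields are exponents in the ambient product dimensions $D_1=\sum_{i\in S}d_i$ and $D_2=\sum_{j\notin S}d_j$, as in your display, which matches the corollary's $|S|$ and $r-|S|$ only when every $d_i=1$, a discrepancy you correctly noted.
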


\section{Discussion} \label{sec:disc}

The main open question that arises from this work seems to be whether Theorem \ref{2dim} is tight. The only lower bounds that we are aware of arise from incidence problems with algebraic objects (mainly point-hyperplane incidence problems). These lead to a tight bound only for the case where $d_1=d_2=2$. In any other case, it would be rather interesting to find configurations of semi-algebraic objects that yield an asymptotically larger number of incidences. It is also possible that more sophisticated configurations of algebraic objects (possibly even hyperplanes) suffice to obtain tight bounds for other values of $d_1$ and $d_2$.
However, for some incidence problems (such as point-circle incidences in $\RR^2$, which corresponds to $d_1=2$ and $d_2=3$) better bounds are known than the ones implied by Theorem \ref{2dim}. This might hint that Theorem \ref{2dim} is not tight for various values of $d_1$ and $d_2$.

On the other hand, Theorem \ref{weak} is tight for $m=n$, as implied by the constructions from Koll\'ar, R\'onyai, and Szab\'o \cite{KRS96} and Alon, R\'onyai, Szab\'o \cite{ARS99}. These works prove, for each fixed $d$, the existence of bipartite graphs with both parts of size $n$,
$\Omega(n^{2-1/d})$ edges, and no copy of $K_{d,t}$ for $t=(d-1)!+1$. It is not hard to verify that such graphs satisfy $\pi_{\mathcal{F}}(z) \leq cz^d$ for all $z$. Indeed, for each $d$-set, there are at most $t-1$ vertices whose neighborhood is a superset of that $d$-set, and each
vertex with $D$ neighbors in a set of size $z$ gives rise to $\binom{D}{d}$ subsets of
size $d$. This implies that if $G$ is $K_{d,t}$-free, then $\pi_{\mathcal{F}}(z)$ is at most
$(t-1)\binom{z}{d}+\binom{z}{d-1}+\ldots+\binom{z}{0} \leq cz^d$ for an
appropriate choice of $c=c(d,t)$. A more precise bound, showing that the worst
case is given by considering the projections being sets of size at most $d$,
with an additional $(t-2)\binom{z}{d}/(d+1)$ projections of size $d+1$, gives
an upper bound of $\pi_{\mathcal{F}}(z) \leq \frac{d+t-1}{d+1}\binom{z}{d}+\binom{z}{d-1}+\binom{z}{d-2}+\ldots+\binom{z}{0}$.

\end{document}